\pgfplotsset{compat=1.14}
\definecolor{ffffff}{rgb}{1,1,1}
\definecolor{ffff00}{rgb}{1,1,0}
\theoremstyle{definition}
\newtheorem{theorem}{Theorem}[section]
\newtheorem{thm}[theorem]{Theorem}
\newtheorem{prop}[theorem]{Proposition}
\newtheorem{defn}[theorem]{Definition}
\newtheorem{cor}[theorem]{Corollary}
\newtheorem{prop-def}{Proposition-Definition}[section]
\newtheorem{rema}[theorem]{Remark}
\newtheorem{nota}[theorem]{Notation}
\newcommand{\N}{{\mathbb N}}
\newcommand{\C}{{\mathbb C}}
\newcommand{\Z}{{\mathbb Z}}
\newcommand{\h}{{\mathfrak h}}
\newcommand{\End}{\textrm{End}}
\newcommand{\one}{\mathbf{1}}
\renewcommand{\d}{\mathbf{d}}
\newcommand{\wt}{\mbox{\rm wt}\ }
\newcommand{\Sym}{\text{Sym}}
\newcommand\reallywidehat[1]{%
\savestack{\tmpbox}{\stretchto{%
  \scaleto{%
    \scalerel*[\widthof{\ensuremath{#1}}]{\kern-.6pt\bigwedge\kern-.6pt}%
    {\rule[-\textheight/2]{1ex}{\textheight}}%WIDTH-LIMITED BIG WEDGE
  }{\textheight}% 
}{0.5ex}}%
\stackon[1pt]{#1}{\tmpbox}%
}
\begin{document}

\setlength{\oddsidemargin}{0cm} \setlength{\evensidemargin}{0cm}
\baselineskip=18pt

\title[Fermionic construction of the $\frac \Z 2$-graded MOSVA and its $\Z_2$-twisted module, I ]{Fermionic construction of the $\frac \Z 2$-graded meromorphic open-string vertex algebra and its $\Z_2$-twisted module, I}
\author{Francesco Fiodalisi, Fei Qi}

\begin{abstract}
We define the $\frac \Z 2$-graded meromorphic open-string vertex algebra that is an appropriate noncommutative generalization of the vertex operator superalgebra. We also illustrate an example that can be viewed as a noncommutative generalization of the free fermion vertex operator superalgebra. The example is bulit upon a universal half-integer-graded non-anti-commutative Fock space where a creation operator and an annihilation operator satisfy the fermionic anti-commutativity relation, while no relations exist among the creation operators. The former feature allows us to define the normal ordering, while the latter feature allows us to describe interactions among the fermions. With respect to the normal ordering, Wick's theorem holds and leads to a proof of weak associativity and a closed formula of correlation functions. 
\end{abstract}
\maketitle

\section{Introduction}

Vertex (operator) algebras are algebraic structures formed by (meromorphic) vertex operators. In mathematics, they arose naturally in the study of representations of infinite-dimensional Lie algebras and the Monster group (see \cite{FLM} and \cite{B}). In physics, they arose in the study of two-dimensional conformal field theory (see \cite{BPZ} and \cite{MS}). Vertex (operator) superalgebras are similar structures formed by (meromorphic) super vertex operators with parity. They appeared along with vertex operator algebras (see \cite{B}, \cite{KW}, \cite{T}). In mathematics, they arose in the study of spinor representations of affine Lie algebras of type $D$ and $E$ (see \cite{FFR}). In physics, they arose in the study of superconformal field theory (see \cite{NS}, \cite{DPZ}). It should be noted that these examples of vertex operator superalgebras are all ``free field realizations'' in the sense that they are built upon a Fock space with free fermions that do not interact with each other. The resulting vertex operator satisfies both super-commutativity and associativity. 

Meromorphic open-string vertex algebras (MOSVA hereafter) are algebraic structures formed by meromorphic vertex operators that satisfy associativity but do not satisfy commutativity of any kind. It was introduced by Huang in 2012 (see \cite{H-MOSVA}). Physically, associativity can be viewed as a strong form of the operator product expansion (OPE hereafter) of meromorphic fields, which is expected to hold for all quantum field theories. Commutativity, on the other hand, is not expected to hold in general quantum field theories, e.g., in a quantum two-dimensional nonlinear $\sigma$-model with a nonflat target manifold. 

Huang also constructed the MOSVA over a Riemannian manifold (see \cite{H-MOSVA-Riemann}) using parallel tensors and natural modules generated by eigenfunctions of the Laplacian operator. In physics, the eigenfunctions correspond to the quantum states of a particle, which can be viewed as a degenerated form of a string. Elements of the MOSVA modules generated by eigenfunctions can be viewed as suitable string-theoretic excitations of the particle states. It is Huang's idea that the MOSVAs constructed from Riemannian manifolds, together with modules generated by Laplacian eigenfunctions and the still-to-be-defined intertwining operators among these modules may lead to a mathematical construction of the quantum two-dimensional nonlinear $\sigma$-model. 

Huang's idea is motivated by the case when the Riemannian manifold is a torus with zero sectional curvature. In this case, the MOSVA is precisely the usual Heisenberg VOA. The direct sum of eigenfunction modules and the intertwining operators among them form the lattice intertwining operator algebra. In particular, if we only consider the eigenfunctions and the intertwining operators with meromorphic correlation functions, we obtain the lattice VOA that played a key role in the construction of the Moonshine module for the Monster group. The corresponding 2d nonlinear $\sigma$-model is precisely a conformal field theory given by the lattice intertwining operator algebra. 

As the first step of carrying out Huang's program, in \cite{Q-2d-space-form}, the second author computed the structure of MOSVA and its eigenfunction modules over any 2-dimensional space form, i.e., a 2-dimensional complete Riemannian manifold with constant sectional curvature. Most results in \cite{Q-2d-space-form} are expected to hold in higher-dimensional space forms. However, the proof is obstructed by a geometric and combinatorial conjecture regarding the zero-mode actions. See \cite{Q-Cov} for more details. 

It should be noted that the MOSVAs and modules considered in \cite{H-MOSVA}, \cite{H-MOSVA-Riemann}, and \cite{Q-2d-space-form} are all bosonic constructions. Naturally, one would expect a fermionic construction and similar geometric constructions. The fermionic construction is expected to appear in a quantum 2d nonlinear $\sigma$-model with Calabi-Yau target manifolds. 

This paper finishes the first step of fermionic construction. We found that a $\frac \Z 2$-graded MOSVA is an appropriate generalization of vertex operator superalgebras. We also construct an example of such algebraic structure that should be viewed as non-anti-commutative generalizations of the structures discussed in \cite{T} and \cite{FFR}, as well as a Bosonic version of MOSVA in \cite{H-MOSVA}. 

In more detail, the structure is built upon a non-anti-commutative Fock space where the creation operators do not have any relations. Physically, the fermions in the state space can have interactions. Depending on the context, one may require various types of interactions, which leads to various quotients of the Fock space discussed in this paper. To this sense, the Fock space is ``universal.'' We still name the construction ``fermionic'' because we still impose the anti-commutativity relation between a creation operator and an annihilation operator. Such a relation allows us to define the fermionic normal ordering, though interchanging two creation operators is not allowed. We believe that this universal non-anti-commutative Fock space is useful when modeling interacting fermions in the future, which amounts to imposing relations among creation operators and shall result in a certain quotient. 

The verification of the axioms of the $\frac \Z 2$-graded MOSVA essentially depends on an explicit formula of the products of two normal-ordered products (see Theorem \ref{Product-thm}). The formula can be viewed as a generalization of Wick's theorem that expresses the products of normal-ordered products in terms of other normal-ordered products. It should be noted that though the same formula holds implicitly in the vertex operator superalgebra constructed in \cite{FFR} and in \cite{T}, since the state space is larger, the correlation functions shall not be identical. 

One of the lessons learned from \cite{Q-2d-space-form} is that the MOSVA itself might carry little geometric and physical information. For that purpose, it is still necessary to construct the canonically $\Z_2$-twisted modules with respect to the parity automorphism. From considerations on the length and notational convenience, we will leave the discussion of canonically $\Z_2$-twisted module in \cite{Q-Fermion-2}, namely Part II of the paper. We shall, however, include in this part the discussions of the $\exp(\Delta(x))$-operator to be used in Part II. 

The paper is organized as follows: Section 2 discusses the basic properties of $\frac \Z 2$-graded MOSVA. Section 3 discusses the non-anti-commutative Fock space, the fermionic normal-ordering, and the definition of vertex operators. We check all the axioms except for the weak associativity. Section 4 proves a generalization of Wick's theorem, which leads to the proof of weak associativity. Section 5 discusses the operator $\Delta(x)$ and its exponential to prepare for Part II. 

\noindent \textbf{Acknowledgements. }The authors thank Yi-Zhi Huang for suggesting the problem and for his long term support. 

% \begin{itemize}
%     \item Vertex algebras and vertex superalgebras. 
%     \item MOSVAs, Bosonic construction and geometric examples. 
%     \item Motivation of fermionic construction and future geometric examples (this part should be consulted with Huang). 
% \end{itemize}

\section{$\frac \Z 2$-graded MOSVA}

\begin{defn}\label{DefMOSVA}
{\rm A {\it $\frac \Z 2$-graded meromorphic open-string vertex algebra} (hereafter MOSVA) is a $\frac \Z 2$-graded vector space 
$V=\coprod\limits_{n\in\frac \Z 2} V_{(n)}$ (graded by {\it weights}) equipped with a {\it vertex operator map}
\begin{eqnarray*}
   Y_V:  V\otimes V &\to & V[[x,x^{-1}]]\\
	u\otimes v &\mapsto& Y_V(u,x)v,
  \end{eqnarray*}
and a {\it vacuum} $\one\in V$, satisfying the following axioms:
\begin{enumerate}
\item Axioms for the grading:
\begin{enumerate}
\item {\it Lower bound condition}: When $n$ is sufficiently negative,
$V_{(n)}=0$.
\item {\it $\d$-commutator formula}: Let $\d_{V}: V\to V$
be defined by $\d_{V}v=nv$ for $v\in V_{(n)}$. Then for every $v\in V$
$$[\d_{V}, Y_{V}(v, x)]=x\frac{d}{dx}Y_{V}(v, x)+Y_{V}(\d_{V}v, x).$$
\end{enumerate}

\item Axioms for the vacuum: 
\begin{enumerate}
\item {\it Identity property}: Let $1_{V}$ be the identity operator on $V$. Then
$Y_{V}(\mathbf{1}, x)=1_{V}$. 
\item {\it Creation property}: For $u\in V$, $Y_{V}(u, x)\mathbf{1}\in V[[x]]$ and 
$\lim_{x\to 0}Y_{V}(u, x)\mathbf{1}=u$.
\end{enumerate}

\item {\it $D$-derivative property and $D$-commutator formula}:
Let $D_V: V\to V$ be the operator
given by
$$D_{V}v=\lim_{x\to 0}\frac{d}{dx}Y_{V}(v, x)\one$$
for $v\in V$. Then for $v\in V$,
$$\frac{d}{dx}Y_{V}(v, x)=Y_{V}(D_{V}v, x)=[D_{V}, Y_{V}(v, x)].$$

\item {\it Weak associativity with pole-order condition}: For every $u_1, v\in V$, there exists $p\in \mathbb{N}$ such that for every $u_2\in V$, 
$$(x_0+x_2)^p Y_V(u_1, x_0+x_2)Y_V(u_2, x_2)v = (x_0+x_2)^p Y_V(Y_V(u_1, x_0)u_2, x_2)v.$$
\end{enumerate}  
}
%In this paper we will simply denote the MOSVA by $V$ when there is no confusion.
\end{defn}

\begin{rema}
A grading-restricted vertex superalgebra in the sense of \cite{H-Two-Constructions} satisfies all the properties above. So does a vertex operator superalgebra in the sense of \cite{KW}. Thus, the notion of $\frac \Z 2$-graded meromorphic open-string vertex algebra is an appropriate noncommutative generalization of the notion of vertex superalgebras. We may use the notation 
$$V_0 = \coprod\limits_{n\in \Z} V_{(n)}, V_1 = \coprod\limits_{n\in \Z+\frac 1 2} V_{(n)}$$
and call $V_0$ the \textit{even} part, $V_1$ the \textit{odd} part of the MOSVA. Clearly, the map
$$\theta: V \to V, \theta(v)=(-1)^i v, v\in V_i, i=1, 2$$
is an involution of the MOSVA, i.e., 
$$\theta(Y_V(u,x)v)=Y_V(\theta(u), x)\theta(v), \theta^2(v) = v, u, v\in V. $$
We call $\theta$ the \textit{parity involution}. 
\end{rema}

\begin{rema}
    Theorem 3.4 in \cite{Q-Mod} can be generalized verbatim to a $\frac \Z 2$-graded MOSVA, showing that the weak associativity assumed in Definition \ref{DefMOSVA} is sufficient to guarantee the convergence of products of any number of vertex operators, i.e., for every $v'\in V' = \coprod\limits_{n\in \frac \Z 2} V_{(n)}^*, u_1, ..., u_r, v\in V$, the complex series
    $$\langle v', Y_V(u_1, z_1)\cdots Y_V(u_r, z_r)v\rangle$$
    converges absolutely in the region 
    $$|z_1|>\cdots > |z_r|>0$$
    to a rational function with the only possible poles at $z_i = 0$ $(i = 1, ..., r)$ and at $z_i = z_j$ $(1\leq i < j \leq r)$. Same applies to Theorem 4.10 in \cite{Q-Mod} that conclude the convergence of iterates of any number of vertex operators, i.e., for every $v'\in V' = \coprod\limits_{n\in \frac \Z 2} V_{(n)}^*, u_1, ..., u_r, v\in V$, the complex series
    $$\langle v', Y_V(Y_V(\cdots Y_V(Y_V(u_1, z_1-z_2)u_2, z_2-z_3)u_3, \cdots)u_r, z_r)v\rangle$$
    converges absolutely in the region 
    \begin{equation*}
        \left\{(z_1, ..., z_n)\in \C^n: \begin{aligned}
        &|z_n|>|z_i-z_n|>0, i = 1, ..., n; \\
        &|z_i-z_{i+1}|>|z_j-z_i|>0, 1\leq j < i \leq n-1 \end{aligned}\right\}
    \end{equation*}
    to the same rational function the product converges to. The convergence of mixed products and iterates can be similarly proved.  
\end{rema}

\section{Fermionic construction of vertex operators}

% \textcolor{red}{Write a brief introduction on the Bosonic construction.\\
% Noncommutative Fock space of Neveu-Schwarz model of dual pions in Tsukada's paper.}

\subsection{The algebra $N(\hat{h}_{\Z+1/2})$ and its induced modules $V$}

Let $\h = \C^{2M}$ be a vector space with a nondegenerate symmetric bilinear form $(\cdot, \cdot)$, such that 
$$\h = \mathfrak{p}\oplus \mathfrak{q}$$ 
is a polarization into maximal isotropic subspaces $\mathfrak{p}, \mathfrak{q} \simeq \C^{M}$. %Let $e_1^+, ..., e_l^+$ be a basis of $\h^+$, $e_1^-, ..., e_l^-$ be a basis of $\h^-$ such that 
%$$(e_i^+, e_j^+) = (e_i^-, e_j^-) = 0, (e_i^+, e_j^-) = \delta_{ij}, i, j = 1, ..., l. $$
We consider the half-integral affinization 
$$\hat{\h}_{\Z+1/2} = \h \otimes t^{1/2} \C[t, t^{-1}] \oplus \C \mathbf{k}$$
As a vector space, $\hat{\h}_{\Z+1/2} = \hat{\h}_{\Z+1/2}^+ \oplus \hat{\h}_{\Z+1/2}^- \oplus \hat{\h}_{\Z+1/2}^0$, where 
$$\hat{\h}_{\Z+1/2}^\pm = t^{\pm 1/2}\C[t^{\pm 1}], \hat{\h}_{\Z+1/2}^0 = \C \mathbf{k}$$
Let 
$$T(\hat{\h}_{\Z+1/2}) = \C \oplus  \hat{\h}_{\Z+1/2} \oplus (\hat{\h}_{\Z+1/2})^{\otimes 2} \oplus  \cdots. $$
be the tensor algebra of $\hat{\h}_{\Z+1/2}$. Let $N(\hat{\h}_{\Z+1/2})$ be the quotient of $T(\hat{\h}_{\Z+1/2})$ by the two-sided ideal $J$ generated by 
\begin{eqnarray}
&(a\otimes t^{m+\frac 1 2})\otimes (b\otimes t^{n+\frac 1 2})
+ (b\otimes t^{n+\frac 1 2})\otimes (a\otimes t^{m+ \frac 1 2})\label{ideal-1}
-m(a, b)\delta_{m+n+1, 0}\mathbf{k},&\\
&(a\otimes t^{k+\frac 1 2})\otimes \mathbf{k}-\mathbf{k}\otimes (a\otimes t^{k+\frac 1 2}), & \label{ideal-2}
\end{eqnarray}
for $a, b\in \mathfrak{h}$, $m\in \Z_{+}$, $n\in -\Z_{+}$, $k\in \Z$. 

\begin{rema}
It should be emphasized here that we do not have any relations between $a\otimes t^{m+1/2}$ and $b\otimes t^{n+1/2}$ if $m+1/2,n+1/2$ are both positive or both negative. 
\end{rema}

\begin{prop}\label{PBW}
    As a vector space, $N(\hat{\h}_{\Z+1/2})$ is isomorphic to $T(\hat{\h}_{\Z+1/2}^-)\otimes T(\hat{\h}_{\Z+1/2}^+) \otimes T(\C\mathbf{k})$
    where $T(\hat{\h}_{\Z+1/2}^-)$, $T(\hat{\h}_{\Z+1/2}^+)$, $T(\C\mathbf{k})$ are tensor algebras of $\hat{\h}_{\Z+1/2}^-$, $\hat{\h}_{\Z+1/2}^+$, $\C\mathbf{k}$, respectively. 
\end{prop}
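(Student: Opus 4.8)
The plan is to prove the statement as a Poincaré--Birkhoff--Witt-type result: the multiplication map
$$\mu: T(\hat{\h}_{\Z+1/2}^-)\otimes T(\hat{\h}_{\Z+1/2}^+)\otimes T(\C\mathbf{k}) \to N(\hat{\h}_{\Z+1/2})$$
is a linear isomorphism. Two things must be checked: that $\mu$ is surjective (every monomial can be brought into the normal-ordered form ``negative modes, then positive modes, then powers of $\mathbf{k}$'') and that it is injective (these normal-ordered monomials are linearly independent modulo the ideal $J$). Surjectivity is the routine direction: using relation (\ref{ideal-1}) one rewrites any adjacent pair (positive mode)(negative mode) as $-$(negative mode)(positive mode) plus a central term, and using (\ref{ideal-2}) one pushes every $\mathbf{k}$ to the right; iterating lowers a suitable complexity measure and terminates in a normal-ordered expression. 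Injectivity is the substance of the proposition.

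For injectivity I would invoke Bergman's Diamond Lemma. First fix a basis $\{e_i\}_{i=1}^{2M}$ of $\h$, so that $\{e_i\otimes t^{r}: r\in\Z+\tfrac12\}\cup\{\mathbf{k}\}$ is a basis of $\hat{\h}_{\Z+1/2}$ and $T(\hat{\h}_{\Z+1/2})$ is the free associative algebra on this alphabet. Orient the defining relations into a reduction system
$$(e_i\otimes t^{r})(e_j\otimes t^{s})\ \longmapsto\ -(e_j\otimes t^{s})(e_i\otimes t^{r})+\delta_{s,-r}\bigl(r-\tfrac12\bigr)(e_i,e_j)\,\mathbf{k}\qquad (r>0>s),$$
$$\mathbf{k}\,(e_i\otimes t^{r})\ \longmapsto\ (e_i\otimes t^{r})\,\mathbf{k}\qquad (r\in\Z+\tfrac12).$$
To apply the lemma I need a well-founded monomial order under which both reductions strictly decrease. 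I would well-order the alphabet so that every negative mode precedes every positive mode, which in turn precedes $\mathbf{k}$, and then compare words first by length and then lexicographically. This is a semigroup order satisfying the descending chain condition (the ordinal sum over lengths of the lexicographic orders on words of fixed length, each of which is well-ordered because the alphabet is), and the chosen block ordering guarantees that the right-hand side of each reduction is smaller than the left-hand side.

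The heart of the proof is the verification that every ambiguity of the reduction system is resolvable. There are no inclusion ambiguities, and because the first relation only fires on a (positive)(negative) pair, two copies of it never overlap; the single overlap ambiguity is the word $\mathbf{k}\,(e_i\otimes t^r)(e_j\otimes t^s)$ with $r>0>s$, where the centrality rule competes with the anticommutation rule. A direct computation shows that both reduction orders terminate at $-(e_j\otimes t^s)(e_i\otimes t^r)\mathbf{k}+\delta_{s,-r}(r-\tfrac12)(e_i,e_j)\mathbf{k}^2$, so the ambiguity is resolvable. The Diamond Lemma then identifies $N(\hat{\h}_{\Z+1/2})$ with the span of its irreducible words, and an irreducible word is precisely one with no (positive)(negative) descent and no mode to the right of a $\mathbf{k}$ --- that is, a product of negative modes, followed by a product of positive modes, followed by a power of $\mathbf{k}$. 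These are exactly the images under $\mu$ of a basis of $T(\hat{\h}_{\Z+1/2}^-)\otimes T(\hat{\h}_{\Z+1/2}^+)\otimes T(\C\mathbf{k})$, so $\mu$ is an isomorphism.

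The main obstacle I anticipate is not the single confluence check (which is short) but getting the order-theoretic bookkeeping right over the infinite alphabet: since the modes are indexed by $\Z+\tfrac12$, which is not well-ordered by its natural order, one must choose the block well-ordering by hand and confirm that length-then-lexicographic comparison really is well-founded and multiplication-compatible before the Diamond Lemma is applicable. As an alternative to the Diamond Lemma, injectivity can instead be obtained by constructing the evident action of $N(\hat{\h}_{\Z+1/2})$ on $W=T(\hat{\h}_{\Z+1/2}^-)\otimes T(\hat{\h}_{\Z+1/2}^+)\otimes T(\C\mathbf{k})$, in which negative modes prepend freely to the first factor, $\mathbf{k}$ acts centrally, and a positive mode is moved to the right through the negative factor by repeated anticommutation; verifying that relations (\ref{ideal-1})--(\ref{ideal-2}) hold for this action is the analogue of the confluence check, and evaluating an ordered monomial on $1\otimes1\otimes1$ recovers the corresponding basis vector of $W$, forcing linear independence.
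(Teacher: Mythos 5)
Your proof is correct, and at its core it is the same rewriting argument as the paper's, but run through a different key lemma. The paper, following Jacobson's proof of the PBW theorem (citing \cite{G-PBW}), constructs an explicit projection $L:T(\hat{\h}_{\Z+1/2})\to T(\hat{\h}_{\Z+1/2})$ that fixes normal-ordered words and kills $J$, defines it by recursion on the number of wrong-order pairs (the ``defect''), and establishes well-definedness by an induction that must treat two configurations separately: two adjacent wrong-order pairs in disjoint positions ($i+1<j$) and two sharing a letter ($i+1=j$). The latter configuration is forced to be $\mathbf{k}\,(e_i\otimes t^r)(e_j\otimes t^s)$ with $r>0>s$, so your single overlap-resolution computation is literally the paper's $i+1=j$ case, with the same terminal expression $-(e_j\otimes t^s)(e_i\otimes t^r)\mathbf{k}+\delta_{s,-r}(r-\frac{1}{2})(e_i,e_j)\mathbf{k}^2$. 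What Bergman's Diamond Lemma buys you is that the disjoint configuration is disposed of once and for all inside the lemma (disjoint reductions commute), so your injectivity argument needs only that one short check; the price is exactly the order-theoretic setup you flag, namely a block well-ordering of the infinite alphabet and the verification that length-then-lexicographic comparison is a multiplication-compatible well-founded order --- bookkeeping the paper never needs because the defect itself serves as its termination measure. Your orientation of the relations is faithful to (\ref{ideal-1}) and (\ref{ideal-2}) (the coefficient $(r-\frac{1}{2})(e_i,e_j)\delta_{s,-r}$ is the paper's $m(a,b)\delta_{m+n+1,0}$ under $r=m+\frac{1}{2}$, $s=n+\frac{1}{2}$), and your fallback suggestion --- realizing $T(\hat{\h}_{\Z+1/2}^-)\otimes T(\hat{\h}_{\Z+1/2}^+)\otimes T(\C\mathbf{k})$ as a module and evaluating at the cyclic vector --- is the other classical PBW strategy; it would also work, with the verification of the module axioms carrying the same content as the confluence check.
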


\begin{proof}
    View $T(\hat{\h}_{\Z+1/2}^-)\otimes T(\hat{\h}_{\Z+1/2}^+) \otimes T(\C\mathbf{k})$ as a subspace of $T(\hat{\h}_{\Z+1/2})$. We first show that
    $$T(\hat{\h}_{\Z+1/2}) = T(\hat{\h}_{\Z+1/2}^-)\otimes T(\hat{\h}_{\Z+1/2}^+) \otimes T(\C\mathbf{k}) + J. $$
    Clearly, $T(\hat{\h}_{\Z+1/2})$ is spanned by elements $u_1\otimes \cdots u_n$, where each $u_i$ is either of the form $h\otimes t^{m+1/2}, m\in \Z$ or $\mathbf{k}$. Using (\ref{ideal-2}), we may move all the $\mathbf{k}$ to the right by adding an element in $J$. Using (\ref{ideal-1}), we may arrange the terms $h\otimes t^{m+1/2}$ with $m+1/2<0$ to the left by adding an element in $J$. What remains in the middle are the terms $h\otimes t^{m+1/2}$ with $m+1/2>0$. This way, we see that each $u_1\otimes \cdots \otimes u_n$ in the spanning set can be a sum of an element of the form
    $$(h_1\otimes t^{-m_1-1/2}) \otimes \cdots \otimes (h_k\otimes t^{-m_k-1/2}) \otimes (h_{k+1}\otimes t^{m_{k+1}+1/2}) \otimes \cdots \otimes (h_l\otimes t^{m_l+1/2}) \otimes \mathbf{k} \otimes \cdots \otimes \mathbf{k}$$
    (for some $k,l\in \N, h_1, ..., h_{k+l}\in\h, m_1, ..., m_{k+l}\in \N$), and an element in the ideal $J$. 

    We now show that the sum is indeed direct, i.e., the intersection of $T(\hat{\h}_{\Z+1/2}^-)\otimes T(\hat{\h}_{\Z+1/2}^+) \otimes T(\C\mathbf{k})$ and $J$ is trivial. We follow the same procedure as Jacobson's proof of the linear independence part Poincar\'e-Birkhoff-Witt theorem (see \cite{G-PBW} for an exposition, see also \cite{H-MOSVA}) by constructing a map $L: T(\hat{\h}_{\Z+1/2})\to T(\hat{\h}_{\Z+1/2})$ such that 
    $$L(J) = 0, L|_{T(\hat{\h}_{\Z+1/2}^-)\otimes T(\hat{\h}_{\Z+1/2}^+) \otimes T(\C\mathbf{k})} = 1. $$
    Then every $v$ in the intersection must satisfy both $L(v) = 0$ and $L(v) = v$, which implies that $v=0$. 

    Consider a basis $e_1, ..., e_{2M}$ of $\h$. Then $T(\hat{\h}_{\Z+1/2})$ has a basis $u_1\otimes \cdots \otimes u_n$, where each $u_i$ is either of the form $e_{k_i}\otimes t^{m_i}$ or of the form $\mathbf{k}$. A pair of elements $(u_i, u_j)$ with $i<j$ is \textit{of wrong order}, if either of the following cases happen: (i) $u_i = e_{k_i}\otimes t^{m_i}, u_j = e_{k_j} \otimes t^{m_j}$ for some $k_i, k_j\in \{1, ..., 2M\}$ and some $m_i>0, m_j<0$; (ii) $u_i = \mathbf{k}$, $u_j =  e_{k_j}\otimes t^{m_j}$ for some $k_i\in \{1, ..., 2M\}, m_j \in \Z+1/2$. We define the \textit{defect} of $u_1\otimes \cdots \otimes u_n$ as the number of pairs of elements $(u_i, u_j)$ with $i<j$ that are of wrong order. 
    
    We define $L$ inductively on the basis elements $u_1 \otimes \cdots \otimes u_n$ by the following rule: $L$ acts on the basis elements with zero defect as identity. In particular, $L$ acts on the degree-zero tensors in $\C$ and the degree-1 tensors in $\hat{\h}_{\Z+1/2}$ as identity. For other basis elements, we define
    \begin{align*}
        & L(u_1\otimes \cdots \otimes (e_{k_i}\otimes t^{m_i+1/2}) \otimes (e_{k_{i+1}}\otimes t^{m_{i+1}+1/2}) \otimes  \cdots \otimes u_n) \\
        = \ &-L(u_1\otimes \cdots \otimes (e_{k_{i+1}}\otimes t^{m_{i+1}+1/2})\otimes (e_{k_i}\otimes t^{m_i+1/2}) \otimes \cdots \otimes u_n)\\
         & + m_{i}(e_{k_{i}}, e_{k_{i+1}}) \delta_{m_i+m_{i+1},0}L(u_1 \otimes \cdots \otimes u_{i-1} \otimes \mathbf{k} \otimes u_{i+2} \otimes \cdots \otimes u_n)
    \end{align*}
    whenever $m_{i}>0$ and $m_{i+1}<0$; we define
    \begin{align*}
         & L(u_1\otimes \cdots \otimes \mathbf{k} \otimes (e_{k_{i+1}}\otimes t^{m_{i+1}+1/2}) \otimes  \cdots \otimes u_n) \\
        = \ & L(u_1\otimes \cdots \otimes (e_{k_{i+1}}\otimes t^{m_{i+1}+1/2})\otimes \mathbf{k} \otimes \cdots \otimes u_n) 
    \end{align*}
    whenever $m_{i+1}\in \Z$. 
    
    We use induction to show that $L$ is well-defined. For basis elements of defect 0, the definition clearly has no ambiguities. For basis element with defect 1, if the pair $(u_i, u_j)$ is of wrong order, then necessarily, $j = i+1$. The definition would then have no ambiguities. Assume that $L$ has no ambiguity for all basis elements with smaller defects and for all tensors of lower degrees. Consider a basis element with defect at least 2. Necessarily, there exists two indices $i<j$ such that the pairs $(u_i, u_{i+1})$ and $(u_j, u_{j+1})$ are both of wrong order. 
    
    \begin{itemize}[leftmargin=*]
        \item If $i+1<j$, then we may express $L(u_1 \otimes \cdots \otimes u_i\otimes u_{i+1} \otimes \cdots \otimes u_j \otimes u_{j+1} \otimes \cdots \otimes u_n)$ as a unambiguous sum of $L(u_1\otimes \cdots \otimes u_{i+1} \otimes u_i \otimes \cdots \otimes u_{j+1}\otimes u_j \otimes \cdots \otimes u_n)$, which is of lower defect, and the images of $L$ on lower-degree tensors. From the induction hypothesis, they are all well-defined. For example, if $u_s = e_{k_s} \otimes t^{m_s}$ for $s = i, i+1, j, j+1$ with $m_{i}, m_{j} > 0, m_{i+1}, m_{j+1}< 0$, then 
    \begin{align*}
        & L(u_1\otimes \cdots \otimes u_i \otimes u_{i+1} \otimes \cdots \otimes u_{j}\otimes u_{j+1} \otimes \cdots \otimes u_n)\\
        = \ & - L(u_1\otimes \cdots \otimes u_{i+1} \otimes u_{i} \otimes \cdots \otimes u_{j}\otimes u_{j+1} \otimes \cdots \otimes u_n) \\ 
        & + m_i(e_{k_i}, e_{k_{i+1}})\delta_{m_i+m_{i+1}, 0} L(u_1\otimes \cdots \otimes u_{i-1} \otimes \mathbf{k} \otimes u_{i+2} \otimes \cdots \otimes u_{j}\otimes u_{j+1} \otimes \cdots \otimes u_n)\\
        = \ &  L(u_1\otimes \cdots \otimes u_{i+1} \otimes u_{i} \otimes \cdots \otimes u_{j+1}\otimes u_j \otimes \cdots \otimes u_n) \\ 
        & - m_j(e_{k_j}, e_{k_{j+1}})\delta_{m_j+m_{j+1}, 0} L(u_1\otimes \cdots \otimes u_{i-1} \otimes \mathbf{k} \otimes u_{i+2} \otimes \cdots \otimes u_{j-1} \otimes \mathbf{k}\otimes u_{j+2} \otimes \cdots \otimes u_n)\\
        & - m_i(e_{k_i}, e_{k_{i+1}})\delta_{m_i+m_{i+1}, 0} L(u_1\otimes \cdots \otimes u_{i-1} \otimes \mathbf{k} \otimes u_{i+2} \otimes \cdots \otimes u_{j}\otimes u_{j+1} \otimes \cdots \otimes u_n)\\
        & + m_i(e_{k_i}, e_{k_{i+1}})\delta_{m_i+m_{i+1}, 0}m_j(e_{k_j}, e_{k_{j+1}})\delta_{m_j+m_{j+1}, 0} \\
        & \qquad \cdot L(u_1\otimes \cdots \otimes u_{i-1} \otimes \mathbf{k} \otimes u_{i+2} \otimes \cdots \otimes u_{j-1} \otimes \mathbf{k}\otimes u_{j+2} \otimes \cdots \otimes u_n)
    \end{align*}
    Starting with $j$ instead of $i$ results in the same expression. So $L(u_1\otimes \cdots \otimes u_n)$ is unambiguously defined with the current choice of $u_s$. For other choices of $u_s$ that involve $\mathbf{k}$, we can proceed with a similar argument to get the same conclusion. Therefore, the inductive step is proved when $i+1 < j$. 
    \item If $i+1 = j$, then we may also express $L(u_1 \otimes \cdots \otimes u_i \otimes u_{i+1} \otimes u_{i+2} \otimes \cdots \otimes u_n)$ as an unambiguous sum of $L(u_1 \otimes \cdots u_{i+2} \otimes u_{i+1} \otimes u_i \otimes \cdots \otimes u_n)$, which is lower defect, and the images of $L$ on lower-degree tensors. For example, let $u_i = \mathbf{k}$, $u_t = e_{k_t}\otimes t^{m_{t}}$ for $t=i+1, i+2$ with $m_{i+1}>0, m_{i+2}<0$. If we first swap $u_i, u_{i+1}$, then swap $u_i, u_{i+2}$, finally swap $(u_{i+1}, u_{i+2})$, then we get
    \begin{align*}
        & L(u_1 \otimes \cdots \otimes u_i \otimes u_{i+1} \otimes u_{i+2} \otimes \cdots \otimes u_n) \\
        = \ & L(u_1 \otimes \cdots \otimes u_{i+1} \otimes u_{i} \otimes u_{i+2} \otimes \cdots \otimes u_n)\\
        = \ & L(u_1 \otimes \cdots \otimes u_{i+1} \otimes u_{i+2} \otimes u_{i} \otimes \cdots \otimes u_n)\\
        = \ & L(u_1 \otimes \cdots \otimes u_{i+2} \otimes u_{i+1} \otimes u_{i} \otimes \cdots \otimes u_n) \\
        & + m_{i+1}(e_{k_{i+1}}, e_{k_{i+2}}) \delta_{m_{i+1}+m_{i+2}, 0} L(u_1 \otimes \cdots \otimes u_{i-1} \otimes \mathbf{k} \otimes u_i \otimes  \cdots \otimes u_n)
    \end{align*}
    On the other hand, if we first swap $u_{i+1}, u_{i+2}$, then swap $u_i, u_{i+2}$, finally swap $u_i, u_{i+1}$, then we get
    \begin{align*}
        & L(u_1 \otimes \cdots \otimes u_i \otimes u_{i+1} \otimes u_{i+2} \otimes \cdots \otimes u_n) \\
        = \ & L(u_1 \otimes \cdots \otimes u_{i} \otimes u_{i+2} \otimes u_{i+1} \otimes \cdots \otimes u_n)\\
        & + m_{i+1}(e_{k_{i+1}}, e_{k_{i+2}}) \delta_{m_{i+1}+m_{i+2}, 0} L(u_1 \otimes \cdots \otimes u_{i-1} \otimes u_i \otimes \mathbf{k}  \otimes  \cdots \otimes u_n)\\
        = \ & L(u_1 \otimes \cdots \otimes u_{i+2} \otimes u_{i} \otimes u_{i+1} \otimes \cdots \otimes u_n)\\
        & + m_{i+1}(e_{k_{i+1}}, e_{k_{i+2}}) \delta_{m_{i+1}+m_{i+2}, 0} L(u_1 \otimes \cdots \otimes u_{i-1} \otimes \mathbf{k} \otimes u_i   \otimes  \cdots \otimes u_n)\\
        = \ & L(u_1 \otimes \cdots \otimes u_{i+2} \otimes u_{i+1} \otimes u_{i}  \otimes \cdots \otimes u_n)\\
        & + m_{i+1}(e_{k_{i+1}}, e_{k_{i+2}}) \delta_{m_{i+1}+m_{i+2}, 0} L(u_1 \otimes \cdots \otimes u_{i-1} \otimes \mathbf{k} \otimes u_i   \otimes  \cdots \otimes u_n)
    \end{align*}
    Starting with $j$ instead of $i$ results in the same expression. So $L(u_1\otimes \cdots \otimes u_n)$ is unambiguously defined with the current choice of $u_s$. For other possible choices of $u_s$, we can proceed with a similar argument to get the same conclusion. Therefore, the inductive step is proved when $i+1 = j$. 
    \end{itemize}
    Therefore, $L$ is a well-defined linear map. The conclusion then follows.    
\end{proof}

\subsection{The vector space $V$, the modes and the generating function of modes}

Let $\C \one$ be a one-dimensional vector space, on which $\mathbf{k}$ acts by the scalar $l\in \C$, $\hat{\h}_{\Z+1/2}^+$ act trivially. Then $\C \one$ is a module for the subalgebra $T(\hat{\h}_{\Z+1/2}^+) \otimes T(\C \mathbf{k})$ of $N(\hat{\h}_{\Z+1/2})$. 
Let 
$$V = N(\hat{\h}_{\Z+1/2})\otimes_{T(\hat{\h}_{\Z+1/2}^+) \otimes T(\C \mathbf{k})} \C\one$$
be the induced module. As a vector space, $V$ is isomorphic to $T(\hat{\h}_{\Z+1/2}^-)\otimes \C\one$. For $h\in \h, m\in \N$, we denote the action of $h\otimes t^{m+1/2}$ by $h(m+1/2)$. We refer $h(m+1/2)$ as positive modes if $m\geq 0$, negative modes if $m<0$. We use the brace brackets $\{\cdot, \cdot\}$ to denote the anti-commutator of the modes. Clearly, for $a, b\in \h, m,n\in \N$,
\begin{align}
    \{a(m+1/2), b(-n-1/2)\} &= a(m+1/2)b(-n-1/2) + b(-n-1/2)a(m+1/2) = (a, b) \delta_{mn} \label{anticom-modes}
\end{align}
Also, it follows from Proposition \ref{PBW} that $V$ is spanned by
\begin{align}
    h_1(-m_1-1/2) \cdots h_r(-m_r-1/2)\one, h_1, ..., h_r \in \h, m_1, ..., m_r\in \N, r\in \N. \label{V-basis}
\end{align}
We assign the half-integer
$$m_1 + \cdots + m_r + \frac r 2$$
as the weight of (\ref{V-basis}) and define the operator $\d$ accordingly. Clearly, for every $h\in \h$ and $n\in \Z$, the mode $h(n+1/2)$ is a homogeneous operator with weight $-n-1/2$.

Although we did not assume any relations among positive modes on $V$, it turns out that they are all anti-commutative. More precisely, 
\begin{prop}\label{pos-mode-prop}
    For $a,b\in \h$, $m,n\in \N$, 
    $$\{a(m+1/2), b(n+1/2)\} = a(m+1/2)b(n+1/2) + b(m+1/2)a(n+1/2) = 0$$
\end{prop}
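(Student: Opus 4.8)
The plan is to show that the operator $\{a(m+1/2), b(n+1/2)\}$ annihilates every spanning vector of the form (\ref{V-basis}), arguing by induction on the number $r$ of negative-mode factors. Only two ingredients are needed: first, that every positive mode annihilates the vacuum, i.e. $a(m+1/2)\one = 0$ for $m\in\N$, which is immediate from the construction of the induced module $V$; and second, that the anticommutator of any positive mode with any negative mode is a \emph{scalar}, as recorded in (\ref{anticom-modes}). I emphasize at the outset that no relation among positive modes is invoked — indeed none is available in $N(\hat{\h}_{\Z+1/2})$ — so the anticommutativity on $V$ is produced entirely by the interaction of positive modes with negative modes and with the vacuum.

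For the base case $r=0$ the spanning vector is $\one$, and since $a(m+1/2)\one = b(n+1/2)\one = 0$ the anticommutator kills it immediately. For the inductive step, write a length-$r$ vector as $w = \beta_1 w'$, where $\beta_1 = h_1(-m_1-1/2)$ is a negative mode and $w'$ is a length-$(r-1)$ vector of the form (\ref{V-basis}). Put $\alpha = a(m+1/2)$ and $\gamma = b(n+1/2)$, and use (\ref{anticom-modes}) to slide each of them past $\beta_1$: $\alpha\beta_1 = -\beta_1\alpha + c$ and $\gamma\beta_1 = -\beta_1\gamma + d$, with scalars $c = (a, h_1)\delta_{m, m_1}$ and $d = (b, h_1)\delta_{n, m_1}$. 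A direct computation then yields
\[\alpha\gamma w = \beta_1\alpha\gamma w' - c\gamma w' + d\alpha w', \qquad \gamma\alpha w = \beta_1\gamma\alpha w' + c\gamma w' - d\alpha w'.\]
Adding these, the scalar contributions cancel in pairs, leaving
\[\{\alpha,\gamma\} w = \beta_1(\alpha\gamma+\gamma\alpha) w' = \beta_1\,\{\alpha,\gamma\}\,w'.\]
By the induction hypothesis $\{\alpha,\gamma\}w' = 0$, hence $\{\alpha,\gamma\}w = 0$. Since the vectors (\ref{V-basis}) span $V$, this establishes $\{a(m+1/2), b(n+1/2)\} = 0$ on all of $V$.

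The argument is elementary, so there is no genuine obstacle; the single point demanding care is the bookkeeping in the inductive step. The cancellation of the scalar terms is not an accident: because $c$ and $d$ are order-independent scalars, they enter $\alpha\gamma w$ and $\gamma\alpha w$ with opposite signs and therefore cancel exactly, leaving only $\beta_1$ times the anticommutator acting on the shorter vector $w'$. The conceptual content — and the pitfall to avoid — is that one must never attempt to interchange $\alpha$ and $\gamma$ directly (precisely the move that is illegitimate at the algebra level); the pair $\{\alpha,\gamma\}$ is kept intact and merely transported past the negative modes toward the vacuum.
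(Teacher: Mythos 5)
Your proof is correct and is essentially the paper's own argument: both proceed by induction on the length $r$ of the spanning vector (\ref{V-basis}), with the base case given by positive modes annihilating $\one$, and the inductive step obtained by sliding both positive modes past the first negative mode $h_1(-m_1-1/2)$ via (\ref{anticom-modes}) and observing that the scalar cross terms cancel, leaving the anticommutator acting on a shorter vector. Your packaging—keeping the pair $\{\alpha,\gamma\}$ intact so that $\{\alpha,\gamma\}w=\beta_1\{\alpha,\gamma\}w'$ drops out in one line—is simply a tidier write-up of the same computation the paper carries out term by term.
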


\begin{proof}
    We check the action of the anti-commutator on the basis elements and show that 
    \begin{align}
        & a(m+1/2)b(n+1/2) h_1(-m_1-1/2)\cdots h_r(-m_r-1/2)\one \label{pos-mode-1}\\
        = \ &  -b(n+1/2)a(m+1/2) h_1(-m_1-1/2)\cdots h_r(-m_r-1/2)\one\label{pos-mode-2}
    \end{align}
    The identity clearly holds when $r= 0$. Assume the identity holds for all smaller $r$. We compute (\ref{pos-mode-1}) as follows
    \begin{align*}
        (\ref{pos-mode-1}) = \ & a(m+1/2) (b, h_1) \delta_{n, m_1} h_2(-m_2-1/2) \cdots h_r(-m_r-1/2)\one  \\
        & - a(m+1/2)h_1(-m_1-1/2) b(n+1/2)  h_2(-m_2-1/2) \cdots h_r(-m_r-1/2)\one\\
        = \ & a(m+1/2) (b, h_1) \delta_{n, m_1} h_2(-m_2-1/2) \cdots h_r(-m_r-1/2)\one  \\
        & - (a, h_1)\delta_{m,m_1} b(n+1/2)  h_2(-m_2-1/2) \cdots h_r(-m_r-1/2)\one\\
        & + h_1(-m_1-1/2) a(m+1/2)b(n+1/2)  h_2(-m_2-1/2) \cdots h_r(-m_r-1/2)\one\\
        (\ref{pos-mode-2})= \ & -b(n+1/2) (a,h_1)\delta_{m,m_1} h_2(-m_2-1/2)\cdots h_r(-m_r-1/2)\one \\
        & + b(n+1/2) h_1(-m_1-1/2) a(m+1/2) h_2(-m_2-1/2)\cdots h_r(-m_r-1/2)\one \\
        = \ & -b(n+1/2) (a,h_1)\delta_{m,m_1} h_2(-m_2-1/2)\cdots h_r(-m_r-1/2)\one \\
        & + (b, h_1)\delta_{n,m_1} a(m+1/2) h_2(-m_2-1/2)\cdots h_r(-m_r-1/2)\one \\
        & - h_1(-m_1-1/2) a(m+1/2) b(n+1/2) h_2(-m_2-1/2)\cdots h_r(-m_r-1/2)\one 
    \end{align*}
    The conclusion then follows from the induction hypothesis. 
\end{proof}

\begin{rema}
    We provide two conceptual explanations regarding the anti-commutativity of positive modes. Mathematically, this happens since $V$ is induced from a trivial module $\mathbb{C}\one$ for $T(\hat{\h}_{\Z+1/2}^+)$ where the anti-commutativity trivially holds. Physically, this means we may annihilating two interacting fermions regardless of the ordering. 
\end{rema}

For every $h\in \h$, we consider the series 
$$h(x) = \sum_{n\in \Z} h(n+1/2) x^{-n-1}\in \End(V)[[x, x^{-1}]]$$
that is the generating functions of all modes associated with $h$. Let 
\begin{align*}
    h(x)^+ &= \sum_{n\geq 0} h(n+1/2) x^{-n-1}\\
    h(x)^- &= \sum_{n\geq 0} h(-n-1/2) x^{n}
\end{align*}
be the partial series consisting of positive modes and negative modes, respectively. Contrary to the notation convention in \cite{LL}, $h(x)^+$ is the singular part of $h(x)$, while $h(x)^-$ is the regular part of $h(x)$. For every $n\in\Z$, we use the notation 
$$(x^{-n-1})^{(m)} = \frac{1}{m!}\frac{\partial^m}{\partial x^m} x^{-n-1} = \binom{-n-1}{m}x^{-n-m-1}. $$
We also use the notation
$$h^{(m)}(x) = \frac{1}{m!}\frac{\partial^m}{\partial x^m} h(x) = \sum_{n\in \Z} h(n+1/2)(x^{-n-1})^{(m)} = \sum_{n\in \Z} \binom{-n-1}{m} h(n+1/2)x^{-n-m-1}$$
Similarly, we have
\begin{align*}
    h^{(m)}(x)^+ &= \frac{1}{m!}\frac{\partial^m}{\partial x^m} h(x)^+ = \sum_{n\geq 0} h(n+1/2) (x^{-n-1})^{(m)}= \sum_{n\geq 0} \binom{-n-1}{m} h(n+1/2)x^{-n-m-1}\\
    h^{(m)}(x)^- &= \frac{1}{m!}\frac{\partial^m}{\partial x^m} h(x)^- =\sum_{n\geq 0} h(-n-1/2) (x^{n})^{(m)}= \sum_{n\geq 0} \binom{n}{m} h(n+1/2)x^{n-m}
\end{align*}

\begin{rema}
    Note that in Part II, we will use the same notation for the generating function of the modes acting on the $\Z_2$-twisted module, while use $h(x)_V$ to denote the generating function of the modes on $V$ we just defined above. We will trust the readers of the both parts not to confuse on these notations. 
\end{rema}

\begin{prop}
    Let 
    \begin{align*}
        f(x,y) &= (x-y)^{-1}, 
        f_{mn}(x,y) = \frac{1}{m!n!}\frac{\partial^{m+n}}{\partial x^m \partial y^n}f(x,y) = \binom{-n-1}{m}(x-y)^{-m-n-1}, m,n\geq 0. 
    \end{align*}
    Then for $a, b\in \h$, the anticommutator of $\{a^{(m)}(x)^+, b^{(n)}(y)^-\}$ is given by
    \begin{align}
        \{a^{(m)}(x)^+, b^{(n)}(y)^-\} &= (a,b) \iota_{xy}f_{mn}(x,y). \label{anticomm-pm}
    \end{align}
    Here $\iota_{xy}$ expands negative powers of $x-y$ as a power series in $y$, i.e., 
    $$\iota_{xy}\left(\frac{1}{(x-y)^t}\right) = \sum_{i=0}^\infty \binom{-t}i x^{t-i}(-y)^i = \sum_{i=0}^\infty \binom{t+i-1}{i} x^{t-i}y^i. $$
\end{prop}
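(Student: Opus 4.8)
The plan is to reduce the general identity to the base case $m=n=0$ by differentiation. Directly from the definitions one has $a^{(m)}(x)^+ = \frac{1}{m!}\frac{\partial^m}{\partial x^m} a(x)^+$ and $b^{(n)}(y)^- = \frac{1}{n!}\frac{\partial^n}{\partial y^n} b(y)^-$, and the assignment $(A,B)\mapsto\{A,B\}$ is bilinear and commutes with the formal operators $\partial_x, \partial_y$, which act only on the scalar powers of $x$ and $y$ and not on the (constant) modes. Consequently
$$\{a^{(m)}(x)^+, b^{(n)}(y)^-\} = \frac{1}{m!\,n!}\frac{\partial^{m+n}}{\partial x^m\,\partial y^n}\{a(x)^+, b(y)^-\},$$
so it suffices to compute the $m=n=0$ anticommutator and then differentiate.

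For the base case I would substitute the defining series $a(x)^+ = \sum_{p\geq 0} a(p+1/2)x^{-p-1}$ and $b(y)^- = \sum_{q\geq 0} b(-q-1/2)y^{q}$, expand the anticommutator as a double sum, and invoke the fundamental relation (\ref{anticom-modes}), namely $\{a(p+1/2), b(-q-1/2)\} = (a,b)\delta_{pq}$. The Kronecker delta collapses the double sum to $(a,b)\sum_{p\geq 0} x^{-p-1}y^{p}$, which is the geometric expansion $(a,b)\,x^{-1}(1-y/x)^{-1} = (a,b)\,\iota_{xy}(x-y)^{-1} = (a,b)\,\iota_{xy}f(x,y)$. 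Here one should note that the double series is well-defined: since $a(x)^+$ carries only strictly negative powers of $x$ and $b(y)^-$ only nonnegative powers of $y$, every coefficient of the product is a finite sum, so the rearrangement is legitimate.

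To conclude I would apply $\frac{1}{m!\,n!}\partial_x^m\partial_y^n$ to the base-case identity. The left-hand side becomes the desired anticommutator by the first paragraph. On the right-hand side the point is that $\iota_{xy}$ commutes with $\partial_x$ and $\partial_y$: differentiating the explicit power series $\iota_{xy}(x-y)^{-1} = \sum_{i\geq 0} x^{-1-i}y^{i}$ term by term agrees with expanding $\frac{1}{m!\,n!}\partial_x^m\partial_y^n (x-y)^{-1}$, which by the definition in the statement is precisely $\iota_{xy}f_{mn}(x,y)$. This yields $(a,b)\,\iota_{xy}f_{mn}(x,y)$, as claimed.

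The main obstacle is the bookkeeping in this last step, namely verifying rigorously that term-by-term differentiation commutes with the expansion map, i.e. $\partial_x\,\iota_{xy}(x-y)^{-t} = \iota_{xy}\,\partial_x(x-y)^{-t}$ and likewise in $y$. Alternatively, I could bypass the formal calculus and compute coefficientwise: after applying (\ref{anticom-modes}) one is left with $(a,b)\sum_{p\geq 0}\binom{-p-1}{m}\binom{p}{n}x^{-p-m-1}y^{p-n}$, and matching this against the $y^{i}$-coefficient of $\binom{-n-1}{m}\iota_{xy}(x-y)^{-m-n-1}$ via the substitution $i=p-n$ reduces the entire proposition to the single binomial identity $\binom{-n-i-1}{m}\binom{n+i}{n} = \binom{-n-1}{m}\binom{m+n+i}{i}$, both sides of which equal $(-1)^m\frac{(m+n+i)!}{m!\,n!\,i!}$. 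This direct route trades the formal-calculus subtlety for an elementary factorial check.
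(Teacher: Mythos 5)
Your proposal follows essentially the same route as the paper: compute the base case $m=n=0$ directly from the mode relation (\ref{anticom-modes}), collapse the double sum via the Kronecker delta to $(a,b)\sum_{i\geq 0}x^{-i-1}y^i = (a,b)\,\iota_{xy}(x-y)^{-1}$, and then obtain the general case by applying $\frac{1}{m!\,n!}\partial_x^m\partial_y^n$. Your extra care about the commutation of $\iota_{xy}$ with differentiation, and the alternative coefficientwise check via the binomial identity, are both correct but only make explicit what the paper leaves implicit.
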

\begin{proof}
    We first work out the case $m=n=0$. In this case, we compute the anti-commutator as follows:
    \begin{align*}
        \{a(x)^+, b(y)^-\} & = \sum_{i, j \geq 0} \{a(i+1/2), b(-j-1/2)\}x^{-i-1}y^j = \sum_{i,j \geq 0} (a, b)\delta_{ij}x^{-i-1}y^j\\
        & = \sum_{i\geq 0} (a, b)x^{-i-1}y^i = (a, b) \iota_{xy}(x-y)^{-1}. 
    \end{align*}
    The conclusion for general $m, n$ follows from taking partial derivatives. 
\end{proof}

\subsection{Normal ordering and vertex operator}
\begin{defn}\label{nord-defn-modes}
Let $h_1, ..., h_r\in \h$, $m_1, ..., m_r\in \Z$. We define the \textit{normal ordering} of the product $h_1(m_1+1/2), ..., h_r(m_r+1/2)$ of the modes by
\begin{align*}
    :h_1(m_1+1/2)\cdots h_r(m_r+1/2): = (-1)^\sigma h_{\sigma(1)}(m_{\sigma(1)}+1/2)\cdots  h_{\sigma(r)}(m_{\sigma(r)}+1/2) 
\end{align*}
where $\sigma$ is the unique permutation of $\{1, ..., r\}$ such that
\begin{align}
    m_{\sigma(1)}+1/2 < 0, ..., m_{\sigma(\eta)}+1/2 < 0, \ & m_{\sigma(\eta+1)}+1/2 > 0, ..., m_{\sigma(r)}+1/2 > 0 \\
    \sigma(1)< \cdots < \sigma(\eta), \ & \sigma(\eta+1)<\cdots < \sigma(r), \label{shuffle-defn}
\end{align}
$(-1)^\sigma$ is the parity of the permutation $\sigma$. 

Basically speaking, the normal ordered product of $h_1(m_1+1/2), ..., h_r(m_r+1/2)$ arranges all the negative modes to the left and all the positive modes to the right, meanwhile does not change the ordering among positive modes and that among negative modes. For example, if $m_1, m_2, m_3, m_4, m_5, m_6 \in \N$, then 
\begin{align*}
    & :h_1(-m_1-1/2)h_2(m_2+1/2)h_3(m_3+1/2)h_4(-m_4-1/2)h_5(m_5+1/2)h_6(-m_6-1/2):\\
    & \quad = - h_1(-m_1-1/2)h_4(-m_4-1/2)h_6(-m_6-1/2) h_2(m_2+1/2)h_3(m_3+1/2)h_5(m_5+1/2).
\end{align*}
\end{defn}

\begin{rema}
    Permutations satisfying (\ref{shuffle-defn}) is called a 2-shuffle. For every fixed $\eta\in \{0, ..., r\}$, the collection of permutations satisfying (\ref{shuffle-defn}) is denoted by $J_\eta(1, ..., r)$. We would also identify a 2-shuffle in $J_\eta(1, ... r)$ with two increasing sequences 
    $$p_1 < \cdots < p_\eta, p_1^c < \cdots < p_{r-\eta}^c$$
    in $\{1, ..., n\}$ that are complementary to each other, i.e., $\{p_1, ..., p_\eta\} \amalg \{p_1^c, ..., p_{r-\eta}^c\} = \{1, ..., r\}$. When $\eta =0$ or $\eta = r$, one of the increasing sequence is empty. In this case, $J_\eta(1, ..., r)$ consists of only the identity permutation $(1)$. We will not extensively use shuffles in Part I because there exists a recurrence relation of normal-ordered product that provides a more conceptual approach to the proof of the necessary identities (see Proposition \ref{recurrence-nord-prop}). We will have to use shuffles extensively in Part II when such recurrence does not exist. 
\end{rema}

\begin{defn}\label{nord-defn-series}
    The definition of the normal ordering extends naturally to all linear combinations of products of modes. With the same philosophy, we define the \textit{normal ordering} of the product series $h_1^{(m_1)}(x_1)\cdots  h_r^{(m_r)}(x_1)$ by components, i.e., 
    \begin{align*}
        & :h_1^{(m_1)}(x_1) \cdots h_r^{(m_r)}(x_r): \\
        = \ & \sum_{n_1, ..., n_r\in \Z} :h_1(n_1+1/2)\cdots h_r(n_r+1/2):(x_1^{(-n_1-1})^{(m_1)} \cdots (x_r^{(-n_r-1})^{(m_r)} 
    \end{align*}
\end{defn}

\begin{rema}
Clearly, the normal ordering operation the operation is ``multilinear'', in the sense that for $c_1, c_2\in \C$
\begin{align*}
    & :h_1^{(m_1)}(x_1) \cdots \left(c_1 h_{i_1}^{(m_{i_1})}(x_i) + c_2 h_{i_2}^{(m_{i_2})}(x_i)\right) \cdots  h_r^{(m_r)}(x_r):    \\
    = \ & c_1 :h_1^{(m_1)}(x_1) \cdots h_{i_1}^{(m_{i_1})}(x_i) \cdots  h_r^{(m_r)}(x_r): + c_2 :h_1^{(m_1)}(x_1) \cdots h_{i_2}^{(m_{i_2})}(x_i) \cdots  h_r^{(m_r)}(x_r): .
\end{align*}
    
\end{rema}

\begin{rema}\label{zero-suffices}
    Clearly, the normal ordering operation commutes with partial derivation: for every $i=1, ..., r$
$$\frac{\partial}{\partial x_i}:h_1^{(m_1)}(x_1) \cdots h_r^{(m_r)}(x_r): = :\frac{\partial}{\partial x_i}h_1^{(m_1)}(x_1) \cdots h_r^{(m_r)}(x_r):.  $$
This property simplifies the proofs of many identities in this paper. With this property, it suffices to show the identity in the case when $m_1 = \cdots = m_r = 0$. 

\end{rema}

\begin{defn}\label{vo-defn}
    We now define the vertex operator 
    $$Y: V\otimes V \to V((x))$$
    on the basis elements: For $\one \in V$, we set 
    $$Y(\one, x) = 1_V. $$
    For $h_1, ..., h_r\in \h, m_1, ..., m_r\in \N$, 
    \begin{align}
        & Y(h_1(-m_1-1/2)\cdots h_r(-m_r-1/2)\one, x) \nonumber\\
        = \ & :h_1^{(m_1)}(x)\cdots h_r^{(m_r)}(x):\nonumber\\
        = \ & \sum_{\eta = 0}^r \sum_{\sigma\in J_\eta(1, ..., r)} h_{\sigma(1)}^{(m_{\sigma(1)})}(x)^- \cdots h_{\sigma(\eta)}^{(m_{\sigma(\eta)})}(x)^- h_{\sigma(\eta+1)}^{(m_{\sigma(\eta+1)})}(x)^+ \cdots h_{\sigma(r)}^{(m_{\sigma(r)})}(x)^+ 
    \end{align}
    The vertex operator clearly extends to every element $u\in V$. It is also clear that for every fixed element $v\in V$, $Y(u, x)v\in V((x))$. 
\end{defn}

From Proposition \ref{pos-mode-prop}, we have the following recurrence relation of normal-ordered products, which will be useful in formulating the proofs. 
\begin{prop}\label{recurrence-nord-prop}
    For $r\in \Z_+$, $h_1, ..., h_r\in \h, m_1, ..., m_r \in \N$, 
    \begin{align}
        & :h_1^{(m_1)}(x_1)\cdots h_r^{(m_r)}(x_r): \nonumber\\ = \ & h_1^{(m_1)}(x_1)^-:h_2^{(m_2)}(x_2) \cdots h_r^{(m_r)}(x_r): \nonumber\\ 
        & + (-1)^{r-1} :h_2^{(m_2)}(x_2) \cdots h_r^{(m_r)}(x_r): h_1^{(m_1)}(x_1)^+. 
    \end{align}
\end{prop}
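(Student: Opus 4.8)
The plan is to reduce the asserted series identity to a family of mode-level identities and to verify those by tracking the sign produced by the normal-ordering permutation. By Remark \ref{zero-suffices} it suffices to treat $m_1 = \cdots = m_r = 0$, since normal ordering commutes with the operators $\tfrac{1}{m_i!}\partial_{x_i}^{m_i}$ and $h_i^{(m_i)}(x_i)^{\pm} = \tfrac{1}{m_i!}\partial_{x_i}^{m_i}\,h_i(x_i)^{\pm}$, so the general case is obtained by differentiating the $m_i=0$ case. Extracting the coefficient of $x_1^{-n_1-1}\cdots x_r^{-n_r-1}$ then reduces everything to the mode identities indexed by $(n_1,\dots,n_r)\in\Z^r$. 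Because $h_1(x_1)^-$ contributes only negative modes and $h_1(x_1)^+$ only positive modes, the right-hand side splits into two cases according to the sign of $n_1$.

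First I would dispose of the case $n_1<0$, where $h_1(n_1+1/2)$ is a negative mode. It carries the smallest index and is already the leftmost negative mode, so the permutation $\sigma$ defining $:h_1(n_1+1/2)\cdots h_r(n_r+1/2):$ fixes the first slot and restricts on the remaining slots to the permutation that orders $h_2(n_2+1/2)\cdots h_r(n_r+1/2)$. Hence the normal-ordered product factors, with no extra sign, as $h_1(n_1+1/2):h_2(n_2+1/2)\cdots h_r(n_r+1/2):$, which is exactly the coefficient contributed by the first right-hand term (the second vanishes since $n_1<0$).

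The substantive case is $n_1\geq 0$, where $h_1(n_1+1/2)$ is a positive mode. In the normal-ordered form the negative modes stand to the left and the positive modes to the right; since $h_1(n_1+1/2)$ has the smallest index among the positives, it sits immediately after the block $N$ of negative modes and before the block $P$ of positive modes coming from $h_2,\dots,h_r$. Writing $j = |N|$ and $k=|P|$, so $j+k = r-1$, the sorting sign factors as $(-1)^{\sigma'}(-1)^{j}$, where $(-1)^{\sigma'}$ is the normal-ordering sign of $h_2(n_2+1/2)\cdots h_r(n_r+1/2)$ and $(-1)^{j}$ accounts for moving $h_1(n_1+1/2)$ rightward past the $j$ negatives; thus $:h_1\cdots h_r: = (-1)^{\sigma'}(-1)^{j}\,N\,h_1(n_1+1/2)\,P$. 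The target term $(-1)^{r-1}:h_2\cdots h_r:\,h_1(n_1+1/2)$ equals $(-1)^{r-1}(-1)^{\sigma'}\,N\,P\,h_1(n_1+1/2)$. Here is the one place where Proposition \ref{pos-mode-prop} is needed: to match these expressions I would move $h_1(n_1+1/2)$ leftward across the block $P$ of $k$ positive modes, and anti-commutativity of positive modes supplies precisely the sign $(-1)^{k}$. Since $(-1)^{j}(-1)^{k} = (-1)^{r-1}$, the two expressions agree, and summing over $(n_1,\dots,n_r)$ recovers the series identity.

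The main obstacle I anticipate is bookkeeping rather than conceptual: one must keep the purely combinatorial sign coming from the defining permutation of the normal ordering strictly separate from the genuine operator sign produced by Proposition \ref{pos-mode-prop}, as only the latter invokes an actual relation among modes while the former is an artifact of the definition. Once the contribution $(-1)^{j}$ (permutation) and $(-1)^{k}$ (anti-commutativity) are correctly attributed and combined using $j+k=r-1$, the recurrence follows.
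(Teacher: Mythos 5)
Your proposal is correct and follows essentially the same route as the paper's proof: both reduce to $m_1=\cdots=m_r=0$ via Remark \ref{zero-suffices}, split according to whether the $h_1$-mode is negative or positive (equivalently, decompose $h_1(x_1)=h_1(x_1)^-+h_1(x_1)^+$), observe that the negative case factors off with no sign since index $1$ is leftmost among the negative modes, and in the positive case combine the combinatorial shuffle sign $(-1)^{j}$ (the paper's $(-1)^{\eta}$ from $\tau=\sigma\circ(\eta+1,\ldots,2,1)$) with the operator sign $(-1)^{k}$ from Proposition \ref{pos-mode-prop} to produce $(-1)^{r-1}$. Your explicit separation of the permutation-artifact sign from the genuine anti-commutativity sign is exactly the bookkeeping the paper carries out with its 2-shuffle formalism.
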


\begin{proof}
It suffices to prove the case when $m_1 = \cdots = m_r = 0$. The case with general nonzero $m_1, ..., m_r$ can be easily obtained by taking the partial derivatives. 
\begin{align}
    & :h_1(x_1) h_2(x_2) \cdots h_r(x_r):\nonumber \\
    = \ & :h_1(x_1)^- h_2(x_2) \cdots h_r(x_r): \label{recurrence-nord-1}\\
    & + :h_1(x_1)^+ h_2(x_2) \cdots h_r(x_r):\label{recurrence-nord-2}
\end{align}
We compute (\ref{recurrence-nord-1}) as follows
\begin{align*}
    (\ref{recurrence-nord-1})= \ & \sum_{n_1 \geq 0}\sum_{n_2, ..., n_r\in \Z} :h_1(-n_1-1/2) h_2(n_2+1/2)\cdots h_r(n_r+1/2):(x_1^{n_1})(x_2^{-n_2-1})\cdots (x_r^{-n_r-1})\\
    = \ & \sum_{n_1 \geq 0}\sum_{n_2, ..., n_r\in \Z} (x_1^{n_1})(x_2^{-n_2-1})\cdots (x_r^{-n_r-1})\\
    & \cdot \sum_{\eta=0}^r \sum_{\substack{\sigma\in J_{\eta}(1, ..., r)\\\sigma(1) = 1}}(-1)^\sigma h_1(-n_1-1/2) h_{\sigma(2)}(n_{\sigma(2)}+1/2)\cdots h_{\sigma(r)}(n_{\sigma(r)}+1/2)
\end{align*}
Since $\sigma(1)=1$, the lower limit of $\eta$ in the sum should be 1. We may regard $\sigma$ as a 2-shuffle in $J_{\eta-1}(2, ..., r)$ with the same parity. Summing over all $\sigma \in J(1, ..., r)$ with $\sigma(1)=1$ amounts to summing over all $\sigma\in J(2, ..., r)$. Then with an index shift of $\eta$, we see that  
\begin{align*}
    (\ref{recurrence-nord-1}) = \ & \sum_{n_1 \geq 0}\sum_{n_2, ..., n_r\in \Z} (x_1^{n_1})(x_2^{-n_2-1})\cdots (x_r^{-n_r-1})\\
    & \cdot \sum_{\eta=0}^{r-1} \sum_{\substack{\sigma\in J_{\eta}(2, ..., r)}}(-1)^\sigma h_1(-n_1-1/2) h_{\sigma(2)}(n_{\sigma(2)}+1/2)\cdots h_{\sigma(r)}(n_{\sigma(r)}+1/2)\\
    = \ & \sum_{n_1 \geq 0}h_1(-n_1-1/2)(x_1^{n_1}) \sum_{n_2, ..., n_r\in \Z} :h_2(n_2+1/2)\cdots h_r(n_r+1/2):(x_2^{-n_2-1})\cdots (x_r^{-n_r-1})\\
    = \ & h_1(x_1)^- : h_2(x_1) \cdots h_r(x_r):
\end{align*}
We compute (\ref{recurrence-nord-2}) as follows: 
\begin{align*}
    (\ref{recurrence-nord-2})= \ & \sum_{n_1 \geq 0}\sum_{n_2, ..., n_r\in \Z} :h_1(n_1+1/2) h_2(n_2+1/2)\cdots h_r(n_r+1/2):(x_1^{-n_1-1})(x_2^{-n_2-1})\cdots (x_r^{-n_r-1})\\
    = \ & \sum_{n_1 \geq 0}\sum_{n_2, ..., n_r\in \Z}  (x_1^{-n_1-1})(x_2^{-n_2-1})\cdots (x_r^{-n_r-1})\\
    & \cdot \sum_{\eta=0}^r \sum_{\substack{\sigma\in J_{\eta}(1, ..., r)\\\sigma(\eta+1)=1}}(-1)^\sigma h_{\sigma(1)}(n_{\sigma(1)}+1/2)\cdots h_{\sigma(\eta)}(n_{\sigma(\eta)}+1/2) h_1(n_1+1/2)\cdots   h_{\sigma(r)}(n_{\sigma(r)}+1/2)
\end{align*}
Using anti-commuativity of positive modes, we move $h_1(n_1+1/2)$ to the right, resulting in 
\begin{align*}
    & \sum_{n_1 \geq 0}\sum_{n_2, ..., n_r\in \Z}  (x_1^{n_1})(x_2^{-n_2-1})\cdots (x_r^{-n_r-1})\\
    & \cdot \sum_{\eta=0}^r \sum_{\substack{\sigma\in J_{\eta}(1, ..., r)\\\sigma(\eta+1)=1}}(-1)^\sigma h_{\sigma(1)}(n_{\sigma(1)}+1/2)\cdots h_{\sigma(\eta)}(n_{\sigma(\eta)}+1/2)\\
    & \qquad \qquad \qquad \quad \cdot (-1)^{r-\eta-1}h_{\sigma(\eta+2)}(n_{\sigma(\eta+2)}+1/2) \cdots   h_{\sigma(r)}(n_{\sigma(r)}+1/2)h_1(n_1+1/2)
\end{align*}
We now regard $\sigma(1), ..., \sigma(\eta), \sigma(\eta+2), ..., \sigma(r)$ as two increasing sequences in $\{2, ..., r\}$, which should correspond to a 2-shuffle $\tau\in J_{\eta}(2, ..., r)$ with 
$$\tau(2) = \sigma(1), ..., \tau(\eta+1) = \sigma(\eta), \tau(\eta+2) = \sigma(\eta+2), ..., \tau(r) = \sigma(r).$$
Regard $\tau$ as an element in $\Sym(1, ..., r)$, then we have 
$$\tau = \sigma \circ (\eta+1, \eta, ..., 2, 1).$$
As a result, we have 
$$(-1)^\tau = (-1)^\sigma \cdot (-1)^\eta. $$
Summing over all $\sigma\in J_\eta(1, ..., r)$ with $\sigma(\eta+1)=1$ amounts to summing over all $\tau\in J_\eta(2, ..., r)$. Clearly, the upper limit of $\eta$ is $r-1$ instead of $r$. Therefore, 
\begin{align*}
    (\ref{recurrence-nord-2}) = \ & \sum_{n_1 \geq 0}\sum_{n_2, ..., n_r\in \Z}  (x_1^{-n_1-1})(x_2^{-n_2-1})\cdots (x_r^{-n_r-1})\\
    & \cdot \sum_{\eta=0}^{r-1} \sum_{\substack{\tau\in J_{\eta}(2, ..., r)}}(-1)^\tau (-1)^\eta h_{\tau(2)}(n_{\tau(2)}+1/2)\cdots h_{\tau(\eta+1)}(n_{\tau(\eta+1)}+1/2)\\
    & \qquad \qquad \qquad \quad \cdot (-1)^{r-\eta-1}h_{\tau(\eta+2)}(n_{\tau(\eta+2)}+1/2) \cdots   h_{\tau(r)}(n_{\tau(r)}+1/2)h_1(n_1+1/2)\\
    = \ & (-1)^{r+1}\sum_{n_2, ..., n_r\in \Z} (x_2^{-n_2-1})\cdots (x_r^{-n_r-1})\\
    & \cdot \sum_{\eta=0}^{r-1} \sum_{\substack{\tau\in J_{\eta}(2, ..., r)}}(-1)^\tau  h_{\tau(2)}(n_{\tau(2)}+1/2)\cdots h_{\tau(r)}(n_{\tau(r)}+1/2)\sum_{n_1 \geq 0}h_1(n_1+1/2) (x_1^{-n_1-1})\\
    = \ & (-1)^{r-1} :h_2(x_2)\cdots h_r(x_r): h_1(x_1)^+. 
\end{align*}
\end{proof}

\begin{rema}
    The anti-commutativity of positive modes plays a crucial part in the proof of Proposition \ref{recurrence-nord-prop}. On the other hand, since we do not have anti-commutativity of negative modes, there does not exist a right-hand-recurrence that can express the normal-ordered product in terms of $:h_1^{(m_1)}(x_1)\cdots h_{r-1}^{(m_{r-1})}(x):$ and $h_r^{(m_r)}(x)$
\end{rema}

\begin{thm}\label{main}
    $(V, Y, \one)$ forms a $\frac \Z 2$-graded meromorphic open-string vertex algebra.
\end{thm}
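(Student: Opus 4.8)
The plan is to verify the four families of axioms in Definition \ref{DefMOSVA} one at a time, disposing of the grading, vacuum and $D$-axioms directly from the explicit mode description, and isolating weak associativity as the single substantial point, to be reduced to the product formula.

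\emph{Grading and vacuum axioms.} The lower bound condition is immediate, since every spanning vector \eqref{V-basis} has weight $m_1+\cdots+m_r+\frac r2\ge 0$, so $V_{(n)}=0$ for $n<0$. For the $\d$-commutator formula I would record that each mode $h(n+1/2)$ is homogeneous of weight $-n-1/2$, so that $[\d,h(x)]=x\frac{d}{dx}h(x)+\frac12 h(x)$, and more generally $[\d,h^{(m)}(x)]=(x\frac{d}{dx}+m+\frac12)h^{(m)}(x)$. Since $[\d,\cdot]$ is a derivation and $x\frac{d}{dx}$ commutes with the normal ordering of Definition \ref{vo-defn}, applying this factor by factor to a generator \eqref{V-basis} produces $x\frac{d}{dx}Y(v,x)$ from the derivative parts and $Y(\d v,x)$ from the scalar parts $m_i+\frac12$ summing to the weight of $v$. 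The identity property $Y(\one,x)=1_V$ holds by definition, and for the creation property one uses that the positive modes annihilate $\one$ by construction of the induced module, so that in the normal-ordered product only the all-negative-mode term survives:
\begin{align*}
Y\big(h_1(-m_1-\tfrac12)\cdots h_r(-m_r-\tfrac12)\one,\,x\big)\one = h_1^{(m_1)}(x)^-\cdots h_r^{(m_r)}(x)^-\one,
\end{align*}
which lies in $V[[x]]$ and has constant term $h_1(-m_1-\frac12)\cdots h_r(-m_r-\frac12)\one$, whence $\lim_{x\to0}Y(u,x)\one=u$.

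\emph{The $D$-axioms.} I would define $D_V$ by the limit formula and compute, using $\frac{d}{dx}h^{(m)}(x)=(m+1)h^{(m+1)}(x)$ together with the previous display, that on a generator
\begin{align*}
D_V\big(h_1(-m_1-\tfrac12)\cdots h_r(-m_r-\tfrac12)\one\big)=\sum_{i=1}^r (m_i+1)\,h_1(-m_1-\tfrac12)\cdots h_i(-m_i-\tfrac32)\cdots h_r(-m_r-\tfrac12)\one,
\end{align*}
equivalently $D_V\one=0$ and $[D_V,h(n+1/2)]=-n\,h(n-1/2)$. The key structural point is that this commutator sends positive modes to positive modes and negative modes to negative modes, so $D_V$ respects the polarization and hence commutes with normal ordering. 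Combined with $\frac{d}{dx}h^{(m)}(x)=(m+1)h^{(m+1)}(x)$ and Remark \ref{zero-suffices} (that $\partial_x$ passes through normal ordering), matching terms on generators gives $\frac{d}{dx}Y(v,x)=Y(D_Vv,x)$; and since $[D_V,\cdot]$ is a derivation that commutes with the normal ordering, the same factor-by-factor computation gives $\frac{d}{dx}Y(v,x)=[D_V,Y(v,x)]$.

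\emph{Weak associativity (the main obstacle).} Both $Y(u_1,x_0+x_2)Y(u_2,x_2)v$ and $Y(Y(u_1,x_0)u_2,x_2)v$ are products of normal-ordered products, and the plan is to expand each through the generalized Wick formula of Theorem \ref{Product-thm}, which rewrites any such product as a finite sum of normal-ordered products weighted by contraction factors, each an $\iota$-expansion of a rational function $(x_i-x_j)^{-k}$ coming from \eqref{anticomm-pm}. On the product side the contractions between the two operators are $\iota$-expanded in the region $|x_0+x_2|>|x_2|$, whereas on the iterate side the inner operator $Y(u_1,x_0)u_2$ produces contractions expanded for $|x_2|>|x_0|$; in both cases the underlying rational functions agree. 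Multiplying by $(x_0+x_2)^p$ with $p$ large enough to clear the finitely many poles turns both expressions into genuine power-series expansions of one common $V$-valued rational expression, forcing their equality; the uniformity of $p$ in $u_2$ follows because the number and order of contractions that can produce poles at $x_0+x_2=0$ is bounded in terms of $u_1$ and $v$ alone. I expect essentially all the real work to lie in Theorem \ref{Product-thm} itself — establishing the closed product formula via the recurrence of Proposition \ref{recurrence-nord-prop} and the anticommutator \eqref{anticomm-pm} — together with the careful sign bookkeeping and the tracking of the regions of expansion needed to see that the two sides are expansions of the same rational function after clearing denominators.
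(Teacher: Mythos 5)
Your proposal follows the paper's proof in all essentials: the grading, vacuum, and $D$-axioms are checked on generators exactly as in the paper (the paper implements your ``$[D_V,\cdot]$ commutes with normal ordering'' remark as an induction via the recurrence of Proposition \ref{recurrence-nord-prop}), and weak associativity is reduced to the generalized Wick product formula of Corollary \ref{Product-thm} together with the iterate formula of Corollary \ref{Iterate-thm}, followed by clearing negative powers of $x_0+x_2$.

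One justification in your weak-associativity step is misattributed, although the conclusion survives. The poles at $x_0+x_2=0$ do \emph{not} come from the Wick contractions: after substituting $x\mapsto x_0+x_2$ the contraction coefficients become $f_{m_in_j}(x_0+x_2,x_2)=(x_0^{-n_j-1})^{(m_i)}$, Laurent monomials in $x_0$ alone, so contractions contribute poles only at $x_0=0$, which need not be cleared at all. The negative powers of $x_0+x_2$ arise instead from the positive-mode parts $a_i^{(m_i)}(x_0+x_2)^+$ of the outer operator acting on the right-most vector ($v$ in your notation, $w$ in the paper's). The uniformity of $p$ in the middle vector $u_2$ then follows from the structure of the normal ordering: all positive modes stand to the right of all negative modes, so $u_2$'s positive modes hit the right-most vector first and can only lower its weight (the grading is bounded below by $0$), while $u_2$'s negative modes act only after $u_1$'s positive modes have acted. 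This is precisely why the paper can take $P=\wt(w)+(m_1+1)+\cdots+(m_r+1)$, depending only on $u_1$ and the right-most vector. With this correction your argument coincides with the paper's, which in addition observes that after the substitution the product and iterate formulas agree verbatim except for $a_i^{(m_i)}(x+y)$ versus $a_i^{(m_i)}(y+x)$, so the rational-function and expansion-region language can be bypassed by a purely formal interchange of $x$ and $y$ once the negative powers of $x+y$ are gone.
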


\begin{proof}
    Here we check all axioms except the weak associativity, which will be addressed in the next section. 
    \begin{enumerate}[leftmargin=*]
        \item Axioms for the grading: Clearly the grading of $V$ is bounded below by 0. For the $\d$-commutator formula, we first compute the left-hand-side 
        \begin{align*}
            & [\d,  Y(h_1(-m_1-1/2)\cdots h_r(-m_r-1/2)\one, x)] \\
            = \ & \sum_{n_1, ..., n_r\in \Z} [\d, :h_1(n_1+1/2)\cdots h_r(n_r+1/2):] (x_1^{-n_1-1})^{(m_1)}\cdots (x_r^{-n_r-1})^{(m_r)}. 
        \end{align*}
        The weight of the endomorphism $h_1(n_1+1/2)\cdots h_r(n_r+1/2)$ is 
        $$\sum_{i=1}^r\left( -n_i - \frac 1 2\right).$$
        The normal ordering of the endomorphism does not change the weight. Therefore, 
        \begin{align*}
            & [\d,  Y(h_1(-m_1-1/2)\cdots h_r(-m_r-1/2)\one, x)] \\
            = \ &  \sum_{n_1, ..., n_r\in \Z} \sum_{i=1}^r\left(-n_i - \frac 1 2\right):h_1(n_1+1/2)\cdots h_r(n_r+1/2): (x_1^{-n_1-1})^{(m_1)}\cdots (x_r^{-n_r-1})^{(m_r)}. \\ 
            = \ & \sum_{i=1}^r\sum_{n_1, ..., n_r\in \Z} \left(-n_i - m_i - 1\right):h_1(n_1+1/2)\cdots h_r(n_r+1/2): (x_1^{-n_1-1})^{(m_1)}\cdots (x_r^{-n_r-1})^{(m_r)}. \\ 
            & + \sum_{i=1}^r\sum_{n_1, ..., n_r\in \Z} \left( m_i + \frac 1 2\right):h_1(n_1+1/2)\cdots h_r(n_r+1/2): (x_1^{-n_1-1})^{(m_1)}\cdots (x_r^{-n_r-1})^{(m_r)}\\
            = \ & x\frac d {dx} Y(h_1(-m_1-1/2)\cdots h_r(-m_r-1/2)\one x) +  Y(\d h_1(-m_1-1/2)\cdots h_r(-m_r-1/2)\one x) 
        \end{align*}
    \item Axioms for the vaccum: The identity property follows directly from the definition. For the creation property, notice that
    \begin{align*}
        & Y(h_1(-m_1-1/2)\cdots h_1(-m_r-1/2)\one, x) \one \\
        = \ & h_1^{(m_1)}(x)^-\cdots h_r^{(m_r)}(x)^- \one 
    \end{align*}
    contains only nonnegative powers of $x$. Note that $$\lim_{x\to 0}h_i^{(m_i)}(x)^- = \binom{m_i}{m_i} h_i(-m_i-1/2) = h_i(-m_i-1/2).$$ 
    Thus 
    \begin{align*}
        & \lim_{x\to 0}Y(h_1(-m_1-1/2)\cdots h_1(-m_r-1/2)\one, x) \one \\
        = \ & \lim_{x\to 0}h_1^{(m_1)}(x)^-\cdots h_r^{(m_r)}(x)^- \one\\
        = \ & h_1(-m_1-1/2)\cdots h_r(-m_r-1/2)\one. 
    \end{align*}
    \item $D$-derivative property and $D$-commutator formula: We first compute that 
    \begin{align*}
        & D h_1(-m_1-1/2)\cdots h_r(-m_r-1/2)\one \\
        = \ & \lim_{x\to 0}\frac d {dx} h_1^{(m_1)}(x)^-\cdots h_r^{(m_r)}(x)^- \one \\
        = \ & \lim_{x\to 0} \sum_{i=1}^r (m_i+1) h_1^{(m_1)}(x)^-\cdots h_i^{(m_i+1)}(x)^- \cdots h_r^{(m_r)}(x)^- \one\\
        = \ & \sum_{i=1}^r (m_i+1)h_1(-m_1-1/2)\cdots h_i(-m_i-3/2)\cdots h_r(-m_r-1/2)\one. 
    \end{align*}
    Thus
    \begin{align*}
        & Y(Dh_1(-m_1-1/2)\cdots h_r(-m_r-1/2)\one, x) \\
        = \ & \sum_{i=1}^r (m_i+1) Y(h_1(-m_1-1/2)\cdots h_i(-m_i-3/2) \cdots h_r(-m_r-1/2)\one, x)\\
        = \ & \sum_{i=1}^r (m_i+1) :h_1^{(m_1)}(x) \cdots h_i^{(m_i+1)}(x) \cdots h_r^{(m_r)}(x): \\
        = \ & \sum_{i=1}^r :h_1^{(m_1)}(x) \cdots \frac d {dx} h_i^{(m_i)}(x) \cdots h_r^{(m_r)}(x): \\
        = \ & \frac d {dx} :h_1^{(m_1)}(x) \cdots h_r^{(m_r)}(x): \\
        = \ & \frac d {dx} Y(h_1(-m_1-1/2)\cdots h_r(-m_r-1/2)\one, x)
    \end{align*}
    So the $D$-derivative property holds. 
    
    To check the $D$-commutator formula, we start by checking that for $h\in \h, m\in \Z$,     
    \begin{align}
        [D, h(m+1/2)] = -m h(m-1/2)\label{D-comm-1}
    \end{align}
    \begin{itemize}[leftmargin=*]
        \item In case $m+1/2>0$, we first compute the action of $D h(m+1/2)$ on a basis element: 
    \begin{align*}
        & D h(m+1/2) h_1(-m_1-1/2)\cdots h_r(-m_r-1/2) \one \\
        = \ & \sum_{i=1}^r (-1)^{i-1}\delta_{m,m_i} (h, h_i) \\
        & \qquad \cdot  D h_1(-m_1-1/2)\cdots \reallywidehat{h_i(-m_i-1/2)}\footnote{\textcolor{red}{Need to declare widehat notation somewhere}} \cdots h_r(-m_r-1/2) \one\\
        = \ & \sum_{i=1}^r \sum_{j=1}^{i-1} (m_j+1)(-1)^{i-1}\delta_{m,m_i}(h, h_i) \\
        & \qquad \qquad \cdot h_1(-m_1-1/2) \cdots h_j(m_j-3/2) \cdots \reallywidehat{h_i(-m_i-1/2)}\cdots h_r(-m_r-1/2) \one\\
        & + \sum_{i=1}^r \sum_{j=i+1}^{r} (m_j+1)(-1)^{i-1}\delta_{m,m_i}(h, h_i)  \\
        & \quad \qquad \qquad \cdot h_1(-m_1-1/2) \cdots \reallywidehat{h_i(-m_i-1/2)} \cdots h_j(m_j-3/2) \cdots h_r(-m_r-1/2) \one. 
    \end{align*}
    Now we compute the action of $h(m+1/2)D$ on a basis element: 
    \begin{align*}
        & h(m+1/2) D h_1(-m_1-1/2)\cdots h_r(-m_r-1/2)\one \\
        = \ & \sum_{j=1}^r (m_j+1) h(m+1/2)h_1(-m_1-1/2)\cdots h_j(-m_j-3/2) \cdots h_r(-m_r-1/2)\one \\
        = \ & \sum_{j=1}^r \sum_{i=1}^{j-1} (m_j+1) (-1)^{i-1}\delta_{m,m_i} (h, h_i)\\
        & \qquad \quad \cdot h_1(-m_1-1/2) \cdots \reallywidehat{h_i(m_i-1/2)} \cdots h_j(-m_j-3/2) \cdots h_r(-m_r-1/2)\one \\
        & + \sum_{j=1}^r (m_j+1) \delta_{m, m_j+1} (-1)^{j-1}(h, h_j) \\
        & \qquad \quad \cdot h_1(-m_1-1/2)\cdots \reallywidehat{h_j(-m_j-1/2)} \cdots h_r(-m_r-1/2)\one \\
        & + \sum_{j=1}^r \sum_{i=j+1}^r 
        (m_j+1) (-1)^{i-1}\delta_{m, m_i}(h, h_i) \\
        & \qquad \qquad \quad \cdot h_1(-m_1-1/2) \cdots h_j(-m_j-3/2) \cdots \reallywidehat{h_i(-m_i-1/2)} \cdots h_r(-m_r-1/2)\one. 
    \end{align*}
    Since 
    $$\sum_{i=1}^r \sum_{j=1}^{i-1} = \sum_{1\leq j < i \leq r} = \sum_{j=1}^r \sum_{i=j+1}^r, \sum_{i=1}^r \sum_{j=i+1}^{r} = \sum_{1\leq i < j \leq r} = \sum_{j=1}^r \sum_{i=1}^{j-1}$$
    Thus 
    \begin{align*}
        & [D, h(m+1/2)]h_1(-m_1-1/2)\cdots h_r(-m_r-1/2)\one \\
        = \ & -\sum_{j=1}^r (m_j+1)\delta_{m,m_j+1}(-1)^{j-1} (h, h_j)\\
        & \qquad \quad \cdot h_1(-m_1-1/2) \cdots \reallywidehat{h_j(-m_j-1/2)} \cdots h_r(-m_r-1/2)\\
        = \ & -\sum_{j=1}^r (m-1+1)\delta_{m-1,m_j}(-1)^{j-1} (h, h_j) \\
        & \qquad \quad \cdot h_1(-m_1-1/2) \cdots \reallywidehat{h_j(-m_j-1/2)} \cdots h_r(-m_r-1/2)\\
        = \ & -mh(m-1/2) h_1(-m_1-1/2)\cdots h_r(-m_r-1/2)
    \end{align*}
    So we checked (\ref{D-comm-1}) when $m+1/2 > 0$. 
        \item In case $m+1/2<0$, we rewrite $h(m+1/2)$ as $h(-n-1/2)$ with $n = -m -1$ and compute that
    \begin{align*}
        & D h(-n-1/2)h_1(-m_1-1/2)\cdots h_r(-m_r-1/2) \one\\
        = \ &  (n+1)h(-n-3/2)  h_1(-m_1-1/2)\cdots h_r(-m_r-1/2) \one \\
        & + h(-n-1/2)\sum_{i=1}^r (m_i+1/2) h_1(-m_1-1/2)\cdots h_i(-m_i-3/2) \cdots h_r(-m_r-1/2)\one \\
        = \ &  (n+1)h(-n-3/2)  h_1(-m_1-1/2)\cdots h_r(-m_r-1/2) \one \\
        & + h(-n-1/2)D h_1(-m_1-1/2)\cdots h_r(-m_r-1/2) \one
    \end{align*}
    Therefore, 
    \begin{align*}
        & [D, h(-n-1/2)]h_1(-m_1-1/2)\cdots h_r(-m_r-1/2)\one \\
        = \ & (n+1)h(-n-3/2)h_1(-m_1-1/2)\cdots h_r(-m_r-1/2)\one. 
    \end{align*}
    Substituting $n=-m-1$, we obtain (\ref{D-comm-1}) when $m+1/2<0$. 
    \end{itemize}

    \vskip 1em

    From (\ref{D-comm-1}), we immediately see that
    \begin{align*}
        [D, h^{(m)}(x)] = \ & \sum_{n\in \Z} [D, h(n+1/2)] \binom{-n-1} m x^{-n-m-1} =  \sum_{n\in \Z}-n h(n-1/2) \binom{-n-1} m x^{-n-m-1}\\
        = \ & \sum_{n\in \Z}(-n-1) h(n+1/2) \binom{-n-2} m x^{-n-m-2} =  \sum_{n\in \Z}(-n-m-1) h(n+1/2) \binom{-n-1} m x^{-n-m-2}\\
        = \ & \frac{d}{dx} \sum_{n\in \Z} h(n+1/2) \binom {-n-1}{m} x^{-n-1-m}   = \frac d {dx} h^{(m)}(x)
    \end{align*}
    So the $D$-commutator formula holds for $Y(h(-m-1/2)\one, x)$ with one modes. Similarly, we have
    $$[D, h^{(m)}(x)^+] = \frac{d}{dx} h^{(m)}(x)^+, [D, h^{(m)}(x)^-] = \frac{d}{dx} h^{(m)}(x)^-. $$
    
    Now we use induction to prove the $D$-commutator formula with an arbitrary number of modes. 
    \begin{align*}
        & [D, Y(h_1(-m_1-1/2)\cdots h_r(-m_r-1/2)\one, x)] \\
        = \ & [D, :h_1^{(m_1)}(x) \cdots h_r^{(m_r)}(x):]\\
        = \ & [D, h_1^{(m_1)}(x)^-:h_2^{(m_2)}(x) \cdots h_r^{(m_r)}(x):] + [D, :h_2^{(m_2)}(x) \cdots h_r^{(m_r)}(x): h_1^{(m_1)}(x)^+]\\
        = \ & [D, h_1^{(m_1)}(x)^-] :h_2^{(m_2)}(x) \cdots h_r^{(m_r)}(x): + h_1^{(m_1)}(x) [D, :h_2^{(m_2)}(x)\cdots h_r^{m_r}(x):]\\
        & + [D, :h_2^{(m_2)}(x) \cdots h_r^{(m_r)}(x):] h_1^{(m_1)}(x)^+ + :h_2^{(m_2)}(x) \cdots h_r^{(m_r)}(x):[D,  h_1^{(m_1)}(x)^+]\\
        = \ & \frac{d}{dx} \left(h_1^{(m_1)}(x)^-\right):h_2^{(m_2)}(x)\cdots h_r^{m_r}(x): + h_1^{(m_1)}(x)^-\frac{d}{dx}\left(:h_2^{(m_2)}(x)\cdots h_r^{(m_r)}(x):\right)\\
        & + \frac d{dx}\left(:h_2^{(m_2)}(x) \cdots h_r^{(m_r)}(x):\right) h_1^{(m_1)}(x)^+ +  :h_2^{(m_2)}(x) \cdots h_r^{(m_r)}(x):\frac{d}{dx}\left(h_1^{(m_1)}(x)^+\right)\\
        = \ & \frac{d}{dx} \left(h_1^{(m_1)}(x)^-:h_2^{(m_2)}(x)\cdots h_r^{m_r}(x):+:h_2^{(m_2)}(x) \cdots h_r^{(m_r)}(x): h_1^{(m_1)}(x)^+ \right)\\
        = \ & \frac{d}{dx}:h_1^{(m_1)}(x)\cdots h_r^{(m_r)}(x):. 
    \end{align*}
    \end{enumerate}
        
\end{proof}

\section{Proof of weak associativity}

In this section we prove the weak associativity by explicitly computing the product and iterates of two vertex operators. 

\subsection{Product of two normal-ordered products}

We start by the following special case. 

\begin{prop}\label{Product-1-s-prop}
    Let $a, b_1, ..., b_s \in \h, m, n_1, ..., n_s\in \N$, then 
    \begin{align}
        & a^{(m)}(x) : b_1^{(n_1)}(y_1)\cdots b_s^{(n_s)}(y_s): \nonumber\\
        = \ & :a^{(m)}(x)b_1^{(n_1)}(y_1)\cdots b_s^{(n_s)}(y_s): \nonumber\\
        & + \sum_{i=1}^{s} (-1)^{i-1} (a, b_i) \iota_{xy}f_{mn_i}(x,y) :b_1^{(n_1)}(y_1) \cdots \reallywidehat{b_i^{(n_i)}(y_i)} \cdots b_s^{(n_s)}(y_s): \label{Product-1-s-formula}
    \end{align}
\end{prop}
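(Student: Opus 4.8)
The plan is to reduce to the case $m=n_1=\cdots=n_s=0$ and then induct on $s$. By Remark~\ref{zero-suffices} normal ordering commutes with $\partial_x$ and with each $\partial_{y_i}$, and since $f_{mn_i}$ is obtained from $f$ by the very same differentiations that produce $a^{(m)}(x)$ from $a(x)$ and $b_i^{(n_i)}(y_i)$ from $b_i(y_i)$ (and $\iota_{xy}$ commutes with differentiation, being a formal expansion), applying $\tfrac{1}{m!}\partial_x^m\prod_i\tfrac{1}{n_i!}\partial_{y_i}^{n_i}$ carries the reduced identity to the general one; note that $\partial_x$ and $\partial_{y_i}$ fall only on $a(x)$ and the surviving factors in the first term, and only on $f(x,y_i)$ in the $i$-th contraction term (where $b_i$, hence $y_i$, is absent from the normal-ordered product). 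It therefore suffices to prove
\begin{align*}
a(x):b_1(y_1)\cdots b_s(y_s): = \ & :a(x)b_1(y_1)\cdots b_s(y_s): \\
& + \sum_{i=1}^s (-1)^{i-1}(a,b_i)\,\iota_{xy_i}f(x,y_i)\,:b_1(y_1)\cdots \widehat{b_i(y_i)}\cdots b_s(y_s):.
\end{align*}

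First I would split $a(x)=a(x)^-+a(x)^+$ and dispose of the regular part cheaply. Because $a(x)^-$ consists only of negative modes placed to the left of every factor, it already occupies the leftmost negative slot of the normal-ordered product, so $a(x)^-:b_1(y_1)\cdots b_s(y_s):\,=\,:a(x)^- b_1(y_1)\cdots b_s(y_s):$ with no contraction terms; by multilinearity of normal ordering this is exactly the $a(x)^-$ contribution to $:a(x)b_1(y_1)\cdots b_s(y_s):$. All the substance is thus in the singular part.

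I would prove the singular part by induction on $s$. For $s=1$ the claim reduces to $a(x)^+ b_1(y_1)=\,:a(x)^+ b_1(y_1):+(a,b_1)\iota_{xy_1}f(x,y_1)$, which follows by writing $b_1(y_1)=b_1(y_1)^-+b_1(y_1)^+$, applying the anticommutator formula (\ref{anticomm-pm}) to $a(x)^+ b_1(y_1)^-$, and observing that $a(x)^+ b_1(y_1)^+$ is already normal-ordered. For the inductive step I would pull $b_1(y_1)$ out of the product via Proposition~\ref{recurrence-nord-prop}, writing $:b_1\cdots b_s:\,=b_1(y_1)^- P+(-1)^{s-1}P\,b_1(y_1)^+$ with $P=\,:b_2(y_2)\cdots b_s(y_s):$. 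Moving $a(x)^+$ across $b_1(y_1)^-$ by (\ref{anticomm-pm}) yields precisely the $i=1$ contraction term $(a,b_1)\iota_{xy_1}f(x,y_1)\,P$ together with $-b_1(y_1)^- a(x)^+ P$; applying the induction hypothesis to $a(x)^+P=a(x)^+:b_2\cdots b_s:$ then generates the contractions for $i=2,\dots,s$. To reassemble the surviving non-contraction term I would use the auxiliary identity $:a(x)^+ b_1\cdots b_s:\,=(-1)^s:b_1\cdots b_s:\,a(x)^+$, itself the $a$-part of Proposition~\ref{recurrence-nord-prop} applied to the $(s+1)$-fold product, combined with the anticommutativity of positive modes (Proposition~\ref{pos-mode-prop}).

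The main obstacle I anticipate is the sign bookkeeping: confirming that the contraction with $b_i$ carries exactly $(-1)^{i-1}$. That sign is assembled from the $(-1)^{s-1}$ picked up when $b_1$ is pulled through the recurrence, the $(-1)^{i-2}$ coming from the induction hypothesis on the $s-1$ remaining factors, and the parity $(-1)^{s-2}$ produced when each reduced product $:b_1\cdots \widehat{b_i}\cdots b_s:$ is re-expanded by the recurrence; one must check these combine consistently for both the $b_1(y_1)^-$-on-the-left and $b_1(y_1)^+$-on-the-right halves of every term. The repeated appeal to Proposition~\ref{pos-mode-prop}, ensuring no spurious contraction arises when $a(x)^+$ passes a positive mode, is exactly what keeps the total number of contraction terms equal to the $s$ terms displayed.
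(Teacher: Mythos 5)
Your proposal is correct and follows essentially the same route as the paper's proof: reduction to the case $m=n_1=\cdots=n_s=0$ by differentiation, then induction on $s$ driven by Proposition \ref{recurrence-nord-prop}, the anticommutator formula (\ref{anticomm-pm}), and the anticommutativity of positive modes (Proposition \ref{pos-mode-prop}). Your one deviation --- splitting off $a(x)^-$ once at the start and inducting only on the statement for $a(x)^+$, so that the induction hypothesis is applied to $a(x)^+\,{:}b_2(y_2)\cdots b_s(y_s){:}$ rather than to the full product $a(x)\,{:}b_2(y_2)\cdots b_s(y_s){:}$ as in the paper --- is a mild streamlining of the same argument (it avoids the paper's intermediate terms that must later cancel), not a genuinely different method.
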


\begin{proof}
        From Remark \ref{zero-suffices}, it suffices to show the case for $m = n_1 = \cdots = n_s = 0$. We prove by induction. In case $s=1$, we have
    \begin{align*}
        & a(x) :b_1(y_1): = a(x) b_1(y_1) \\
        = \ & a(x)^- b_1(y_1) + a(x)^+ b_1(y_1)^- + a(x)^+ b_1(y_1)^+\\
        = \ & a(x)^- b_1(y_1) - b_1(y_1)^- a(x)^+ + \{a(x)^+, b_1(y_1)^-\} - b_1(y_1)^+ a(x)^+ \\
        = \ & a(x)^- b_1(y_1) - b_1(y_1) a(x)^+ + (a,b)\iota_{xy_1}f(x,y_1)\\
        = \ & :a(x)b_1(y_1): + (a,b)\iota_{xy_1}f(x,y_1).
    \end{align*}
    So the conclusion holds when $s=1$. Assume the conclusion holds for all smaller $s$. We use Proposition \ref{recurrence-nord-prop} to expand the normal-ordered product of $b_1(y_1), ..., b_s(y_s)$. So
    \begin{align}
        a(x) :b_1(y_1)\cdots b_s(y_s):  = \ & a(x) b_1(y_1)^-:b_2(y_2)\cdots b_s(y_s): + (-1)^{s-1}a(x) :b_2(y_2)\cdots b_s(y_s): b_1(y_1)^+ \nonumber\\
        = \ & a(x)^- b_1(y_1)^-:b_2(y_2)\cdots b_s(y_s): \label{Product-1-s-1}\\
        & + a(x)^+b_1(y_1)^- :b_2(y_2)\cdots b_s(y_s): \label{Product-1-s-2}\\
        & + (-1)^{s-1}a(x) :b_2(y_2)\cdots b_s(y_s): b_1(y_1)^+\label{Product-1-s-3}
    \end{align}
    We first focus on (\ref{Product-1-s-2}). 
    \begin{align}
        (\ref{Product-1-s-2}) = \ &  - b_1(y_1)^- a(x)^+:b_2(y_2)\cdots b_s(y_s): + (a,b_1)\iota_{xy_1}f(x,y_1):b_2(y_2)\cdots b_s(y_s):\nonumber\\
        = \ & -b_1(y_1)^- a(x):b_2(y_2)\cdots b_s(y_s):\label{Product-1-s-2-0}\\
        & + b_1(y_1)^- a(x)^-:b_2(y_2)\cdots b_s(y_s): + (a,b_1)\iota_{xy_1}f(x,y_1):b_2(y_2)\cdots b_s(y_s):  \nonumber
    \end{align}
    Apply induction to handle (\ref{Product-1-s-2-0}), 
    \begin{align}
        (\ref{Product-1-s-2}) = \ & -b_1(y_1)^- :a(x) b_2(y_2)\cdots b_s(y_s):\label{Product-1-s-2-1}\\
        & -b_1(y_1)^- \sum_{j=2}^s (-1)^j (a,b_j)\iota_{xy_j}f(x,y_j) :b_2(y_2)\cdots \reallywidehat{b_j(y_j)} \cdots b_s(y_s): \label{Product-1-s-2-2}\\
        &  +b_1(y_1)^- a(x)^-:b_2(y_2)\cdots b_s(y_s):\label{Product-1-s-2-3}\\
        & + (a,b_1)\iota_{xy_1}f(x,y_1):b_2(y_2)\cdots b_s(y_s):  \label{Product-1-s-2-4}
    \end{align}
    We now focus on (\ref{Product-1-s-3}).   
    Apply the induction hypothesis, we have
    \begin{align}
        (\ref{Product-1-s-3}) = \ & (-1)^{s-1}:a(x) b_2(y_2)\cdots b_s(y_s): b_1(y_1)^+ \label{Product-1-s-3-1}\\
        & + (-1)^{s-1} \sum_{j=2}^s (-1)^{j} (a,b_j)\iota_{xy_j}f(x,y_j):b_2^{n_2}(y_2)\cdots \reallywidehat{b_j(y_j)}\cdots b_s(y_s):b_1(y_1)^+ \label{Product-1-s-3-2}
    \end{align}
    Therefore, $(\ref{Product-1-s-1})+(\ref{Product-1-s-2})+(\ref{Product-1-s-3}) = (\ref{Product-1-s-1})+(\ref{Product-1-s-2-1})+(\ref{Product-1-s-2-2})+(\ref{Product-1-s-2-3})+(\ref{Product-1-s-2-4}) + (\ref{Product-1-s-3-1})+ (\ref{Product-1-s-3-2})$. Note that 
    \begin{align}
        & (\ref{Product-1-s-2-4})+ (\ref{Product-1-s-2-2}) + (\ref{Product-1-s-3-2}) \nonumber\\
        = \ & (a, b_1) \iota_{xy_1}f(x,y_1):b_2(y_2) \cdots b_s(y_s): \nonumber 
        \\
        & -\sum_{j-2}(-1)^j (a,b_j)\iota_{xy_j}f(x,y_j) \nonumber\\
        & \quad \cdot \left( b_1(y_1)^-:b_2(y_2)\cdots \reallywidehat{b_j(y_j)}b_s(y_s): + (-1)^{s-2} b_2(y_2)\cdots \reallywidehat{b_j(y_j)}b_s(y_s): b_1(y_1)^+\right)\nonumber \\
        = \ & (a, b_1) \iota_{xy_1}f(x,y_1):b_2(y_2) \cdots b_s(y_s):\nonumber\\
        & + \sum_{j-2}(-1)^{j-1} (a,b_j)\iota_{xy_j}f(x,y_j) :b_1(y_1)\cdots \reallywidehat{b_j(y_j)}\cdots b_s(y_s): \nonumber\\
        = \ & \sum_{j-1}(-1)^{j-1} (a,b_j)\iota_{xy_j}f(x,y_j) :b_1(y_1)\cdots \reallywidehat{b_j(y_j)}\cdots b_s(y_s):.\label{Product-1-s-4}
    \end{align}
    Note also that 
    \begin{align}
        & (\ref{Product-1-s-1}) + (\ref{Product-1-s-2-1})+(\ref{Product-1-s-2-3})+(\ref{Product-1-s-3-1})\nonumber \\
        = \ & a(x)^- b_1(y_1)^-:b_2(y_2)\cdots b_s(y_s):  -b_1(y_1)^- :a(x) b_2(y_2)\cdots b_s(y_s):\nonumber\\
        & +b_1(y_1)^- a(x)^-:b_2(y_2)\cdots b_s(y_s): +(-1)^{s-1}:a(x) b_2(y_2)\cdots b_s(y_s): b_1(y_1)^+\nonumber\\
        = \ & a(x)^- b_1(y_1)^-:b_2(y_2)\cdots b_s(y_s): \nonumber\\
        & -b_1(y_1)^- \left(a(x)^-: b_2(y_2)\cdots b_s(y_s):-(-1)^{s-1}: b_2(y_2)\cdots b_s(y_s):a(x)^+\right)\nonumber\\
        & +b_1(y_1)^- a(x)^-:b_2(y_2)\cdots b_s(y_s):\nonumber\\
        & +(-1)^{s-1}\left(a(x)^- :b_2(y_2)\cdots b_s(y_s): b_1(y_1)^+ + (-1)^{s-1} :b_2(y_2)\cdots b_s(y_s): a(x)^+b_1(y_1)^+\right)\nonumber\\
        = \ & a(x)^- b_1(y_1)^-:b_2(y_2)\cdots b_s(y_s):  -(-1)^{s-1} b_1(y_1)^-:b_2(y_2)\cdots b_s(y_s):a(x)^+\nonumber\\
        & +(-1)^{s-1}a(x)^- :b_2(y_2)\cdots b_s(y_s): b_1(y_1)^+ - :b_2(y_2)\cdots b_s(y_s): b_1(y_1)^+a(x)^+\nonumber \\
        = \ & a(x)^- :b_1(y_1)b_2(y_2)\cdots b_s(y_s): -(-1)^{s-1} :b_1(y_1)b_2(y_2)\cdots b_s(y_s):a(x)^+\nonumber\\
        = \ & :a(x)b_1(y_1)b_2(y_2)\cdots b_s(y_s): \label{Product-1-s-5}
    \end{align}
    Thus we managed to show that the left-hand-side of (\ref{Product-1-s-formula}) is precisely $(\ref{Product-1-s-4}) + (\ref{Product-1-s-5})$, which is precisely the right-hand-side of (\ref{Product-1-s-formula}). 
\end{proof}

\begin{prop}\label{Product-r-1-prop}
    Let $a_1, ..., a_r, b\in \h, m_1, ..., m_r, n\in \N$, then 
    \begin{align}
        & :a_1^{(m_1)}(x_1)\cdots a_r^{(m_r)}(x_r) : b^{(n)}(y) \nonumber\\
        = \ & :a^{(m_1)}(x_1)\cdots a_r^{(m_r)}(x_r)b^{(n)}(y): \nonumber\\
        & + \sum_{i=1}^{r} (-1)^{i-r} (a_i, b) \iota_{xy}f_{m_in}(x,y) :a_1^{(m_1)}(x_1) \cdots \reallywidehat{a_i^{(m_i)}(x_i)} \cdots a_r^{(n_r)}(x_r): \label{Product-r-1-formula}
    \end{align}
\end{prop}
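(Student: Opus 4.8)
The plan is to imitate the proof of Proposition \ref{Product-1-s-prop}, inducting on $r$ and using the left-hand recurrence of Proposition \ref{recurrence-nord-prop}, the anticommutativity of positive modes (Proposition \ref{pos-mode-prop}), and the anticommutator formula (\ref{anticomm-pm}). By Remark \ref{zero-suffices} it suffices to treat the case $m_1 = \cdots = m_r = n = 0$, the general case following by applying $\tfrac{1}{m_1!}\partial_{x_1}^{m_1}\cdots\tfrac{1}{n!}\partial_y^{n}$ to both sides. For the base case $r = 1$ I would compute $a_1(x_1)b(y) - {:}a_1(x_1)b(y){:} = \{a_1(x_1)^+, b(y)\}$, whose positive-positive part vanishes by Proposition \ref{pos-mode-prop} and whose positive-negative part equals $(a_1,b)\iota_{x_1y}f(x_1,y)$ by (\ref{anticomm-pm}); this is exactly (\ref{Product-r-1-formula}) for $r=1$.

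For the inductive step, first I would peel $a_1$ off the \emph{left} via Proposition \ref{recurrence-nord-prop}, writing ${:}a_1(x_1)\cdots a_r(x_r){:}\,b(y)$ as $a_1(x_1)^-\big({:}a_2(x_2)\cdots a_r(x_r){:}\,b(y)\big) + (-1)^{r-1}\big({:}a_2(x_2)\cdots a_r(x_r){:}\big)a_1(x_1)^+ b(y)$. The decisive step is to push $a_1(x_1)^+$ to the right of $b(y)$: combining $a_1(x_1)^+b(y)^+ = -b(y)^+a_1(x_1)^+$ (Proposition \ref{pos-mode-prop}) with $a_1(x_1)^+b(y)^- = -b(y)^-a_1(x_1)^+ + (a_1,b)\iota_{x_1y}f(x_1,y)$ (from (\ref{anticomm-pm})) gives $a_1(x_1)^+b(y) = -b(y)a_1(x_1)^+ + (a_1,b)\iota_{x_1y}f(x_1,y)$. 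Substituting this and then applying the induction hypothesis to the $(r-1)$-fold product ${:}a_2(x_2)\cdots a_r(x_r){:}\,b(y)$ --- whose contraction at $a_i$ ($i = 2,\ldots,r$) already carries the sign $(-1)^{i-r}$, since $a_i$ must hop over the $r-i$ operators to its right --- reduces everything to $a_1(x_1)^{\pm}$ acting on the normal-ordered products ${:}a_2\cdots a_r b{:}$ and ${:}a_2\cdots\widehat{a_i}\cdots a_r{:}$.

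Finally I would reassemble using Proposition \ref{recurrence-nord-prop} in reverse. The two uncontracted pieces $a_1(x_1)^-{:}a_2\cdots a_r b{:} + (-1)^{r}{:}a_2\cdots a_r b{:}\,a_1(x_1)^+$ recombine, via the recurrence for the $(r+1)$-fold product, into ${:}a_1 a_2\cdots a_r b{:}$, the leading term of (\ref{Product-r-1-formula}). For each $i\in\{2,\ldots,r\}$ the bracket $a_1(x_1)^-{:}a_2\cdots\widehat{a_i}\cdots a_r{:} + (-1)^{r}{:}a_2\cdots\widehat{a_i}\cdots a_r{:}\,a_1(x_1)^+$ recombines via the recurrence for the $(r-1)$-fold product (whose sign is $(-1)^{(r-1)-1} = (-1)^{r}$, as needed) into ${:}a_1 a_2\cdots\widehat{a_i}\cdots a_r{:}$, yielding precisely the $i$-th summand. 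The $i=1$ summand, with sign $(-1)^{1-r} = (-1)^{r-1}$, is exactly the contraction term produced by the commutation step.

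The hard part will be the sign bookkeeping: one must verify that the three sources of parity --- the $(-1)^{r-1}$ from the left-recurrence, the sign from anticommuting $a_1(x_1)^+$ past $b(y)$, and the $(-1)^{i-r}$ from the induction hypothesis --- conspire to produce the single uniform factor $(-1)^{i-r}$ for all $i$, and in particular that the $i=1$ term (arising from the commutation) matches seamlessly with the $i\ge 2$ terms (arising from induction). This left/right asymmetry is precisely why a separate statement is required: the recurrence of Proposition \ref{recurrence-nord-prop} peels operators only from the left, so every reassembly --- and every occurrence of the nontrivial sign $(-1)^{i-r}$ rather than the $(-1)^{i-1}$ of Proposition \ref{Product-1-s-prop} --- must be routed through that one-sided recurrence.
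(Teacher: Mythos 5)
Your proposal is correct and follows essentially the same route as the paper's proof: reduce to $m_1=\cdots=m_r=n=0$ via Remark \ref{zero-suffices}, peel $a_1$ off the left with Proposition \ref{recurrence-nord-prop}, push $a_1(x_1)^+$ past $b(y)$ using Proposition \ref{pos-mode-prop} and (\ref{anticomm-pm}) to extract the $i=1$ contraction, apply the induction hypothesis to $:a_2\cdots a_r:\,b(y)$, and reassemble via the recurrence. Your sign bookkeeping (the $(-1)^r$ from both the $(r+1)$-fold and $(r-1)$-fold recurrences, and the $(-1)^{i-r}$ from the induction hypothesis) matches the paper's computation exactly.
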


\begin{proof}
    The proof is significantly simpler than that for Proposition \ref{Product-r-1-prop}.  From Remark \ref{zero-suffices}, it suffices to show the case for $m_1 = \cdots = m_r = n= 0$. We prove by induction. The case $r=1$ is clear from Proposition \ref{Product-r-1-prop}. 
    Assume the conclusion holds for all smaller $r$. We use Proposition \ref{recurrence-nord-prop} to expand the normal ordering of $a_1(x_1)\cdots a_r^{(n_r)}(x_r)$. So
    \begin{align}
        & :a_1(x_1)\cdots a_r(x_r): b^{(n)}(y) \nonumber\\
        = \ & a_1(x_1)^-:a_2(x_2)\cdots a_r(x_r): b(y) + (-1)^{r-1} :a_2(x_2)\cdots a_r(x_r): a_1(x_1)^+ b(y)\nonumber\\
        = \ & a_1(x_1)^-:a_2(x_2)\cdots a_r(x_r): b(y) \nonumber\\
        & + (-1)^{r-1} :a_2(x_2)\cdots a_r(x_r): a_1(x_1)^+ b(y)^+ + (-1)^{r-1} :a_2(x_2)\cdots a_r(x_r): a_1(x_1)^+ b(y)^-  \nonumber\\
        = \ & a_1(x_1)^-:a_2(x_2)\cdots a_r(x_r): b(y) \nonumber\\
        & + (-1)^{r} :a_2(x_2)\cdots a_r(x_r): b(y)^+a_1(x_1)^+  + (-1)^{r} :a_2(x_2)\cdots a_r(x_r): b(y)^-a_1(x_1)^+   \nonumber\\
        & + (-1)^{r-1}:a_2(x_2)\cdots a_r(x_r): (a_1, b)\iota_{x_1y}f(x_1,y)\nonumber \\
        = \ & a_1(x_1)^-:a_2(x_2)\cdots a_r(x_r): b(y) \label{Product-r-1-1} + (-1)^{r} :a_2(x_2)\cdots a_r(x_r): b(y)a_1(x_1)^+  \\
        & + (-1)^{r-1}(a_1, b)\iota_{x_1y}f(x_1,y) :a_2(x_2)\cdots a_r(x_r):\label{Product-r-1-2}
    \end{align}
    Apply induction on (\ref{Product-r-1-1}) to see that
    \begin{align}
        (\ref{Product-r-1-1}) = \ & a_1(x_1)^- :a_2(x_2)\cdots a_r(x_r) b(y): \nonumber \\
        & + \sum_{i=2}^r (-1)^{i-1-(r-1)}(a_i, b)\iota_{x_iy}f(x_i, y) a_1(x_1)^- :a_2(x_2) \cdots \reallywidehat{a_i(x_i)}\cdots a_r(x_r): \nonumber \\
        & + (-1)^r :a_2(x_2)\cdots a_r(x_r) b(y): a_1(x_1)^+  \nonumber \\
        & + (-1)^r \sum_{i=2}^r (-1)^{i-1-(r-1)}(a_i, b)\iota_{x_iy}f(x_i, y)  :a_2(x_2) \cdots \reallywidehat{a_i(x_i)}\cdots a_r(x_r): a_1(x_1)^+\nonumber \\
        = \ & :a_1(x_1)a_2(x_2)\cdots a_r(x_r) b(y): \label{Product-r-1-3}\\
        & +  \sum_{i=2}^r (-1)^{i-r}(a_i, b)\iota_{x_iy}f(x_i, y)  :a_1(x_1)a_2(x_2) \cdots \reallywidehat{a_i(x_i)}\cdots a_r(x_r): \label{Product-r-1-4}
    \end{align}
    So the left-hand-side of (\ref{Product-r-1-formula}) is precisely $(\ref{Product-r-1-1}) + (\ref{Product-r-1-3}) + (\ref{Product-r-1-4})$, which is precisely the right-hand-side of (\ref{Product-r-1-formula}). 
\end{proof}

Now we study the general case
\begin{thm}\label{Product-r-s-thm}
    Let $a_1, ..., a_r, b_1, ..., b_s\in \h, m_1, ..., m_r, n_1, ..., n_s\in \N$. Then 
    \begin{align}
        & :a_1^{(m_1)}(x_1)\cdots a_r^{(m_r)}(x_r): \cdot :b_1^{(n_1)}(y_1)\cdots b_s^{(n_s)}(y_s): \nonumber \\
        = \ & :a_1^{(m_1)}(x_1)\cdots a_r^{(m_r)}(x_r)b_1^{(n_1)}(y_1)\cdots b_s^{(n_s)}(y_s):\nonumber \\
        & + \sum_{\rho=1}^{\min(r,s)}\sum_{\substack{ 1 \leq i_1 < \cdots < i_\rho\leq r\\ 1 \leq j_1 < \cdots < j_\rho \leq s}}(-1)^{i_1+\cdots + i_\rho + j_1+\cdots + j_\rho }(-1)^{r\rho + \rho(\rho+1)/2}\nonumber \\
        & \hspace{9.7 em}\cdot \begin{vmatrix}
            (a_{i_1}, b_{j_1})f_{m_{i_1}n_{j_1}}(x_{i_1}, y_{j_1}) & \cdots & (a_{i_1}, b_{j_\rho})f_{m_{i_1}n_{j_\rho}}(x_{i_1}, y_{j_\rho})\\
            \vdots & & \vdots\nonumber  \\
            (a_{i_\rho}, b_{j_1})f_{m_{i_\rho}n_{j_1}}(x_{i_\rho}, y_{j_1}) & \cdots & (a_{i_\rho}, b_{j_\rho})f_{m_{i_\rho}n_{j_\rho}}(x_{i_\rho}, y_{j_\rho})
        \end{vmatrix}\nonumber \\
        & \hspace{10 em}\cdot :a_{1}^{(m_1)}(x_1) \cdots \reallywidehat{a_{i_1}^{(m_{i_1})}(x_{i_1})} \cdots \reallywidehat{a_{i_\rho}^{(m_{i_\rho})}(x_{i_\rho})} \cdots a_{r}^{(m_r)}(x_r)\nonumber  \\
        & \hspace{11 em}\cdot b_{1}^{(n_1)}(y_1) \cdots \reallywidehat{b_{j_1}^{(n_{j_1})}(y_{j_1})} \cdots \reallywidehat{b_{j_\rho}^{(n_{j_\rho})}(y_{j_\rho})} \cdots b_{s}^{(n_s)}(y_s):\label{Product-r-s-formula}     
    \end{align}
\end{thm}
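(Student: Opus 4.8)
The plan is to prove the identity by induction on $r$ (the number of factors in the left-hand normal-ordered product), uniformly in $s$. By Remark \ref{zero-suffices} it suffices to establish the case $m_1=\cdots=m_r=n_1=\cdots=n_s=0$: applying $\frac{1}{m_i!}\partial_{x_i}^{m_i}$ and $\frac{1}{n_j!}\partial_{y_j}^{n_j}$ to both sides recovers the general case, since normal ordering commutes with each $\partial_{x_i}$, since each $x_i$ (resp. $y_j$) occurs in only one row (resp. column) of the contraction matrix so that multilinearity of the determinant lets the derivatives act entrywise, and since $\frac{1}{m!n!}\partial_x^m\partial_y^n f=f_{mn}$. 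The base case $r=1$ is exactly Proposition \ref{Product-1-s-prop}: the outer sum collapses to $\rho=1$, each $1\times 1$ determinant is the single contraction $(a_1,b_j)f(x_1,y_j)$, and the sign $(-1)^{1+j}(-1)^{1+1}=(-1)^{j-1}$ matches.

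For the inductive step I would apply the recurrence of Proposition \ref{recurrence-nord-prop} to split off the first factor,
\begin{align*}
:a_1(x_1)\cdots a_r(x_r): \ = \ & a_1(x_1)^-\,:a_2(x_2)\cdots a_r(x_r): \\
& + (-1)^{r-1}:a_2(x_2)\cdots a_r(x_r):\,a_1(x_1)^+,
\end{align*}
multiply on the right by $:b_1(y_1)\cdots b_s(y_s):$, and move the annihilation part $a_1(x_1)^+$ through the second normal-ordered product using the computation of Proposition \ref{Product-1-s-prop} (all of whose contraction terms come from the annihilation part). This produces three groups of terms: one carrying $a_1(x_1)^-$ on the far left, one carrying $a_1(x_1)^+$ absorbed into the second normal ordering, and a family of terms $(-1)^{r-1}(-1)^{j-1}(a_1,b_j)\,\iota_{x_1y_j}f(x_1,y_j)$ in which $a_1$ has been contracted against $b_j$ and the surviving operators form $:a_2(x_2)\cdots a_r(x_r):\cdot:b_1(y_1)\cdots\widehat{b_j(y_j)}\cdots b_s(y_s):$.

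To each of these products of two normal-ordered products — now with $r-1$ $a$-factors — I would apply the induction hypothesis. In the contracted family this yields the $(\rho-1)\times(\rho-1)$ minors obtained by deleting $a_1$'s row and $b_j$'s column; multiplying by the extracted factor $(a_1,b_j)f(x_1,y_j)$ and summing over $j$ is precisely the Laplace expansion along the first row of the $\rho\times\rho$ determinant, and it reconstructs exactly the terms with $i_1=1$. In the two remaining groups, the creation piece $a_1(x_1)^-$ and the absorbed annihilation piece $a_1(x_1)^+$ reassemble into a single factor $a_1(x_1)$ inside the outer normal ordering, giving the terms with $1\notin\{i_1,\dots,i_\rho\}$; here I use Proposition \ref{pos-mode-prop}, which guarantees that a creation-only or annihilation-only half-factor anticommutes freely past the like-type operators and is never itself contracted, so the induction hypothesis applies verbatim with such a half-factor merely carried along.

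The crux — and essentially the entire content — is the sign bookkeeping. Four sources of signs must be reconciled into the single prefactor $(-1)^{i_1+\cdots+i_\rho+j_1+\cdots+j_\rho}(-1)^{r\rho+\rho(\rho+1)/2}$: the $(-1)^{r-1}$ from the recurrence, the $(-1)^{j-1}$ from Proposition \ref{Product-1-s-prop}, the relabeling incurred when the induction hypothesis (stated with $r$ replaced by $r-1$ and the surviving $a$-indices $2,\dots,r$ shifted down by one) is rewritten in the original indices, and the Laplace cofactor sign of the first-row expansion. I would check that incrementing $\rho$ and $r$ changes the exponent $r\rho+\rho(\rho+1)/2$ by exactly the amount supplied by these factors, and that the $j$-dependence from Proposition \ref{Product-1-s-prop} matches the cofactor sign $(-1)^{1+l}$ attached to the $(1,l)$-entry of the determinant. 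The operator manipulations themselves are routine given Propositions \ref{recurrence-nord-prop}, \ref{Product-1-s-prop} and \ref{pos-mode-prop}; it is this reconciliation of signs that carries the difficulty.
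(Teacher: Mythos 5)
Your proposal is correct, and its inductive step takes a genuinely different route from the paper's. The paper peels off the \emph{last} factor as a whole operator: it inverts Proposition \ref{Product-r-1-prop} to write $:a_1(x_1)\cdots a_{r+1}(x_{r+1}):$ as $:a_1(x_1)\cdots a_r(x_r):\,a_{r+1}(x_{r+1})$ minus $a$--$a$ contraction terms, and then absorbs $a_{r+1}(x_{r+1})$ into the $b$-block via Proposition \ref{Product-1-s-prop}. Because $a_{r+1}$ then sits inside the enlarged right-hand block, the induction hypothesis generates further $a_i$--$a_{r+1}$ contractions, and a substantial portion of the paper's proof consists of verifying that these spurious terms cancel against the ones created at the first step (the cancellation of (\ref{Prod-Line-1-2}) against (\ref{Prod-Line-3})). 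Your decomposition --- the \emph{first} factor split into creation/annihilation halves via Proposition \ref{recurrence-nord-prop}, with $a_1(x_1)^-$ parked on the far left and $a_1(x_1)^+$ contracting only against the $b_j$'s --- never produces $a$--$a$ contractions, so that entire cancellation analysis disappears and Proposition \ref{Product-r-1-prop} is not needed at all; a further small dividend is that the upper limit $\rho\le\min(r,s)$, which forces the paper's case split $r+1\le s$ versus $r\ge s$, comes out automatically from the ranges of your two applications of the induction hypothesis. Two points to tighten your sketch. First, the reassembly step is exactly Proposition \ref{recurrence-nord-prop} read in reverse, which is the citation you want rather than Proposition \ref{pos-mode-prop}: every term $Y$ produced by the induction hypothesis is a normal-ordered product missing an even number $2\rho$ of factors, so the needed sign $(-1)^{(r-1)+s-2\rho}=(-1)^{r-1+s}$ is uniform across all terms and $a_1(x_1)^-Y+(-1)^{r-1+s}\,Y\,a_1(x_1)^+\,=\,:a_1(x_1)Y:$ holds term by term. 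Second, the sign reconciliation you defer does close: for the uncontracted terms, relabeling $\{2,\dots,r\}\to\{1,\dots,r-1\}$ gives $(-1)^{(i_1-1)+\cdots+(i_\rho-1)}(-1)^{(r-1)\rho}=(-1)^{i_1+\cdots+i_\rho}(-1)^{r\rho}$, so the target prefactor is reproduced verbatim; for the contracted family, the prefactor $(-1)^{r-1}(-1)^{j_l-1}$, the downward shift of the surviving indices past $a_1$ and past $b_{j_l}$, and the hypothesis exponent $(r-1)(\rho-1)+\rho(\rho-1)/2$ recombine, using $\rho(\rho+1)/2=\rho(\rho-1)/2+\rho$, into the target exponent $r\rho+\rho(\rho+1)/2$ times precisely the cofactor sign $(-1)^{1+l}$ of the $(1,l)$-entry, confirming your Laplace-expansion picture.
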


\begin{proof}
From Remark \ref{zero-suffices}, it suffices to argue the case when $m_1 = \cdots = m_r = n_1 = \cdots = n_s = 0$. The conclusion clearly holds for $r=1$ and arbitrary $s$. Assume the conclusion holds for $r$ and all smaller $r$ and arbitrary $s$, we argue the $r+1$ case. First assume $r+1\leq s$
\begin{align}
    & : a_1(x_1) \cdots a_r(x_r) a_{r+1}(x_{r+1}) :\cdot : b_1(y_1) \cdots b_s(y_s) : \nonumber\\
    = & : a_1(x_1) \cdots a_r(x_r): a_{r+1}(x_{r+1})  : b_1(y_1) \cdots b_s(y_s) :\nonumber\\
    & - \sum_{i=1}^r (-1)^{r-i} (a_i, a_{r+1}) f(x_i, x_{r+1}) : a_1(x_1) \cdots \reallywidehat{a_i(x_i)} \cdots a_r(x_r): : b_1(y_1) \cdots b_s(y_s):\nonumber\\
     = & : a_1(x_1) \cdots a_r(x_r): : a_{r+1}(x_{r+1})   b_1(y_1) \cdots b_s(y_s) : \label{Prod-Line-1}\\
    & + \sum_{j=1}^s (-1)^{j-1} (a_{r+1}, b_j)f(x_{r+1}, y_j) : a_1(x_1)\cdots a_r(x_r): : b_1(y_1) \cdots \reallywidehat{b_j(y_j)} \cdots b_s(y_s):  \label{Prod-Line-2}\\ 
    & - \sum_{i=1}^r (-1)^{r-i} (a_i, a_{r+1}) f(x_i, x_{r+1}) : a_1(x_1) \cdots \reallywidehat{a_i(x_i)} \cdots a_r(x_r): : b_1(y_1) \cdots b_s(y_s) : \label{Prod-Line-3}
\end{align}
By induction hypothesis, (\ref{Prod-Line-1}) is equal to 
\begin{align}
    : a_1(x_1) \cdots a_{r}(x_r)a_{r+1}(x_{r+1})\cdots b_1(y_1)\cdots b_s(y_s): \label{Prod-Fully-Normal}
\end{align}together with the following correction terms: 
\begin{align}
    \sum_{\rho=1}^{r} \sum_{1\leq i_1 < \cdots < i_\rho \leq r} \sum_{1\leq j_1 < \cdots < j_\rho \leq s} & (-1)^{i_1+\cdots +i_\rho + j_1 + 1+ \cdots + j_\rho + 1}(-1)^{r\rho+\frac{\rho(\rho+1)}2}\nonumber\\
    \cdot & \begin{vmatrix}
    (a_{i_1}, b_{j_1})f(x_{i_1}, y_{j_1}) & \cdots & (a_{i_1}, b_{j_\rho})f(x_{i_1}, y_{j_\rho}) \nonumber\\
    \vdots & & \vdots \nonumber\\
    (a_{i_\rho}, b_{j_1})f(x_{i_\rho}, y_{j_1}) & \cdots & (a_{i_1}, b_{j_\rho})f(x_{i_1}, y_{j_\rho}) 
    \end{vmatrix}\nonumber\\
    \cdot & : a_1(x_1) \cdots \reallywidehat{a_{i_1}(x_{i_1})} \cdots  \reallywidehat{a_{i_\rho}(x_{i_\rho})} \cdots a_r(x_r) a_{r+1}(x_{r+1})\nonumber\\
    & \quad  \cdot b_1(y_1) \cdots \reallywidehat{b_{j_1}(y_{j_1})} \cdots \reallywidehat{b_{j_\rho}(y_{j_\rho})} \cdots b_s(y_s) : \nonumber\\
    %================================  
     \quad +\sum_{\rho=1}^{r} \sum_{1\leq i_1 < \cdots < i_\rho \leq r} \sum_{1\leq j_2 < \cdots < j_\rho \leq s} & (-1)^{i_1+\cdots +i_\rho + 1 + j_2 + 1+ \cdots + j_\rho + 1}(-1)^{r\rho+\frac{\rho(\rho+1)}2}\nonumber\\
    \cdot & \begin{vmatrix}
    (a_{i_1}, a_{r+1})f(x_{i_1}, x_{r+1}) & (a_{i_1}, b_{j_2})f(x_{i_1}, y_{j_2}) & \cdots & (a_{i_1}, b_{j_\rho})f(x_{i_1}, y_{j_\rho}) \\
    \vdots & \vdots & & \vdots \\
    (a_{i_\rho}, a_{r+1})f(x_{i_\rho}, x_{r+1})& (a_{i_\rho}, b_{j_2})f(x_{i_\rho}, y_{j_2}) & \cdots & (a_{i_1}, b_{j_\rho})f(x_{i_1}, y_{j_\rho}) 
    \end{vmatrix}\nonumber\\
    \cdot & : a_1(x_1) \cdots \reallywidehat{a_{i_1}(x_{i_1})} \cdots  \reallywidehat{a_{i_\rho}(x_{i_\rho})} \cdots a_r(x_r) \nonumber\\
    & \quad \cdot b_1(y_1) \cdots \reallywidehat{b_{j_2}(y_{j_2})} \cdots \reallywidehat{b_{j_\rho}(y_{j_\rho})} \cdots b_s(y_s) : \nonumber\\
    %======================================
    = \sum_{\rho=1}^{r} \sum_{1\leq i_1 < \cdots < i_\rho \leq r} \sum_{1\leq j_1 < \cdots < j_\rho \leq s} & (-1)^{i_1+\cdots +i_\rho + j_1 + \cdots + j_\rho }(-1)^{(r+1)\rho+\frac{\rho(\rho+1)}2}\nonumber\\
    \cdot & \begin{vmatrix}
    (a_{i_1}, b_{j_1})f(x_{i_1}, y_{j_1}) & \cdots & (a_{i_1}, b_{j_\rho})f(x_{i_1}, y_{j_\rho}) \nonumber\\
    \vdots & & \vdots \nonumber\\
    (a_{i_\rho}, b_{j_1})f(x_{i_\rho}, y_{j_1}) & \cdots & (a_{i_1}, b_{j_\rho})f(x_{i_1}, y_{j_\rho}) 
    \end{vmatrix}\nonumber\\
    \cdot & : a_1(x_1) \cdots \reallywidehat{a_{i_1}(x_{i_1})} \cdots  \reallywidehat{a_{i_\rho}(x_{i_\rho})} \cdots a_r(x_r) a_{r+1}(x_{r+1})\nonumber\\
    & \quad \cdot b_1(y_1) \cdots \reallywidehat{b_{j_1}(y_{j_1})} \cdots \reallywidehat{b_{j_\rho}(y_{j_\rho})} \cdots b_s(y_s) : \label{Prod-Line-1-1} \\
    %================================  
    \quad +\sum_{\rho=1}^{r} \sum_{1\leq i_1 < \cdots < i_\rho \leq r} \sum_{1\leq j_2 < \cdots < j_\rho \leq s} & (-1)^{i_1+\cdots +i_\rho +  j_2 + \cdots + j_\rho }(-1)^{(r+1)\rho+\frac{\rho(\rho+1)}2+\rho}\nonumber\\
    \cdot & \begin{vmatrix}
    (a_{i_1}, a_{r+1})f(x_{i_1}, x_{r+1}) & (a_{i_1}, b_{j_2})f(x_{i_1}, y_{j_2}) & \cdots & (a_{i_1}, b_{j_\rho})f(x_{i_1}, y_{j_\rho}) \\
    \vdots & \vdots & & \vdots \\
    (a_{i_\rho}, a_{r+1})f(x_{i_\rho}, x_{r+1})& (a_{i_\rho}, b_{j_2})f(x_{i_\rho}, y_{j_2}) & \cdots & (a_{i_1}, b_{j_\rho})f(x_{i_1}, y_{j_\rho}) 
    \end{vmatrix}\nonumber\\
    \cdot & : a_1(x_1) \cdots \reallywidehat{a_{i_1}(x_{i_1})} \cdots  \reallywidehat{a_{i_\rho}(x_{i_\rho})} \cdots a_r(x_r) \nonumber\\
    & \quad \cdot b_1(y_1) \cdots \reallywidehat{b_{j_2}(y_{j_2})} \cdots \reallywidehat{b_{j_\rho}(y_{j_\rho})} \cdots b_s(y_s) : \label{Prod-Line-1-2}
\end{align}
We show that (\ref{Prod-Line-1-2}) cancels out with (\ref{Prod-Line-3}). By the induction hypothesis, (\ref{Prod-Line-3}) is expressed as 
\begin{align}
    % -\sum_{k=1}^r &(-1)^{r-k} (a_k, a_{r+1})f(x_k, x_{r+1}) \nonumber\\
    % \cdot & {\bigg (}: a_1(x_1) \cdots \reallywidehat{a_k(x_k)} \cdots a_r(x_r) b_1(y_1)\cdots b_s(y_s): \nonumber\\
    % & \quad - \sum_{\rho=1}^{r-1}\sum_{\substack{i_1< \cdots < i_\rho\\ i_1, ..., i_\rho\in \{1, ..., \widehat{k}, ..., r\}}}\sum_{1\leq j_1 < \cdots < j_\rho \leq s} (-1)^{i_1+ \cdots + i_\rho + j_1 + \cdots + j_\rho} (-1)^{m(r-1,s,\rho)}\nonumber\\
    % & \qquad \qquad \qquad \qquad \qquad \qquad \qquad \qquad \cdot 
    % \begin{vmatrix}
    % (a_{i_1}, b_{j_1})f(x_{i_1}, y_{j_1}) & \cdots & (a_{i_1}, b_{j_\rho})f(x_{i_1}, y_{j_\rho}) \nonumber\\
    % \vdots & & \vdots \nonumber\\
    % (a_{i_\rho}, b_{j_1})f(x_{i_\rho}, y_{j_1}) & \cdots & (a_{i_1}, b_{j_\rho})f(x_{i_1}, y_{j_\rho}) 
    % \end{vmatrix}\nonumber\\
    % &\qquad \qquad \qquad \qquad \qquad \qquad \qquad \qquad \cdot : a_1(x_1) \cdots \reallywidehat{a_{i_1}(x_{i_1})} \cdots  \reallywidehat{a_{i_\rho}(x_{i_\rho})} \cdots a_r(x_r) \nonumber\\
    % & \qquad \qquad \qquad \qquad \qquad \qquad \qquad \qquad \quad \cdot b_1(y_1) \cdots \reallywidehat{b_{j_2}(y_{j_2})} \cdots \reallywidehat{b_{j_\rho}(y_{j_\rho})} \cdots b_s(y_s) : {\bigg)}\nonumber\\
%=================================
    -\sum_{k=1}^r &(-1)^{r-k} (a_k, a_{r+1})f(x_k, x_{r+1}): a_1(x_1) \cdots \reallywidehat{a_k(x_k)} \cdots a_r(x_r) b_1(y_1)\cdots b_s(y_s):\nonumber\\
    %=======================================
    \quad -\sum_{k=1}^r &(-1)^{r-k} (a_k, a_{r+1})f(x_i, x_{r+1}) \sum_{\rho=1}^{r-1}(-1)^{(r-1)\rho+\frac{\rho(\rho+1)}{2}}\nonumber\\
    & \sum_{\zeta=1}^{\rho+1} \sum_{\substack{1\leq i_1< \cdots < i_{\zeta-1} < k\\
    k< i_{\zeta}< \cdots < i_{\rho} \leq r}} \sum_{1\leq j_1 < \cdots < j_\rho \leq s} (-1)^{i_1+ \cdots + i_{\zeta-1} + i_{\zeta}-1 + \cdots + i_\rho-1}(-1)^{j_1 + \cdots + j_\rho} \nonumber\\
    & \qquad \qquad \qquad \qquad \qquad \qquad \quad \cdot 
    \begin{vmatrix}
    (a_{i_1}, b_{j_1})f(x_{i_1}, y_{j_1}) & \cdots & (a_{i_1}, b_{j_\rho})f(x_{i_1}, y_{j_\rho}) \nonumber\\
    \vdots & & \vdots \nonumber\\
    (a_{i_\rho}, b_{j_1})f(x_{i_\rho}, y_{j_1}) & \cdots & (a_{i_1}, b_{j_\rho})f(x_{i_1}, y_{j_\rho}) 
    \end{vmatrix}\nonumber\\
    &\qquad \qquad \qquad \qquad \qquad \qquad \quad \cdot : a_1(x_1) \cdots \reallywidehat{a_{i_1}(x_{i_1})} \cdots  \reallywidehat{a_{i_\rho}(x_{i_\rho})} \cdots a_r(x_r) \nonumber\\
    & \qquad \qquad \qquad \qquad \qquad \qquad \qquad  \cdot b_1(y_1) \cdots \reallywidehat{b_{j_2}(y_{j_2})} \cdots \reallywidehat{b_{j_\rho}(y_{j_\rho})} \cdots b_s(y_s) : 
    \nonumber\\
    %=======================================
    = -\sum_{k=1}^r &(-1)^{r-k} (a_k, a_{r+1})f(x_k, x_{r+1}): a_1(x_1) \cdots \reallywidehat{a_k(x_k)} \cdots a_r(x_r) b_1(y_1)\cdots b_s(y_s):\label{Prod-Line-3-1}\\
    %=======================================
    \quad -\sum_{k=1}^r & (a_k, a_{r+1})f(x_i, x_{r+1}) \sum_{\rho=1}^{r-1}(-1)^{k+r+r\rho+\frac{\rho(\rho+1)}{2}+\zeta+1}\nonumber\\
    & \sum_{\zeta=1}^{\rho+1} \sum_{\substack{1\leq i_1< \cdots < i_{\zeta-1} < k\\
    k< i_{\zeta}< \cdots < i_{\rho} \leq r}} \sum_{1\leq j_1 < \cdots < j_\rho \leq s} (-1)^{i_1+ \cdots + i_{\zeta-1} + i_{\zeta} + \cdots + i_\rho}(-1)^{j_1 + \cdots + j_\rho} \nonumber\\
    & \qquad \qquad \qquad \qquad \qquad \qquad \quad \cdot 
    \begin{vmatrix}
    (a_{i_1}, b_{j_1})f(x_{i_1}, y_{j_1}) & \cdots & (a_{i_1}, b_{j_\rho})f(x_{i_1}, y_{j_\rho}) \nonumber\\
    \vdots & & \vdots \nonumber\\
    (a_{i_\rho}, b_{j_1})f(x_{i_\rho}, y_{j_1}) & \cdots & (a_{i_1}, b_{j_\rho})f(x_{i_1}, y_{j_\rho}) 
    \end{vmatrix}\nonumber\\
    &\qquad \qquad \qquad \qquad \qquad \qquad \quad \cdot : a_1(x_1) \cdots \reallywidehat{a_{i_1}(x_{i_1})} \cdots  \reallywidehat{a_{i_\rho}(x_{i_\rho})} \cdots a_r(x_r) \nonumber\\
    & \qquad \qquad \qquad \qquad \qquad \qquad \qquad \cdot b_1(y_1) \cdots \reallywidehat{b_{j_2}(y_{j_2})} \cdots \reallywidehat{b_{j_\rho}(y_{j_\rho})} \cdots b_s(y_s) : 
    \label{Prod-Line-3-2}
\end{align}
We turn to (\ref{Prod-Line-1-2}) and look at each individual summand. Write down $\rho=1$ summand of (\ref{Prod-Line-1-2}) and replace $i_1$ by $k$, we have
\begin{align*}
    \sum_{k=1}^r (-1)^{k+1+r+1} (a_k, a_{r+1})f(x_k, x_{r+1}) : a_1(x_1) \cdots \reallywidehat{a_k(x_k)} \cdots a_r(x_r) b_1(y_1)\cdots b_s(y_s):
\end{align*}
which cancels out (\ref{Prod-Line-3-1}). 
For general $\rho\geq 2$ summand in (\ref{Prod-Line-1-2}), we rewrite it as
\begin{align*}
    \sum_{1\leq i_1 < \cdots < i_\rho \leq r} \sum_{1\leq j_2 < \cdots < j_\rho \leq s} & (-1)^{i_1+\cdots +i_\rho  + j_2 + \cdots + j_\rho} 
    (-1)^\rho(-1)^{r\rho+\frac{\rho(\rho+1)}2} \nonumber\\
    \cdot & \sum_{\zeta=1}^\rho (-1)^{\zeta+1}(a_{i_\zeta}, a_{r+1}) f(x_{i_\zeta}, x_{r+1}) \\
    \cdot & \begin{vmatrix}
    (a_{i_1}, b_{j_2})f(x_{i_1}, y_{j_2}) & \cdots & (a_{i_1}, b_{j_\rho})f(x_{i_1}, y_{j_\rho}) \\
    \vdots & & \vdots \\
    \hbox{\sout{$(a_{i_\zeta}, b_{j_2})f(x_{i_\zeta}, y_{j_2})$}} & & \hbox{\sout{$(a_{i_\zeta}, b_{j_\rho})f(x_{i_\zeta}, y_{j_\rho})$}} \\
    \vdots & & \vdots \\
    (a_{i_\rho}, b_{j_2})f(x_{i_\rho}, y_{j_2}) & \cdots & (a_{i_1}, b_{j_\rho})f(x_{i_1}, y_{j_\rho}) 
    \end{vmatrix}\nonumber\\
    \cdot & : a_1(x_1) \cdots \reallywidehat{a_{i_1}(x_{i_1})} \cdots  \reallywidehat{a_{i_\rho}(x_{i_\rho})} \cdots a_r(x_r) \nonumber\\
    & \quad \cdot b_1(y_1) \cdots \reallywidehat{b_{j_1}(y_{j_1})} \cdots \reallywidehat{b_{j_\rho}(y_{j_\rho})} \cdots b_s(y_s) :
\end{align*}
that simplifies as 
\begin{align*}
    \sum_{1\leq i_1 < \cdots < i_\rho \leq r} \sum_{1\leq j_2 < \cdots < j_\rho \leq s} \sum_{\zeta=1}^\rho & (-1)^{i_1+\cdots i_{\zeta-1}+i_{\zeta+1}+\cdots +i_\rho  + j_2 + \cdots + j_\rho} (-1)^{i_\zeta+\rho+r\rho+\frac{\rho(\rho+1)}2+\zeta+1} \nonumber\\
    \cdot &(a_{i_\zeta}, a_{r+1}) f(x_{i_\zeta}, x_{r+1}) \begin{vmatrix}
    (a_{i_1}, b_{j_2})f(x_{i_1}, y_{j_2}) & \cdots & (a_{i_1}, b_{j_\rho})f(x_{i_1}, y_{j_\rho}) \\
    \vdots & & \vdots \\
    \hbox{\sout{$(a_{i_\zeta}, b_{j_2})f(x_{i_\zeta}, y_{j_2})$}} & & \hbox{\sout{$(a_{i_\zeta}, b_{j_\rho})f(x_{i_\zeta}, y_{j_\rho})$}} \\
    \vdots & & \vdots \\
    (a_{i_\rho}, b_{j_2})f(x_{i_\rho}, y_{j_2}) & \cdots & (a_{i_1}, b_{j_\rho})f(x_{i_1}, y_{j_\rho}) 
    \end{vmatrix}\nonumber\\
    \cdot & : a_1(x_1) \cdots \reallywidehat{a_{i_1}(x_{i_1})} \cdots  \reallywidehat{a_{i_\rho}(x_{i_\rho})} \cdots a_r(x_r) \nonumber\\
    & \quad \cdot b_1(y_1) \cdots \reallywidehat{b_{j_1}(y_{j_1})} \cdots \reallywidehat{b_{j_\rho}(y_{j_\rho})} \cdots b_s(y_s) :
\end{align*}
We move the summation of $\zeta$ to the front, then separate the summation of $i_\zeta$, replace $i_\zeta, i_{\zeta+1}, ..., i_\rho$ by $k, i_{\zeta}, ..., i_{\rho-1}$, replace $j_2, ..., j_\rho$ by $j_1, ..., j_{\rho-1}$, finally replace $\rho$ by $\rho+1$, so that the summand is expressed as
\begin{align*}
    \sum_{\zeta=1}^{\rho+1}  \sum_{k=1}^r \sum_{\substack{1\leq i_1 < \cdots < i_{\zeta-1}< k\\k < i_{\zeta}<\cdots < i_{\rho} \leq r}} \sum_{1\leq j_1 < \cdots < j_{\rho} \leq s} &   (-1)^{i_1+\cdots +i_{\rho}  + j_1 + \cdots + j_{\rho}} 
    (-1)^{k+\rho+r\rho+\frac{\rho(\rho+1)}2 + \zeta+1} \nonumber\\
    \cdot & (a_{k}, a_{r+1})f(x_{k}, x_{r+1}) \begin{vmatrix}
    (a_{i_1}, b_{j_1})f(x_{i_1}, y_{j_1}) & \cdots & (a_{i_1}, b_{j_{\rho}})f(x_{i_1}, y_{j_{\rho}}) \\
    \vdots & & \vdots \\
    (a_{i_\rho}, b_{j_1})f(x_{i_\rho}, y_{j_1}) & \cdots & (a_{i_1}, b_{j_\rho})f(x_{i_1}, y_{j_\rho}) 
    \end{vmatrix}\nonumber\\
    \cdot & : a_1(x_1) \cdots \reallywidehat{a_{i_1}(x_{i_1})} \cdots \widehat{a_{i_{\zeta-1}}}(x_{i_{\zeta-1}}) \\
    & \quad \cdots \reallywidehat{a_k(x_k)} \cdots \reallywidehat{a_{i_{\zeta}}(x_{i_\zeta})} \cdots  \reallywidehat{a_{i_\rho}(x_{i_\rho})} \cdots a_r(x_r) \nonumber\\
    & \quad \cdot b_1(y_1) \cdots \reallywidehat{b_{j_1}(y_{j_1})} \cdots \reallywidehat{b_{j_\rho}(y_{j_\rho})} \cdots b_s(y_s) :
\end{align*}
which cancels (\ref{Prod-Line-3-2}). 

Thus what remains is $(\ref{Prod-Fully-Normal})+(\ref{Prod-Line-1-1})+(\ref{Prod-Line-2})$. We want to show that $(\ref{Prod-Line-1-1})+(\ref{Prod-Line-2})$ is 
\begin{align}
    \sum_{\rho=1}^{r+1} \sum_{1\leq i_1 < \cdots < i_\rho \leq r+1} \sum_{1\leq j_1 < \cdots < j_\rho \leq s} & (-1)^{i_1+\cdots +i_\rho + j_1 + \cdots + j_\rho}(-1)^{(r+1)\rho+\frac{\rho(\rho+1)}2}\nonumber\\
    \cdot & \begin{vmatrix}
    (a_{i_1}, b_{j_1})f(x_{i_1}, y_{j_1}) & \cdots & (a_{i_1}, b_{j_\rho})f(x_{i_1}, y_{j_\rho}) \\
    \vdots & & \vdots \nonumber\\
    (a_{i_\rho}, b_{j_1})f(x_{i_\rho}, y_{j_1}) & \cdots & (a_{i_1}, b_{j_\rho})f(x_{i_1}, y_{j_\rho}) 
    \end{vmatrix}\nonumber\\
    \cdot & : a_1(x_1) \cdots \reallywidehat{a_{i_1}(x_{i_1})} \cdots  \reallywidehat{a_{i_\rho}(x_{i_\rho})} \cdots a_{r+1}(x_{r+1})\nonumber\\
    & \quad \cdot b_1(y_1) \cdots \reallywidehat{b_{j_1}(y_{j_1})} \cdots \reallywidehat{b_{j_\rho}(y_{j_\rho})} \cdots b_s(y_s) : \label{Prod-Target}
\end{align}
It is clear that (\ref{Prod-Line-1-1}) accounts for the summands with $i_\rho \leq r$ in (\ref{Prod-Target}). 

We replace the $j$ in (\ref{Prod-Line-2}) by $k$ and apply induction hypothesis: 
\begin{align}
    & \sum_{k=1}^s (-1)^{k-1} (a_{r+1}, b_k)f(x_{r+1}, y_k) : a_1(x_1)\cdots a_r(x_r)\reallywidehat{a_{r+1}(x_{r+1})} b_1(y_1) \cdots \reallywidehat{b_k(y_k)} \cdots b_s(y_s):  
    \nonumber\\ 
     & + \sum_{k=1}^s (-1)^{k-1} (a_{r+1}, b_k)f(x_{r+1}, y_k) \nonumber \\
    & \quad \cdot \sum_{\rho=1}^r \sum_{1\leq i_1 < \cdots < i_\rho \leq r} \sum_{\substack{1\leq j_1 < \cdots < j_{\zeta-1} < k \\
    k < j_{\zeta}< \cdots < j_{\rho} \leq r }} (-1)^{i_1 + \cdots + i_\rho + j_1 + \cdots + j_{\zeta-1} + j_\zeta-1 + \cdots +  j_\rho-1}(-1)^{r\rho+\frac{\rho(\rho+1)}2}\nonumber \\
    & \qquad \qquad \qquad \qquad \qquad \qquad \quad \cdot  \begin{vmatrix}
    (a_{i_1}, b_{j_1})f(x_{i_1}, y_{j_1}) & \cdots & (a_{i_1}, b_{j_\rho})f(x_{i_1}, y_{j_\rho})\\
    \vdots & & \vdots \\
    (a_{i_\rho}, b_{j_1})f(x_{i_\rho}, y_{j_1}) & \cdots & (a_{i_\rho}, b_{j_\rho})f(x_{i_\rho}, y_{j_\rho})
    \end{vmatrix}\nonumber\\
    &  \qquad \qquad \qquad \qquad \qquad \qquad \quad  \cdot : a_1(x_1) \cdots \reallywidehat{a_{i_1}(x_{i_1})}  \cdots  \reallywidehat{a_{i_{\rho}}(x_{i_{\rho}})}\cdots a_r(x_r)  \nonumber\\
    & \qquad \qquad \qquad \qquad \qquad \qquad \qquad  \cdot b_1(y_1) \cdots \reallywidehat{b_{j_1}(y_{j_1})} \cdots \reallywidehat{b_{j_{\zeta-1}}(y_{\zeta-1})}\cdots \reallywidehat{b_k(y_k)} \nonumber\\
    & \qquad \qquad \qquad \qquad \qquad \qquad \qquad   \cdots \reallywidehat{b_{j_{\zeta}}(y_{\zeta})}\cdots \reallywidehat{b_{j_\rho}(y_{j_\rho})} \cdots b_s(y_s):
    \nonumber \\
    = \ & \sum_{k=1}^s (-1)^{k-1} (a_{r+1}, b_k)f(x_{r+1}, y_k) : a_1(x_1)\cdots a_r(x_r)\reallywidehat{a_{r+1}(x_{r+1})} b_1(y_1) \cdots \reallywidehat{b_k(y_k)} \cdots b_s(y_s):  
    \label{Prod-Line-2-1}\\ 
     & + \sum_{k=1}^s (-1)^{k-1} (a_{r+1}, b_k)f(x_{r+1}, y_k) \nonumber \\
    & \quad \cdot \sum_{\rho=1}^r \sum_{1\leq i_1 < \cdots < i_\rho \leq r} \sum_{\substack{1\leq j_1 < \cdots < j_{\zeta-1} < k \\
    k < j_{\zeta}< \cdots < j_{\rho} \leq r }} (-1)^{i_1 + \cdots + i_\rho + j_1 + \cdots + j_{\zeta-1} + j_\zeta + \cdots +  j_\rho}(-1)^{r\rho+\frac{\rho(\rho+1)}2+\rho-\zeta+1}\nonumber \\
    & \qquad \qquad \qquad \qquad \qquad \qquad \quad \cdot  \begin{vmatrix}
    (a_{i_1}, b_{j_1})f(x_{i_1}, y_{j_1}) & \cdots & (a_{i_1}, b_{j_\rho})f(x_{i_1}, y_{j_\rho})\\
    \vdots & & \vdots \\
    (a_{i_\rho}, b_{j_1})f(x_{i_\rho}, y_{j_1}) & \cdots & (a_{i_\rho}, b_{j_\rho})f(x_{i_\rho}, y_{j_\rho})
    \end{vmatrix}\nonumber\\
    &  \qquad \qquad \qquad \qquad \qquad \qquad \quad  \cdot : a_1(x_1) \cdots \reallywidehat{a_{i_1}(x_{i_1})}  \cdots  \reallywidehat{a_{i_{\rho}}(x_{i_{\rho}})}\cdots a_r(x_r)  \nonumber\\
    & \qquad \qquad \qquad \qquad \qquad \qquad \qquad  \cdot b_1(y_1) \cdots \reallywidehat{b_{j_1}(y_{j_1})} \cdots \reallywidehat{b_{j_{\zeta-1}}(y_{\zeta-1})}\cdots \reallywidehat{b_k(y_k)} \nonumber\\
    & \qquad \qquad \qquad \qquad \qquad \qquad \qquad   \cdots \reallywidehat{b_{j_{\zeta}}(y_{\zeta})}\cdots \reallywidehat{b_{j_\rho}(y_{j_\rho})} \cdots b_s(y_s):
 \label{Prod-Line-2-2} 
\end{align}
It is clear that (\ref{Prod-Line-2-1}) accounts for the summand of (\ref{Prod-Target}) with $i_\rho = r+1$ and $\rho=1$ in (\ref{Prod-Target}). %if
%$$r+1+k+m(r+1,s,\rho) \equiv k-1 \mod 2$$
To see that (\ref{Prod-Line-2-2}) accounts for the summands of (\ref{Prod-Target}) with $i_\rho=r+1$ and $\rho\geq 2$, we rewrite the sum of related summands in (\ref{Prod-Target}) as
\begin{align*}
    \sum_{\rho=2}^{r+1} \sum_{1\leq i_1 < \cdots < i_{\rho-1} \leq r} \sum_{1\leq j_1 < \cdots < j_\rho \leq s} & (-1)^{i_1+\cdots +i_{\rho-1}+r+1 + j_1 + \cdots + j_\rho}(-1)^{(r+1)\rho + \frac{\rho(\rho+1)}2 }\nonumber\\
    \cdot & \begin{vmatrix}
    (a_{i_1}, b_{j_1})f(x_{i_1}, y_{j_1}) & \cdots & (a_{i_1}, b_{j_\rho})f(x_{i_1}, y_{j_\rho}) \\
    \vdots & & \vdots \nonumber\\
    (a_{i_{\rho-1}}, b_{j_1})f(x_{i_{\rho-1}}, y_{j_1}) & \cdots & (a_{i_{\rho-1}}, b_{j_\rho})f(x_{i_1}, y_{j_\rho})\\
    (a_{r+1}, b_{j_1})f(x_{r+1}, y_{j_1}) & \cdots & (a_{r+1}, b_{j_\rho})f(x_{r+1}, y_{j_\rho}) 
    \end{vmatrix}\nonumber\\
    \cdot & : a_1(x_1) \cdots \reallywidehat{a_{i_1}(x_{i_1})} \cdots  \reallywidehat{a_{i_{\rho-1}}(x_{i_{\rho-1}})}\cdots a_{r}(x_{r}) \reallywidehat{a_{r+1}(x_{r+1})}\nonumber\\
    & \quad \cdot b_1(y_1) \cdots \reallywidehat{b_{j_1}(y_{j_1})} \cdots \reallywidehat{b_{j_\rho}(y_{j_\rho})} \cdots b_s(y_s) :\\
    %===============================
    =\sum_{\rho=2}^{r+1} \sum_{1\leq i_1 < \cdots < i_{\rho-1} \leq r} \sum_{1\leq j_1 < \cdots < j_\rho \leq s} & (-1)^{i_1+\cdots +i_{\rho-1} + j_1 + \cdots + j_\rho}(-1)^{(r+1)(\rho+1) + \frac{\rho(\rho+1)}2}\nonumber\\
    \cdot\sum_{\zeta=1}^\rho &  (-1)^{\rho+\zeta}(a_{r+1}, b_{i_\zeta})f(x_{r+1}, y_{i_\zeta}) \\
    \cdot& \begin{vmatrix}
    (a_{i_1}, b_{j_1})f(x_{i_1}, y_{j_1}) & \cdots & (a_{i_1}, b_{j_\rho})f(x_{i_1}, y_{j_\rho}) \\
    \vdots & & \vdots \nonumber\\
    (a_{i_{\rho-1}}, b_{j_1})f(x_{i_{\rho-1}}, y_{j_1}) & \cdots & (a_{i_{\rho-1}}, b_{j_\rho})f(x_{i_1}, y_{j_\rho})
    \end{vmatrix}\nonumber\\
    \cdot & : a_1(x_1) \cdots \reallywidehat{a_{i_1}(x_{i_1})} \cdots  \reallywidehat{a_{i_{\rho-1}}(x_{i_{\rho-1}})}\cdots a_{r}(x_{r})\reallywidehat{a_{r+1}(x_{r+1})}\nonumber\\
    & \quad \cdot b_1(y_1) \cdots \reallywidehat{b_{j_1}(y_{j_1})} \cdots \reallywidehat{b_{j_\rho}(y_{j_\rho})} \cdots b_s(y_s) :
\end{align*}
We move the summation of $\zeta$ to the front, then separate the summation of $j_\zeta$, replace $j_\zeta, j_{\zeta+1}, ..., j_\rho$ by $k, j_{\zeta}, ..., j_{\rho-1}$, finally replace $\rho$ by $\rho+1$, to get
\begin{align*}
     \sum_{\rho=1}^{r}\sum_{\zeta=1}^\rho \sum_{k=1}^r(-1)^{\rho+1+\zeta+r+1}(-1)^k&(a_{r+1}, b_{k})f(x_{r+1}, y_{k})\\
    \sum_{1\leq i_1 < \cdots < i_{\rho-1} \leq r} \sum_{\substack{1\leq j_1 < \cdots < j_{\zeta-1} < k \\
    k < j_{\zeta} < \cdots < j_\rho \leq s}} & (-1)^{i_1+\cdots +i_{\rho} + j_1 + \cdots + j_\rho}(-1)^{(r+1)(\rho+1) + \frac{(\rho+1)(\rho+2)}2}\nonumber\\
    \cdot& \begin{vmatrix}
    (a_{i_1}, b_{j_1})f(x_{i_1}, y_{j_1}) & \cdots & (a_{i_1}, b_{j_\rho})f(x_{i_1}, y_{j_\rho}) \\
    \vdots & & \vdots \nonumber\\
    (a_{i_{\rho}}, b_{j_1})f(x_{i_{\rho}}, y_{j_1}) & \cdots & (a_{i_{\rho}}, b_{j_\rho})f(x_{i_1}, y_{j_\rho})
    \end{vmatrix}\nonumber\\
    \cdot & : a_1(x_1) \cdots \reallywidehat{a_{i_1}(x_{i_1})} \cdots  \reallywidehat{a_{i_{\rho}}(x_{i_{\rho}})}\cdots a_{r}(x_{r})\reallywidehat{a_{r+1}(x_{r+1})}\nonumber\\
    & \cdot b_1(y_1) \cdots \reallywidehat{b_{j_1}(y_{j_1})} \cdots \widehat{b_{j_{\zeta-1}}}(y_{\zeta-1})\cdots \reallywidehat{b_k(y_k)} \\& \quad \cdots \reallywidehat{b_{j_{\zeta}}(y_{\zeta})}\cdots \reallywidehat{b_{j_\rho}(y_{j_\rho})} \cdots b_s(y_s) :\\
    =  \sum_{\rho=1}^{r}\sum_{\zeta=1}^\rho \sum_{k=1}^r(-1)^{k+1}(a_{r+1}, b_{k}) f(x_{r+1}&, y_{k}) \\
    \sum_{1\leq i_1 < \cdots < i_{\rho-1} \leq r} \sum_{\substack{1\leq j_1 < \cdots < j_{\zeta-1} < k \\
    k < j_{\zeta} < \cdots < j_\rho \leq s}} & (-1)^{i_1+\cdots +i_{\rho} + j_1 + \cdots + j_\rho}(-1)^{r\rho+\rho + \frac{(\rho+1)\rho}2+\zeta+1}\nonumber\\
    \cdot& \begin{vmatrix}
    (a_{i_1}, b_{j_1})f(x_{i_1}, y_{j_1}) & \cdots & (a_{i_1}, b_{j_\rho})f(x_{i_1}, y_{j_\rho}) \\
    \vdots & & \vdots \nonumber\\
    (a_{i_{\rho}}, b_{j_1})f(x_{i_{\rho}}, y_{j_1}) & \cdots & (a_{i_{\rho}}, b_{j_\rho})f(x_{i_1}, y_{j_\rho})
    \end{vmatrix}\nonumber\\
    \cdot & : a_1(x_1) \cdots \reallywidehat{a_{i_1}(x_{i_1})} \cdots  \reallywidehat{a_{i_{\rho}}(x_{i_{\rho}})}\cdots a_{r}(x_{r})\reallywidehat{a_{r+1}(x_{r+1})}\nonumber\\
    & \cdot b_1(y_1) \cdots \reallywidehat{b_{j_1}(y_{j_1})} \cdots \widehat{b_{j_{\zeta-1}}}(y_{\zeta-1})\cdots \reallywidehat{b_k(y_k)} \\
    & \quad \cdots \reallywidehat{b_{j_{\zeta}}(y_{\zeta})}\cdots \reallywidehat{b_{j_\rho}(y_{j_\rho})} \cdots b_s(y_s) :
\end{align*}
This coincides with (\ref{Prod-Line-2-2}). 
So the conclusion holds under the condition $r+1\leq s$. 

For $r\geq s$, the argument is almost verbatim. The only necessary modification is on the upper limit of the summation with respect to $\rho$. The upper limit should be modified from $r$ to $s$. Other steps are completely identical. We should not repeat the details. 
\end{proof}

\begin{rema}
What we have proved can be viewed as a generalization of Wick's theorem in quantum field theory, expressing a product of two normal-ordered products in terms of other normal-ordered products. Theorem \ref{Product-r-s-thm} shows that Wick's theorem holds without assuming any relations among the creation operators. 
\end{rema}

\subsection{Product and iterate formulas}

\begin{cor}\label{Product-thm}
    For $r, s\in \N, a_1, ..., a_r, b_1, ..., b_s\in \h, m_1, ..., m_r, n_1, ..., n_s\in \N$
    \begin{align}
        & Y(a_1(-m_1-1/2)\cdots a_r(-m_r-1/2)\one, x)Y(b_1(-n_1-1/2)\cdots b_s(-n_s-1/2)\one, y) \nonumber\\
        = \ & :a_1^{(m_1)}(x)\cdots a_r^{(m_r)}(x)b_1^{(n_1)}(y)\cdots b_s^{(n_s)}(y):\nonumber \\
        & + \sum_{\rho=1}^{\min(r,s)}\sum_{\substack{ 1 \leq i_1 < \cdots < i_\rho\leq r\\ 1 \leq j_1 < \cdots < j_\rho \leq s}}(-1)^{i_1+\cdots + i_\rho + j_1+\cdots + j_\rho }(-1)^{r\rho + \rho(\rho+1)/2}\nonumber \\
        & \hspace{9.7 em}\cdot \begin{vmatrix}
            (a_{i_1}, b_{j_1})f_{m_{i_1}n_{j_1}}(x, y) & \cdots & (a_{i_1}, b_{j_\rho})f_{m_{i_1}n_{j_\rho}}(x, y)\\
            \vdots & & \vdots\nonumber  \\
            (a_{i_\rho}, b_{j_1})f_{m_{i_\rho}n_{j_1}}(x, y) & \cdots & (a_{i_\rho}, b_{j_\rho})f_{m_{i_\rho}n_{j_\rho}}(x, y)
        \end{vmatrix}\nonumber \\
        & \hspace{10 em}\cdot :a_{1}^{(m_1)}(x) \cdots \reallywidehat{a_{i_1}^{(m_{i_1})}(x)} \cdots \reallywidehat{a_{i_\rho}^{(m_{i_\rho})}(x)} \cdots a_{r}^{(m_r)}(x)\nonumber  \\
        & \hspace{11 em}\cdot b_{1}^{(n_1)}(y) \cdots \reallywidehat{b_{j_1}^{(n_{j_1})}(y)} \cdots \reallywidehat{b_{j_\rho}^{(n_{j_\rho})}(y)} \cdots b_{s}^{(n_s)}(y):\label{Product-formula}
    \end{align}
\end{cor}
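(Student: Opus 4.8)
The plan is to deduce (\ref{Product-formula}) directly from Theorem \ref{Product-r-s-thm} by collapsing all the $a$-variables onto a single $x$ and all the $b$-variables onto a single $y$. First I would invoke Definition \ref{vo-defn} to rewrite each factor on the left-hand side of (\ref{Product-formula}) as a normal-ordered product at coincident variables,
$$Y(a_1(-m_1-1/2)\cdots a_r(-m_r-1/2)\one, x) = \ :a_1^{(m_1)}(x)\cdots a_r^{(m_r)}(x):,$$
and likewise for the $b$'s. Thus the left-hand side of (\ref{Product-formula}) is exactly the left-hand side of (\ref{Product-r-s-formula}) under the substitution $x_1 = \cdots = x_r = x$ and $y_1 = \cdots = y_s = y$, and the whole task reduces to showing that this substitution may be carried out on both sides of (\ref{Product-r-s-formula}).

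The structural reason this works --- and, I believe, the reason the distinct-variable version is proved first --- is that every contraction appearing in (\ref{Product-r-s-formula}) pairs an $a$-index with a $b$-index: the determinant entries are exactly $(a_{i_k}, b_{j_l})f_{m_{i_k}n_{j_l}}(x_{i_k}, y_{j_l})$, and no factor of the form $f(x_i, x_{i'})$ or $f(y_j, y_{j'})$ ever occurs. Hence the collapse produces only the series $\iota_{xy}f_{mn}(x,y)$, each of which is regular in $\C[[x^{-1}]][[y]]$, and never a singular $f(x,x)$ or $f(y,y)$. Every surviving normal-ordered product becomes a single-variable one, while the signs and combinatorial prefactors are variable-independent; so term by term the right-hand side of (\ref{Product-r-s-formula}) specializes precisely to the right-hand side of (\ref{Product-formula}). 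This also explains why one cannot simply rerun the induction of Theorem \ref{Product-r-s-thm} with coincident variables from the outset: that argument builds $:a_1(x_1)\cdots a_{r+1}(x_{r+1}):$ out of $:a_1\cdots a_r: a_{r+1}$ via Proposition \ref{Product-r-1-prop}, introducing exactly the singular $a$-$a$ contractions $f(x_i, x_{r+1})$ that the final collapse must avoid.

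The point requiring the most care, and the main obstacle, is the legitimacy of the substitution on the operator-product side rather than any new combinatorial identity. Writing $u = a_1(-m_1-1/2)\cdots a_r(-m_r-1/2)\one$ and $v = b_1(-n_1-1/2)\cdots b_s(-n_s-1/2)\one$, the product $Y(u,x)Y(v,y)$ is itself well-defined by iterated application: Definition \ref{vo-defn} guarantees $Y(v,y)v_0 \in V((y))$ for each $v_0 \in V$, and applying $Y(u,x)$ to each coefficient yields an element of $V((x))((y))$. What must be verified is that collapsing $x_i = x$ and $y_j = y$ in the multivariable product of Theorem \ref{Product-r-s-thm} reproduces this iterated composition. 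I would check this by applying both sides to an arbitrary $v_0$ and invoking the lower-truncation property built into Definition \ref{vo-defn}: because the positive (annihilating) modes act nontrivially only finitely often on any fixed $v_0$, for each monomial $x^p y^q$ only finitely many multivariable monomials contribute after the collapse. The substitution is therefore unambiguous and compatible with operator composition, and with this interpretation the specialization of (\ref{Product-r-s-formula}) yields (\ref{Product-formula}) verbatim.
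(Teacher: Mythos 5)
Your proposal is correct and follows exactly the paper's route: the paper's own proof of Corollary \ref{Product-thm} consists of the single step of substituting $x_1=\cdots=x_r=x$, $y_1=\cdots=y_s=y$ into Theorem \ref{Product-r-s-thm}. Your additional remarks---that every contraction pairs an $a$-index with a $b$-index so the collapse never produces a singular $f(x,x)$, and that lower truncation makes the specialization compatible with the iterated operator product---are sound justifications that the paper leaves implicit.
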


\begin{proof}
    Directly substitute $x_1 = x, ..., x_r = r, y_1=y, ..., y_s = y$ in Theorem \ref{Product-r-s-thm} to see the conclusion.  
\end{proof}

\begin{cor}
    For $r, s\in \N, a_1, ..., a_r, b_1, ..., b_s\in \h, m_1, ..., m_r, n_1, ..., n_s\in \N$
    \begin{align*}
        & Y(a_1(-m_1-1/2)\cdots a_r(-m_r-1/2)\one, x)b_1(-n_1-1/2)\cdots b_s(-n_s-1/2)\one \\
        = \ & a_1^{(m_1)}(x)^-\cdots a_r^{(m_r)}(x)^-b_1(-n_1-1/2)\cdots b_s(-n_s-1/2)\one\nonumber \\
        & + \sum_{\rho=1}^{\min(r,s)}\sum_{\substack{ 1 \leq i_1 < \cdots < i_\rho\leq r\\ 1 \leq j_1 < \cdots < j_\rho \leq s}}(-1)^{i_1+\cdots + i_\rho + j_1+\cdots + j_\rho }(-1)^{r\rho + \rho(\rho+1)/2}\nonumber \\
        & \hspace{9.7 em}\cdot \begin{vmatrix}
            (a_{i_1}, b_{j_1})(x^{-n_{j_1}-1})^{(m_{i_1})} & \cdots & (a_{i_1}, b_{j_\rho})(x^{-n_{j_\rho}-1})^{(m_{i_1})}\\
            \vdots & & \vdots\nonumber  \\
            (a_{i_\rho}, b_{j_1})(x^{-n_{j_1}-1})^{(m_{i_\rho})} & \cdots & (a_{i_\rho}, b_{j_\rho})(x^{-n_{j_\rho}-1})^{(m_{i_\rho})} 
        \end{vmatrix}\nonumber \\
        & \hspace{10 em}\cdot a_{1}^{(m_1)}(x)^- \cdots \reallywidehat{a_{i_1}^{(m_{i_1})}(x)^-} \cdots \reallywidehat{a_{i_\rho}^{(m_{i_\rho})}(x)^-} \cdots a_{r}^{(m_r)}(x)^-\nonumber  \\
        & \hspace{10 em}\cdot b_{1}(-n_1/1-2) \cdots \reallywidehat{b_{j_1}(-n_{j_1}-1/2)} \cdots \reallywidehat{b_{j_\rho}(-n_{j_\rho}-1/2)} \cdots b_{s}(-n_s-1/2)\one
    \end{align*}
\end{cor}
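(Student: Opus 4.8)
The plan is to deduce this directly from the product formula in Corollary \ref{Product-thm} by applying both sides to the vacuum $\one$ and letting $y\to 0$. First I would record, using the creation property of Definition \ref{DefMOSVA} together with the explicit shape of the vertex operator in Definition \ref{vo-defn}, that
$$Y(b_1(-n_1-1/2)\cdots b_s(-n_s-1/2)\one, y)\one = b_1^{(n_1)}(y)^- \cdots b_s^{(n_s)}(y)^-\one \in V[[y]],$$
whose value at $y=0$ is $b_1(-n_1-1/2)\cdots b_s(-n_s-1/2)\one$. Hence the left-hand side of the asserted identity equals $\lim_{y\to 0}$ of $Y(a_1(-m_1-1/2)\cdots a_r(-m_r-1/2)\one, x)Y(b_1(-n_1-1/2)\cdots b_s(-n_s-1/2)\one, y)\one$, and extracting this $y^0$-coefficient is legitimate because, after acting on $\one$, every expression involved lies in $V((x))[[y]]$.

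Next I would apply Corollary \ref{Product-thm} to rewrite that operator product as a sum of normal-ordered products in $x$ and $y$, then act on $\one$ term by term. The decisive simplification is that positive modes annihilate $\one$: in any normal-ordered product $:c_1^{(\ell_1)}(z_1)\cdots c_t^{(\ell_t)}(z_t):$ every $2$-shuffle summand containing a positive-mode factor $c^{(\ell)}(z)^+$ vanishes on $\one$, since the rightmost such factor acts first, so only the $\eta=t$ shuffle (the identity permutation) survives, giving
$$:c_1^{(\ell_1)}(z_1)\cdots c_t^{(\ell_t)}(z_t):\one = c_1^{(\ell_1)}(z_1)^- \cdots c_t^{(\ell_t)}(z_t)^-\one.$$
Applied to the leading term of Corollary \ref{Product-thm} this produces $a_1^{(m_1)}(x)^-\cdots a_r^{(m_r)}(x)^- b_1^{(n_1)}(y)^-\cdots b_s^{(n_s)}(y)^-\one$, and applied to each correction term it strips the surviving normal ordering down to the corresponding product of negative-mode parts while leaving the scalar contraction factors untouched.

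Finally I would pass to the limit $y\to 0$. For the generating functions this uses $\lim_{y\to 0} b^{(n)}(y)^- = b(-n-1/2)$, which is immediate from the series $h^{(m)}(x)^- = \sum_{n\geq 0}\binom{n}{m}h(-n-1/2)x^{n-m}$ (only the $n=m$ term survives at $x=0$); this turns each remaining $b_j^{(n_j)}(y)^-$ into the creation operator $b_j(-n_j-1/2)$ while the factors $a_i^{(m_i)}(x)^-$ are unchanged. For the contraction entries one uses $\lim_{y\to 0}\iota_{xy}f_{mn}(x,y) = f_{mn}(x,0) = \binom{-n-1}{m}x^{-n-m-1} = (x^{-n-1})^{(m)}$, which converts each determinant entry $(a_{i_k}, b_{j_l})f_{m_{i_k}n_{j_l}}(x,y)$ into $(a_{i_k}, b_{j_l})(x^{-n_{j_l}-1})^{(m_{i_k})}$, exactly matching the determinant in the statement. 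Assembling these substitutions reproduces the claimed formula.

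The argument is essentially bookkeeping, so there is no serious obstacle; the only points demanding care are the reduction of normal-ordered products on $\one$ to pure negative-mode products, namely verifying that every shuffle summand carrying a positive mode is killed by the vacuum, and the clean matching $\lim_{y\to 0}\iota_{xy}f_{mn}(x,y)=(x^{-n-1})^{(m)}$ that identifies the limit of the contraction factor with the derivative symbol.
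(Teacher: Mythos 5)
Your proof is correct and is essentially the paper's own argument: the paper likewise acts its Wick-type product formula on the vacuum and specializes the second set of variables to zero (using the multi-variable Theorem \ref{Product-r-s-thm} with $y_1=\cdots=y_s=0$, where you use Corollary \ref{Product-thm} and let $y\to 0$ after invoking the creation property), reduces each normal-ordered product on $\one$ to its pure negative-mode part since positive modes annihilate the vacuum, and identifies the determinant entries via $f_{mn}(x,0)=(x^{-n-1})^{(m)}$. The only difference --- collapsing the $y_j$'s to a single $y$ before, rather than after, evaluating at zero --- is immaterial.
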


\begin{proof}
    It follows from acting the formula (\ref{Product-r-s-formula}) in Theorem \ref{Product-r-s-thm} on the vacuum $\one$, then evaluate $y_1 = 0, ..., y_s = 0$. Note that 
    \begin{align*}
        f_{mn}(x,0)= \binom{-n-1}{m} (x-0)^{-n-m-1}= \binom{-n-1}{m}x^{-n-1-m} = (x^{-n-1})^{(m)}. 
    \end{align*}
    Note also that 
    \begin{align*}
        & :a_1(x_1)\cdots a_r(x_r)b_1(-n_1-1/2)\cdots b_s(-n_s-1/2):\one \\
        = \ & a_1(x_1)^-\cdots a_r(x_r)^-b_1(-n_1-1/2)\cdots b_s(-n_s-1/2)\one
    \end{align*}
    since any positive mode in the normal ordering annihilates the vaccum. 
    The conclusion easily follows from these observations. 
\end{proof}

\begin{cor}\label{Iterate-thm}
    For $r, s\in \N, a_1, ..., a_r, b_1, ..., b_s\in \h, m_1, ..., m_r, n_1, ..., n_s\in \N$
    \begin{align}
        & Y(Y(a_1(-m_1-1/2)\cdots a_r(-m_r-1/2)\one, x)b_1(-n_1-1/2)\cdots b_s(-n_s-1/2)\one, y)\nonumber \\
        = \ & :a_1^{(m_1)}(y+x)\cdots a_r^{(m_r)}(y+x)b_1^{(n_1)}(y)\cdots b_s^{(n_s)}(y):\nonumber \\
        & + \sum_{\rho=1}^{\min(r,s)}\sum_{\substack{ 1 \leq i_1 < \cdots < i_\rho\leq r\\ 1 \leq j_1 < \cdots < j_\rho \leq s}}(-1)^{i_1+\cdots + i_\rho + j_1+\cdots + j_\rho }(-1)^{r\rho + \rho(\rho+1)/2}\nonumber \\
        & \hspace{9.7 em}\cdot \begin{vmatrix}
            (a_{i_1}, b_{j_1})(x^{-n_{j_1}-1})^{(m_{i_1})} & \cdots & (a_{i_1}, b_{j_\rho})(x^{-n_{j_\rho}-1})^{(m_{i_1})}\\
            \vdots & & \vdots\nonumber  \\
            (a_{i_\rho}, b_{j_1})(x^{-n_{j_1}-1})^{(m_{i_\rho})} & \cdots & (a_{i_\rho}, b_{j_\rho})(x^{-n_{j_\rho}-1})^{(m_{i_\rho})} 
        \end{vmatrix}\nonumber \\
        & \hspace{10 em}\cdot :a_{1}^{(m_1)}(y+x) \cdots \reallywidehat{a_{i_1}^{(m_{i_1})}(y+x)} \cdots \reallywidehat{a_{i_\rho}^{(m_{i_\rho})}(y+x)} \cdots a_{r}^{(m_r)}(y+x)\nonumber  \\
        & \hspace{11 em}\cdot b_{1}^{(n_1)}(y) \cdots \reallywidehat{b_{j_1}^{(n_{j_1})}(y)} \cdots \reallywidehat{b_{j_\rho}^{(n_{j_\rho})}(y)} \cdots b_{s}^{(n_s)}(y):\label{Iterate-formula}
    \end{align}
\end{cor}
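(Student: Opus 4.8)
The plan is to derive (\ref{Iterate-formula}) by applying the vertex operator $Y(\cdot,y)$ directly to the explicit vector
$Y(a_1(-m_1-1/2)\cdots a_r(-m_r-1/2)\one,x)\,b_1(-n_1-1/2)\cdots b_s(-n_s-1/2)\one$
furnished by the preceding corollary. That corollary already writes this vector as a ``leading'' term
$a_1^{(m_1)}(x)^-\cdots a_r^{(m_r)}(x)^-\,b_1(-n_1-1/2)\cdots b_s(-n_s-1/2)\one$
plus correction terms, each of which is a determinant of scalar series in $x$ (with entries $(x^{-n_j-1})^{(m_i)}$, independent of $y$ and of $V$) multiplying a vector of the same shape but with certain $a$'s and $b$'s deleted. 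Since $Y(\cdot,y)$ is linear in its first slot and those scalar determinants do not involve $y$, they factor out and reproduce verbatim the determinants in (\ref{Iterate-formula}). Thus the whole statement reduces to computing $Y(\cdot,y)$ of a single vector of the form $a_{c_1}^{(m_{c_1})}(x)^-\cdots a_{c_p}^{(m_{c_p})}(x)^-\,b_{d_1}(-n_{d_1}-1/2)\cdots b_{d_q}(-n_{d_q}-1/2)\one$.

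The key step is a resummation. Expanding $a_c^{(m_c)}(x)^-=\sum_{k\geq 0}\binom{k}{m_c}a_c(-k-1/2)x^{k-m_c}$, the vector above becomes a power series in $x$ whose coefficients are basis vectors of $V$, and Definition \ref{vo-defn} turns each basis vector into a normal-ordered product $:a_{c_1}^{(k_1)}(y)\cdots a_{c_p}^{(k_p)}(y)\,b_{d_1}^{(n_{d_1})}(y)\cdots b_{d_q}^{(n_{d_q})}(y):$. Because for each fixed power of $x$ only finitely many tuples $(k_1,\dots,k_p)$ contribute, I may interchange the $x$-summations with the normal ordering and invoke its multilinearity to collect the $x$-dependence inside each slot. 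It then remains to verify the Taylor-type identity
$$\sum_{k\geq 0}\binom{k}{m}x^{k-m}a^{(k)}(y)=a^{(m)}(y+x),$$
where the right side is $a^{(m)}(z)$ evaluated at $z=y+x$ with the $(y+x)$-powers expanded in nonnegative powers of $x$. This follows from $a(y+x)=e^{x\partial_y}a(y)=\sum_{l\geq 0}x^l a^{(l)}(y)$ and differentiating $m$ times in $y$, after a reindexing $k=l+m$. Consequently each factor $a_c^{(m_c)}(x)^-$ is replaced by $a_c^{(m_c)}(y+x)$ inside the normal ordering, while each surviving $b_d$ contributes $b_d^{(n_d)}(y)$.

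Assembling the pieces, the leading term produces the fully normal-ordered expression $:a_1^{(m_1)}(y+x)\cdots a_r^{(m_r)}(y+x)\,b_1^{(n_1)}(y)\cdots b_s^{(n_s)}(y):$, i.e. the first line of (\ref{Iterate-formula}), and each correction term produces the same determinant (carried through unchanged) times the normal-ordered product with the deleted factors, with the signs $(-1)^{i_1+\cdots+i_\rho+j_1+\cdots+j_\rho}(-1)^{r\rho+\rho(\rho+1)/2}$ inherited from the preceding corollary. As a consistency check one notes that the entry $(x^{-n_j-1})^{(m_i)}$ equals $f_{m_in_j}(y+x,y)$, so the resulting expression is exactly the right side of Theorem \ref{Product-r-s-thm} specialized to $x_i=y+x$ and $y_j=y$. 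I expect the only genuinely delicate points to be the justification of interchanging the infinite $x$-summation with the normal ordering and the binomial-coefficient bookkeeping in the Taylor identity; everything else is formal substitution identical in shape to the computation already carried out in Theorem \ref{Product-r-s-thm}.
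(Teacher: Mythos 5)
Your proposal is correct and follows essentially the same route as the paper: apply $Y(\cdot,y)$ to the expression for $Y(u,x)v$ from the preceding corollary, factor the scalar determinants out by linearity, and resum the mode expansion of each $a_c^{(m_c)}(x)^-$ via the formal Taylor theorem to produce $a_c^{(m_c)}(y+x)$ inside the normal ordering. The paper compresses your explicit reduction-and-factoring step into the phrase ``by variation of the choices of $a_1,\dots,a_r$ and $m_1,\dots,m_r$,'' but the substance — including the identity $\sum_{k\geq 0}\binom{k}{m}x^{k-m}a^{(k)}(y)=a^{(m)}(y+x)$ and the observation that the determinant entries are exactly $f_{m_in_j}(y+x,y)$ — is identical.
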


\begin{proof}
    We first compute
    \begin{align}
        & Y(a_1^{(m_1)}(x)^- \cdots a_r^{(m_r)}(x)^- b_1(-n_1-1/2)\cdots b_s(-n_s-1/2)\one, y)\label{Iterate-1} \\
        = \ & \sum_{i_1, ..., i_r\geq 0} Y(a_1(-i_1-1/2)\cdots a_r(-i_r-1/2)b_1(-n_1-1/2)\cdots b_s(-n_s-1/2)\one, y)(x^{i_1})^{(m_1)}\cdots (x^{i_r})^{(m_r)}\nonumber\\
        = \ & \sum_{i_1, ..., i_r\geq 0} :a_1^{(i_1)}(y)\cdots  a_r^{(i_r)}(y)b_1^{(n_1)}(y)\cdots b_s^{(n_s)}(y):(x^{i_1})^{(m_1)}\cdots (x^{i_r})^{(m_r)}\nonumber\\
        = \ & :\frac{1}{m_1!}\frac{\partial^{m_1}}{\partial x^{m_1}}\left(\sum_{i_1\geq 0} \frac{x^{i_1}}{i_1!}\frac{\partial^{i_1}}{\partial y^{i_1}}a_1(y)\right) \cdots \frac{1}{m_r!}\frac{\partial^{m_r}}{\partial x^{m_r}}\left(\sum_{i_r\geq 0} \frac{x^{i_r}}{i_r!}\frac{\partial^{i_r}}{\partial y^{i_r}}a_r(y)\right) b_1^{(n_1)}(y)\cdots b_s^{(n_s)}(y):\nonumber
    \end{align}
    By the formal Taylor theorem, 
    $$\sum_{i\geq 0} \frac{x^i}{i_1!}\frac{\partial}{\partial x}a(y) = a(y+x), $$
    thus
    \begin{align*}
        (\ref{Iterate-1}) = \ &  :\frac{1}{m_1!}\frac{\partial^{m_1}}{\partial x^{m_1}}\left(a_1(y+x)\right) \cdots \frac{1}{m_r!}\frac{\partial^{m_r}}{\partial x^{m_r}}\left(a_r(y+x)\right)b_1^{(n_1)}(y)\cdots b_s^{(n_s)}(y):    
    \end{align*}
    Noticing from chain rule that 
    $$\frac{\partial }{\partial x} = \frac{\partial}{\partial(y+x)}\cdot \frac{\partial(y+x)}{\partial x} = \frac{\partial}{\partial(y+x)}, $$
    we conclude that 
    \begin{align*}
        (\ref{Iterate-1}) = \ &  :a_1^{(m_1)}(y+x) \cdots a_r^{(m_r)}(y+x) b_1^{(n_1)}(y)\cdots b_s^{(n_s)}(y):
    \end{align*}
    By variation of the choices of $a_1, ..., a_r$ and $m_1, ..., m_r$, we may conclude the proof of the corollary. 
\end{proof}

\subsection{Proof of weak associativity}
We finish the proof of Theorem \ref{main} by showing the following proposition
\begin{prop}
    Let $u = a_1(-m_1-1/2)\cdots a_r(-m_r-1/2)\one, v = b_1(-n_1-1/2) \cdots b_s(-n_s-1/2)\one$ for any $a_1, ..., a_r, b_1, ..., b_s\in \h, m_1, ..., m_r, n_1, ..., n_s\in \N$, then for every homogeneous $w\in W$, the identity
    $$(x+y)^P Y(u, x)Y(v, y)w = (x+y)^P Y(Y(u, x)v, y)w$$
    holds, where
    $$P = \wt(w) + m_1 + \cdots + m_r + r$$
    that is independent of $v$. 
\end{prop}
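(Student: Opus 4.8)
The plan is to read both sides off the explicit formulas already proved and to show that, after setting the first variable equal to $x+y$, they are two expansions of one and the same $W$-valued rational function, the prefactor $(x+y)^P$ serving only to legitimize the rearrangement. Throughout I identify the statement with axiom (4) of Definition \ref{DefMOSVA} under $x_0=x$, $x_2=y$, so the product side is $Y(u,x+y)Y(v,y)w$ (the first vertex operator sits at $x+y$, matching $Y_V(u_1,x_0+x_2)$). First I would observe that $Y(u,x_1)Y(v,x_2)w$ is unambiguously an element of $W((x_1))((x_2))$: by Corollary \ref{Product-thm} it is computed with $x_1,x_2$ independent, and acting $Y(u,x_1)$ on each homogeneous coefficient of $Y(v,x_2)w\in W((x_2))$ produces only finitely many negative powers of $x_1$ by the lower-bound condition. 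Similarly $Y(Y(u,x)v,y)w$ is unambiguously computed by Corollary \ref{Iterate-thm}.

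The next step is a term-by-term comparison of the two formulas under the substitution $x_1=x+y$, $x_2=y$. The fully-normal-ordered summands agree: by the formal Taylor theorem $a_i(x+y)=\sum_{k\ge 0}a_i^{(k)}(y)x^k$ (exactly the computation in the proof of Corollary \ref{Iterate-thm}), so $:a_1^{(m_1)}(x+y)\cdots b_s^{(n_s)}(y):$ coincides with the normal-ordered part appearing in the iterate. The contraction scalars agree as well, since
\[
f_{m_in_j}(x+y,y)=\binom{-n_j-1}{m_i}\bigl((x+y)-y\bigr)^{-m_i-n_j-1}=\binom{-n_j-1}{m_i}x^{-m_i-n_j-1}=(x^{-n_j-1})^{(m_i)},
\]
which is precisely the scalar in Corollary \ref{Iterate-thm}. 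Hence the whole content reduces to showing that the substitution $x_1\mapsto x+y$ may legitimately be carried out on $Y(u,x_1)Y(v,x_2)w$, and this is where the prefactor is needed.

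The heart of the argument is a bound, independent of $v$, on the order of the pole at $x+y=0$ of $\langle w',Y(u,x_1)Y(v,x_2)w\rangle$ for $w'\in W'$. Inspecting Corollary \ref{Product-thm}, the contraction factors carry only poles along $x_1=x_2$ (i.e.\ $x=0$), so every pole along $x_1=0$ comes from the positive-mode parts $a_i^{(m_i)}(x_1)^+=\sum_{k\ge 0}\binom{-k-1}{m_i}a_i(k+1/2)x_1^{-k-m_i-1}$ of the $a$-factors acting on $w$. A positive mode $a_i(k+1/2)$ lowers the weight by $k+\frac12$ while carrying $x_1^{-k-m_i-1}$; since $w$ is homogeneous and weights are bounded below, the modes that act nontrivially lower the total weight by at most $\wt(w)$. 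If $p\le r$ of them act, the accumulated order of the pole in $x_1$ is $\sum(k_t+m_{i_t}+1)\le \wt(w)+m_1+\cdots+m_r+r=P$, using $\sum k_t\le\wt(w)$, $\sum m_{i_t}\le\sum_i m_i$ and $p\le r$. Crucially the data of $v$ enter only through the $x_2$-behaviour and the contractions, never through these $x_1=0$ poles, so $P$ depends only on $u$ and $w$, as asserted.

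Finally I would invoke the standard formal-calculus fact: the two sides are the expansions, in the regions $|x+y|>|y|$ and $|y|>|x|$ respectively, of a single rational function whose only poles lie at $x+y=0$, $y=0$, $x=0$; once $(x+y)^P$ removes the pole at $x+y=0$ (legitimate since $P$ exceeds its order), the two expansions coincide, giving
\[
(x+y)^P\,Y(u,x+y)Y(v,y)w=(x+y)^P\,Y(Y(u,x)v,y)w,
\]
and since $w'\in W'$ was arbitrary the identity holds in $W$. I expect the main obstacle to be precisely the third step: extracting the pole-order bound in the exact shape $\wt(w)+m_1+\cdots+m_r+r$ with the correct independence from $v$, and keeping careful track of the two expansion regions so that the substitution $x_1=x+y$ is provably valid after multiplication by $(x+y)^P$.
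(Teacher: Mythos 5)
Your proposal is correct and follows essentially the same route as the paper's proof: both compare the product formula (Corollary \ref{Product-thm}) evaluated at $(x+y,y)$ with the iterate formula (Corollary \ref{Iterate-thm}), observe that the contraction factors coincide because $f_{m_in_j}(x+y,y)=(x^{-n_j-1})^{(m_i)}$, and bound the order of the pole at $x+y=0$ by $P$, independently of $v$, via the weight-lowering of positive modes against the lower bound $0$ of the grading of $V$ --- your explicit weight count being precisely the justification the paper leaves implicit when it asserts that no negative powers of $x+y$ survive. The remaining differences are cosmetic: the paper concludes purely formally (once $(x+y)^P$ clears the singular part, $x$ and $y$ may be freely interchanged), whereas you pass to matrix coefficients and match two expansions of a common rational function; note only that your displayed Taylor identity should read $a_i(y+x)=\sum_{k\geq 0}a_i^{(k)}(y)x^k$ rather than $a_i(x+y)$, since the inequivalence of these two expansions as formal series is exactly what the prefactor is needed to neutralize.
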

\begin{proof}
    Notice that 
    $$f_{mn}(x+y, y) = \binom{-n-1}m (x+y-y)^{-n-m-1} = \binom{-n-1}m x^{-n-m-1} = (x^{-n-1})^{(m)}. $$
    Then from the product formula (\ref{Product-formula}) in Corollary \ref{Product-thm}, we see that 
    \begin{align}
        & Y(a_1(-m_1-1/2)\cdots a_r(-m_r-1/2)\one, x+y)Y(b_1(-n_1-1/2)\cdots b_s(-n_s-1/2)\one,y) \nonumber\\
        = \ & :a_1^{(m_1)}(x+y)\cdots a_r^{(m_r)}(x+y)b_1^{(n_1)}(y)\cdots b_s^{(n_s)}(y):\nonumber \\
        & + \sum_{\rho=1}^{\min(r,s)}\sum_{\substack{ 1 \leq i_1 < \cdots < i_\rho\leq r\\ 1 \leq j_1 < \cdots < j_\rho \leq s}}(-1)^{i_1+\cdots + i_\rho + j_1+\cdots + j_\rho }(-1)^{r\rho + \rho(\rho+1)/2}\nonumber \\
        & \hspace{9.7 em}\cdot \begin{vmatrix}
            (a_{i_1}, b_{j_1})(x^{-n_{j_1}-1})^{(m_{i_1})} & \cdots & (a_{i_1}, b_{j_\rho})(x^{-n_{j_\rho}-1})^{(m_{i_1})}\\
            \vdots & & \vdots\nonumber  \\
            (a_{i_\rho}, b_{j_1})(x^{-n_{j_1}-1})^{(m_{i_\rho})} & \cdots & (a_{i_\rho}, b_{j_\rho})(x^{-n_{j_\rho}-1})^{(m_{i_\rho})} 
        \end{vmatrix}\nonumber \\
        & \hspace{10 em}\cdot :a_{1}^{(m_1)}(x+y) \cdots \reallywidehat{a_{i_1}^{(m_{i_1})}(x+y)} \cdots \reallywidehat{a_{i_\rho}^{(m_{i_\rho})}(x+y)} \cdots a_{r}^{(m_r)}(x+y)\nonumber  \\
        & \hspace{11 em}\cdot b_{1}^{(n_1)}(y) \cdots \reallywidehat{b_{j_1}^{(n_{j_1})}(y)} \cdots \reallywidehat{b_{j_\rho}^{(n_{j_\rho})}(y)} \cdots b_{s}^{(n_s)}(y):\label{LHS-weak-assoc}
    \end{align}
    which differs from the iterate formula (\ref{Iterate-formula}) by transposing $x$ and $y$ in the arguments of series $a_1^{(m_1)}, ..., a_r^{(m_r)}$. If we act both (\ref{LHS-weak-assoc}) and (\ref{Iterate-1}) on any homogeneous element $w\in W$ and multiply them with $(x+y)^P$, where 
    $$P = \wt(w) + m_1+1 + \cdots + m_r + 1 $$
    that is independent of $b_1, ..., b_s$ and $n_1, ..., n_s$, then the two resulting series do not contain negative powers of $(x+y)$. So we may freely interchange $x$ and $y$. Consequently, the resulting series are equal. 
\end{proof}

\section{The operator $\exp(\Delta(x))$ on $V$}

To prepare for the construction the $\Z_2$-twisted $V$-module, using the idea from \cite{FLM} and \cite{FFR}, we need an operator $\exp(\Delta(x))$ on $V$. In this section we will study some properties of the operator.

\subsection{Definition of $\Delta(x)$} Recall that $\h = \mathfrak{p} \oplus \mathfrak{q}$ is polarized as a direct sum of maximal isotropic subspaces. Let $e_1, ..., e_M$ be a basis of $\mathfrak{p}$, $\bar e_1, ..., \bar e_M$ be a basis of $\mathfrak{q}$ such that 
$$(e_i, \bar e_j) = \delta_{ij}, (e_i, e_j) = (\bar e_i, \bar e_j) = 0, i,j = 1, ..., M. $$
Define
$$\Delta(x) = \sum_{i=1}^M\sum_{m,n \geq 0}C_{mn} e_i(m+1/2) \bar e_i(n+1/2) x^{-m-n-1}. $$
To make sure that $\Delta(x)$ is independent of the choice of polarizations and basis, from the anti-commutativity of the positive modes, it is necessary and sufficient that $C_{mn} = - C_{nm}$. 

\begin{prop}\label{Delta(x)-Properties}
    \begin{align*}
    (1) \quad &[\Delta(x), a(-n-1/2)] = \sum_{m\geq 0} C_{mn} a(m+1/2) x^{-m-n-1}. \\
    (2) \quad & \Delta(x) b_1(-n_1-1/2) \cdots b_s(-n_s-1/2) \one \\
    & = \sum_{1\leq p < q \leq s} (-1)^{p+q} C_{n_p n_q} (b_p, b_q) x^{-n_p-n_q-1} \\
    & \qquad \qquad \quad \cdot b_1(-n_1-1/2) \cdots \reallywidehat{b_p(-n_p-1/2)} \cdots \reallywidehat{b_q(-n_q-1/2)}\cdots b_s(-n_s-1/2)\one.
\end{align*}
\end{prop}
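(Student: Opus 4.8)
The plan is to prove part (1) by a direct termwise computation and then deduce part (2) by induction on $s$, feeding in part (1) at each step. For part (1), I first observe that each summand $e_i(m+1/2)\bar e_i(n+1/2)$ of $\Delta(x)$ is a product of two odd modes, so $\Delta(x)$ is an even operator and the bracket in the statement is an ordinary commutator. I would then apply the purely algebraic identity $[AB,C]=A\{B,C\}-\{A,C\}B$ with $A=e_i(m+1/2)$, $B=\bar e_i(n'+1/2)$, $C=a(-n-1/2)$, so that only the contractions $\{\bar e_i(n'+1/2),a(-n-1/2)\}=(\bar e_i,a)\delta_{n'n}$ and $\{e_i(m+1/2),a(-n-1/2)\}=(e_i,a)\delta_{mn}$ from \eqref{anticom-modes} survive. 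Summing over $i$ and using the duality relations $\sum_i(\bar e_i,a)e_i$ and $\sum_i(e_i,a)\bar e_i$, which recover the $\mathfrak{p}$- and $\mathfrak{q}$-components of $a$, gives two contributions; the antisymmetry $C_{mn}=-C_{nm}$ is exactly what is needed to flip the dummy index in the second contribution so that the two components reassemble into $a$ itself, yielding $\sum_{m\geq 0}C_{mn}a(m+1/2)x^{-m-n-1}$.

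For part (2), I would induct on $s$. Since $\Delta(x)$ is built entirely from positive modes, $\Delta(x)\one=0$, which disposes of the cases $s=0,1$. For the inductive step I would apply the operator identity
$$\Delta(x)b_1\!\left(-n_1-\tfrac12\right)=\bigl[\Delta(x),b_1\!\left(-n_1-\tfrac12\right)\bigr]+b_1\!\left(-n_1-\tfrac12\right)\Delta(x)$$
to $b_2(-n_2-1/2)\cdots b_s(-n_s-1/2)\one$. The second term is handled immediately by the induction hypothesis applied to the shorter word $b_2,\dots,b_s$: after the position shift $j\mapsto j-1$ the sign $(-1)^{p'+q'}$ becomes $(-1)^{p+q}$, and prepending $b_1(-n_1-1/2)$ reproduces precisely the $p\geq 2$ terms of the target. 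For the first term I would substitute part (1), obtaining $\sum_{m\geq 0}C_{mn_1}b_1(m+1/2)x^{-m-n_1-1}$ acting on $b_2(-n_2-1/2)\cdots b_s(-n_s-1/2)\one$, and then anticommute the positive mode $b_1(m+1/2)$ rightward until it annihilates the vacuum. Only the single contractions $\{b_1(m+1/2),b_q(-n_q-1/2)\}=(b_1,b_q)\delta_{mn_q}$ survive, each carrying the sign $(-1)^q$ coming from passing the $q-2$ intervening modes; setting $m=n_q$ and using $C_{n_qn_1}=-C_{n_1n_q}$ produces exactly the $p=1$ terms. Adding the two contributions gives the full sum over $1\leq p<q\leq s$.

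The main obstacle is not conceptual but lies in the sign bookkeeping in part (2): one must verify that the sign $(-1)^q$ from moving $b_1(m+1/2)$ past the intermediate modes, combined with the sign reversal coming from the antisymmetry of $C$, reconstitutes the target sign $(-1)^{p+q}$ at $p=1$, and that the re-indexing of the inductive hypothesis leaves the stated signs intact. I would organize the computation so that each contraction is tracked by the position it removes, which keeps this accounting transparent and confines the only delicate point to a pair of index substitutions.
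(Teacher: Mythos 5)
Your proposal is correct and follows essentially the same route as the paper: part (1) via the identity $[AB,C]=A\{B,C\}-\{A,C\}B$ together with the completeness relation for the dual bases and the antisymmetry $C_{mn}=-C_{nm}$, and part (2) by induction on $s$, splitting off $[\Delta(x),b_1(-n_1-1/2)]$ and anticommuting the resulting positive mode to the vacuum. The sign bookkeeping you describe ($(-1)^{q-2}$ from the intervening modes combined with the flip $C_{n_qn_1}=-C_{n_1n_q}$ giving $(-1)^{1+q}$, and the index shift in the inductive term) matches the paper's computation exactly.
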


\begin{proof}
    \begin{enumerate}
        \item We compute as follows
        \begin{align*}
            & \Delta(x) a(-n-1/2) \\
            = \ & \sum_{i=1}^M\sum_{m,n_1 \geq 0}C_{mn_1} x^{-m-n_1-1} e_i(m+1/2) \bar e_i(n_1+1/2) a(-n-1/2)\\
            = \ & \sum_{i=1}^M\sum_{m,n_1 \geq 0}C_{mn_1} x^{-m-n_1-1}  a(-n-1/2) e_i(m+1/2) \bar e_i(n_1+1/2)\\
            & + \sum_{i=1}^M\sum_{m,n_1 \geq 0}C_{mn_1} x^{-m-n_1-1}   e_i(m+1/2) \{\bar e_i(n_1+1/2), a(-n-1/2)\}\\
            & -  \sum_{i=1}^M\sum_{m,n_1 \geq 0}C_{mn_1} x^{-m-n_1-1}   \{e_i(m+1/2), a(-n-1/2)\} \bar e_i(n_1+1/2)\\
            = \ & a(-n-1/2)\Delta(x) + \sum_{i=1}^M\sum_{m\geq 0}C_{mn} x^{-m-n-1}   e_i(m+1/2) (a, \bar e_i)\\
            & -  \sum_{i=1}^M\sum_{m\geq 0}C_{nm} x^{-n-m-1}   (a, e_i) \bar e_i(m+1/2)
        \end{align*}
        where we replaced $n_1$ by $m$ in the third sum. Using the fact that $C_{mn}=-C_{nm}$, we see that 
        \begin{align*}
            [\Delta(x), a(-n-1/2)] = \ & \sum_{m\geq 0} C_{mn} \sum_{i=1}^M \left((a, \bar e_i)e_i(m+1/2) + (a, e_i) \bar e_i(m+1/2)\right) x^{-m-n-1}\\
            = \ & C_{mn} a(m+1/2) x^{-m-n-1}. 
        \end{align*}
        \item The result clearly holds when $s=0$ and $s=1$. Assuming the result for all smaller $s$, we compute as follows 
        \begin{align}
            & \Delta(x) b_1(-n_1-1/2) \cdots b_s(-n_s-1/2) \one \nonumber\\
            = \ & [\Delta(x), b_1(-n_1-1/2)] b_2(-n_2-1/2)\cdots b_s(-n_s-1/2)\one \nonumber\\
            & + b_1(-n_1-1/2) \Delta(x)b_2(-n_2-1/2)\cdots b_s(-n_s-1/2)\one \nonumber\\
            = \ & \sum_{m\geq 0} C_{mn_1}b_1(m+1/2) x^{-m-n_1-1} b_2(-n_2-1/2)\cdots b_s(-n_s-1/2)\one\label{Delta(x)-1}\\
            & + b_1(-n_1-1/2) \sum_{2\leq p<q \leq s}(-1)^{p-1+q-1}C_{n_pn_q}(b_p, b_q) x^{-n_p-n_q-1} \nonumber\\
            & \qquad \qquad \qquad \qquad \qquad \quad \cdot b_2(-n_2-1/2) \cdots \reallywidehat{b_p(-n_p-1/2)} \cdots \reallywidehat{b_q(-n_q-1/2)}\cdots b_s(-n_s-1/2)\one \label{Delta(x)-2}
        \end{align}
        We focus on (\ref{Delta(x)-1}). 
        \begin{align}
            (\ref{Delta(x)-1}) = \ & \sum_{m\geq 0} \sum_{q = 2}^s C_{mn_1} (-1)^{q-2} b_2(-n_2-1/2)\cdots\{b_1(m+1/2), b_q(-n_q-1/2)\} b_s(-n_s-1/2)\one  x^{-m-n_1-1}\nonumber \\
            = \ &  \sum_{q = 2}^sC_{n_qn_1}  (-1)^{q} (b_1, b_q) b_2(-n_2-1/2)\cdots\reallywidehat{b_q(-n_q-1/2)} b_s(-n_s-1/2)\one  x^{-m-n_1-1} \nonumber\\
            = \ & \sum_{q = 2}^s (-1)^{q-1} C_{n_1n_q} (b_1, b_q) b_2(-n_2-1/2)\cdots\reallywidehat{b_q(-n_q-1/2)} b_s(-n_s-1/2)\one  x^{-m-n_1-1} \label{Delta(x)-3}
        \end{align}
        The conclusion then follows by combining (\ref{Delta(x)-2}) and (\ref{Delta(x)-3}). 
    \end{enumerate}
\end{proof}

\subsection{The powers and the exponential of $\Delta(x)$}

\begin{nota}\label{Index-Notation-1}
    To avoid the clumsy iterated subscripts, for $a_1, ...., a_r\in \h$, $m_1, ..., m_r\in \N$ and $1\leq i_1< ...< i_{k}\leq r$, we introduce the following notations. 
    \begin{enumerate}
        \item For $1\leq p < q \leq k$, the notation $$\langle i_p, i_q \rangle =(a_{i_p}, a_{i_q}) C_{m_{i_p}m_{i_q}}$$ 
        stands for a number in $\C$. 
        \item The notation 
        $$S_{a_1,... a_r}^{m_1, ..., m_r}(i_1, ..., i_k) = a_{i_1}(-m_{i_1}-1/2)\cdots a_{i_k}(-m_{i_k}-1/2)\one$$
        stands for an element in $V$
%        \item The notation
%        $$S(x)_{a_1,... a_r}^{m_1, ..., m_r}(i_1, ..., i_k) = a_{i_1}^{(m_{i_1})}(x)^-\cdots a_{i_k}^{(m_{i_k})}(x)^-$$
%        stands for a series in $\End(V)[[x]]$. 
    \end{enumerate}
    These notations will be used in the proofs and in the computations. We will not use them in the statements of the definitions, lemmas, propositions and theorems. 
    
\end{nota}

\begin{defn}
    Let $a_1, ..., a_r\in \h, m_1, ..., m_r\in \N$. Then for each $t\in \Z_+$ increasing sequence $1\leq i_1 < \cdots < i_{2t}$, we define the \textit{total contraction number} $T_{a_1, ..., a_r}^{m_1, ..., m_r}(i_1, ..., i_{2t})$ recursively by
    \begin{align*}
    T_{a_1,...,a_r}^{m_1,...,m_r}(i_1, i_2) &= (a_{i_1},a_{i_2})C_{m_{i_1}m_{i_2}}\\
    T_{a_1,...,a_r}^{m_1,...,m_r}(i_1,..., i_{2t}) &= \sum_{k=2}^{2t} (-1)^{k}(a_{i_1}, a_{i_k}) C_{m_{i_1}m_{i_k}} T^{m_1\cdots m_r}_{a_1,...,a_r}(\widehat{i_1}, i_2, ..., \widehat{i_k}, i_{k+1}, ..., i_{2t}). 
\end{align*}
For example, 
\begin{align*}
    T_{a_1,...,a_r}^{m_1,...,m_r}(i_1, i_2,i_3,i_4)&=(a_{i_1},a_{i_2})C_{m_{i_1}m_{i_2}}(a_{i_3},a_{i_4})C_{m_{i_3}m_{i_4}}-(a_{i_1},a_{i_3})C_{m_{i_1}m_{i_3}}(a_{i_2},a_{i_4})C_{m_{i_2}m_{i_4}}\\
    & \quad +(a_{i_1},a_{i_4})C_{m_{i_1}m_{i_4}}(a_{i_2},a_{i_3})C_{m_{i_2}m_{i_3}}
\end{align*}
\end{defn}

\begin{prop}\label{Contraction-Number-t-recursion}
    The total contraction number $T_{a_1,...,a_r}^{m_1,...,m_r}(i_1, ..., i_{2t})$ satisfies the following alternative recursion:
    \begin{align*}
        T_{a_1,...,a_r}^{m_1,...,m_r}(i_1, ..., i_{2t}) = \frac 1 t \sum_{1\leq \alpha < \beta \leq 2t} (-1)^{\alpha+\beta-1} (a_{i_\alpha}, a_{i_\beta}) C_{m_{i_\alpha}m_{i_\beta}}T_{a_1,...,a_r}^{m_1,...,m_r}(i_1, ..., \widehat{i_\alpha}, ..., \widehat{i_\beta},...,i_{2t})
    \end{align*}
\end{prop}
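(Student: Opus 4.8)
The plan is to recognize the total contraction number as the Pfaffian of an antisymmetric matrix and to read off both recursions as Pfaffian row-expansions. Fix $1\le i_1<\cdots<i_{2t}$ and form the $2t\times 2t$ matrix $A$ with $A_{pq}=\langle i_p,i_q\rangle=(a_{i_p},a_{i_q})C_{m_{i_p}m_{i_q}}$ (Notation \ref{Index-Notation-1}). Since the bilinear form is symmetric while $C_{mn}=-C_{nm}$, we get $A_{qp}=-A_{pq}$ and $A_{pp}=0$, so $A$ is antisymmetric. The first step is to prove, by induction on $t$, that $T_{a_1,\dots,a_r}^{m_1,\dots,m_r}(i_1,\dots,i_{2t})=\mathrm{Pf}(A)$: the base case $t=1$ reads $T(i_1,i_2)=\langle i_1,i_2\rangle$, and the defining recursion is verbatim the Laplace-type expansion of $\mathrm{Pf}(A)$ along its first row, the deletion of $i_1,i_k$ matching the deletion of rows and columns $1,k$.

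With $T=\mathrm{Pf}(A)$ in hand, I would rewrite the proposed identity entirely in terms of the combinatorial (perfect-matching) description $\mathrm{Pf}(A)=\sum_M\varepsilon(M)\prod_{\{p,q\}\in M}A_{pq}$, where $M$ runs over perfect matchings of $\{1,\dots,2t\}$ and $\varepsilon(M)=(-1)^{\mathrm{cr}(M)}$ is the crossing-number sign. Applying the induction result to each complement, the right-hand side of the claim becomes $\frac1t\sum_{\alpha<\beta}(-1)^{\alpha+\beta-1}\langle i_\alpha,i_\beta\rangle\,\mathrm{Pf}(A_{\widehat\alpha\widehat\beta})$. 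Expanding each $\mathrm{Pf}(A_{\widehat\alpha\widehat\beta})$ as a matching sum over the complement, the key sign identity (below) turns the summand indexed by $(\alpha,\beta,M')$ into $\varepsilon(M)\prod_M A$ with $M=M'\cup\{\alpha,\beta\}$. Thus summing over all pairs $\alpha<\beta$ enumerates each matching $M$ once for each of its $t$ edges, so the double sum equals $t\,\mathrm{Pf}(A)$, and the factor $\frac1t$ recovers $\mathrm{Pf}(A)=T$.

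The only real content, and the step I expect to be the main obstacle, is the sign identity $\varepsilon(M)=(-1)^{\alpha+\beta-1}\varepsilon(M')$ relating a matching $M\ni\{\alpha,\beta\}$ (with $\alpha<\beta$) to its restriction $M'$ to the complement. Because crossings are preserved under order-preserving relabeling of the complement, $\mathrm{cr}(M)-\mathrm{cr}(M')$ equals the number $k$ of edges of $M$ crossing $\{\alpha,\beta\}$, that is, the number of positions $p$ with $\alpha<p<\beta$ whose partner lies outside the interval $[\alpha,\beta]$. The remaining interior positions pair off among themselves, so $(\beta-\alpha-1)-k$ is even; hence $k\equiv\beta-\alpha-1\equiv\alpha+\beta-1\pmod2$ and $(-1)^k=(-1)^{\alpha+\beta-1}$, as needed.

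Should a Pfaffian-free argument be preferred, the same statement can be obtained by a direct induction on $t$ from the defining recursion alone: split the target double sum into the $\alpha=1$ part, which is already $T(i_1,\dots,i_{2t})$ by definition, and the $\alpha\ge2$ part, to which one applies the defining recursion (expanding each surviving $T$ along its first slot $i_1$) together with the induction hypothesis. The bookkeeping of signs produced when $i_1$ is pulled past the two deleted slots reduces to the same parity computation as above, so I expect the matching picture to give the cleanest write-up and the crossing-parity identity to be the crux in either approach.
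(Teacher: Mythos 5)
Your proof is correct, and it takes a genuinely different route from the paper's. The paper argues by direct induction on $t$: it expands $T(i_1,\dots,i_{2t})$ along $i_1$ via the defining recursion, applies the induction hypothesis to each inner term, splits the resulting double sum into three regions according to where the deleted pair sits relative to $k$, re-indexes each region back to the labels $i_1,\dots,i_{2t}$, and recognizes the inner sum over $k$ as the defining recursion of $T(i_1,\dots,\widehat{i_\alpha},\dots,\widehat{i_\beta},\dots,i_{2t})$; this yields the identity with factor $\frac{1}{t-1}$ over $2\le\alpha<\beta\le 2t$, and multiplying by $t-1$ and adding one more copy of the defining recursion (the $\alpha=1$ terms) produces the stated factor $\frac1t$. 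You instead identify $T$ with the Pfaffian of the antisymmetric matrix $A_{pq}=\langle i_p,i_q\rangle$ (antisymmetry from $(\cdot,\cdot)$ symmetric and $C_{mn}=-C_{nm}$), pass to the perfect-matching expansion, prove the crossing-parity lemma $\varepsilon(M)=(-1)^{\alpha+\beta-1}\varepsilon(M')$, and finish by double counting: each matching is reached once per each of its $t$ edges, which is exactly where the $\frac1t$ comes from. Your route is conceptually cleaner --- it explains the factor $\frac1t$ transparently, and it makes rigorous (and then exploits) the graphical interpretation that the paper only states as a remark after the proposition; your crossing-parity lemma specialized to $\alpha=1$ is also precisely what justifies that remark. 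What the paper's route buys is self-containedness --- no Pfaffian facts are imported --- and reusability: the same three-region index-splitting manipulation is invoked again in the very next proof (the formula for $\Delta(x)^t/t!$ acting on basis elements), where the paper explicitly refers back to this argument. Your fallback sketch (the ``Pfaffian-free'' induction) is essentially the paper's proof, so the sign bookkeeping you anticipated there is exactly what the paper carries out.
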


\begin{proof}
    With Notation \ref{Index-Notation-1}, the recursion defining $T_{a_1,...,a_r}^{m_1, ..., m_r}(i_1, ..., i_{2t})$ is expressed as 
    \begin{align*}
        T_{a_1,...,a_r}^{m_1, ..., m_r}(i_1, i_2) &= \langle i_1, i_2\rangle\\
        T_{a_1,...,a_r}^{m_1, ..., m_r}(i_1, ..., i_{2t}) &= \sum_{k=2}^{2t} (-1)^k \langle i_1, i_k\rangle T_{a_1,...,a_r}^{m_1, ..., m_r}(\widehat{i_1}, i_2, ..., \widehat{i_k},..., i_{2t})
    \end{align*}
    We argue by induction. The conclusion clearly holds when $t=1$. Assume the conclusion for all smaller $t$. For every fixed $k=2, ..., 2t$, we define the series
    $$j_1^{(k)} = i_2, ..., j_{k-1}^{(k)} = i_{k-1}, j_1^{(k)}(k-1) = i_{k+1}, ..., j_1^{(k)}(2t-2) = i_{2t}$$
    Then by definition and the induction hypothesis,
    \begin{align*}
        & T_{a_1,...,a_r}^{m_1, ..., m_r}(i_1, ..., i_{2t}) = \sum_{k=2}^{2t} (-1)^k \langle i_1, i_k\rangle T_{a_1,...,a_r}^{m_1, ..., m_r}(j_1^{(k)}, ..., j_{2t-2}^{(k)}) \\
        = \ & \frac 1 {t-1} \sum_{k=2}^{2t} (-1)^k \langle i_1, i_k\rangle \sum_{1\leq \alpha < \beta \leq 2t-2} (-1)^{\alpha+\beta-1}\langle j_\alpha^{(k)}, j_\beta^{(k)}\rangle T_{a_1,...,a_r}^{m_1, ..., m_r}(j_1^{(k)}, ..., \widehat{j_\alpha^{(k)}}, ..., \widehat{j_\beta^{(k)}}, ..., j_{2t-2}^{(k)})
    \end{align*}
    Using the fact 
    \begin{align*}
        \sum_{k=2}^{2t}\sum_{1\leq \alpha < \beta < 2t-2} & = \sum_{k=2}^{2t}\sum_{1\leq \alpha < \beta <  k-1} + \sum_{k=2}^{2t}\sum_{1\leq \alpha < k-1 \leq \beta < 2t-2} + \sum_{k=2}^{2t}\sum_{k-1\leq \alpha < \beta < 2t-2}\\
        & = \sum_{k=2}^{2t}\sum_{1\leq \alpha < \beta \leq  k-2} + \sum_{k=2}^{2t}\sum_{1\leq \alpha \leq k-2}\sum_{k-1\leq \beta < 2t-2} + \sum_{k=2}^{2t}\sum_{k-1\leq \alpha < \beta < 2t-2},            
    \end{align*}
    we separate the sum into three parts that allows us to rewrite the indices in terms of $i_1, ..., i_{2t}$, then analyze them individually. 
    \begin{enumerate}
        \item For the first part, we compute as follows. 
        \begin{align*}
            & \frac 1 {t-1}\sum_{k=2}^{2t}\sum_{1\leq \alpha < \beta \leq  k-2}(-1)^k  \langle i_1, i_k\rangle \\
            & \qquad \cdot (-1)^{\alpha+\beta-1} \langle i_{\alpha+1}, i_{\beta+1}\rangle T_{a_1,...,a_r}^{m_1, ..., m_r}(i_2,..., \widehat{i_{\alpha+1}},..., \widehat{i_{\beta+1}}, ...,\widehat{i_k},..., i_{2t})\\
            = & \frac 1 {t-1}\sum_{k=2}^{2t}\sum_{2\leq \alpha < \beta \leq k-1} (-1)^k  \langle i_1, i_k\rangle\\
            & \qquad  \cdot (-1)^{\alpha+\beta-1} \langle i_{\alpha}, i_{\beta}\rangle T_{a_1,...,a_r}^{m_1, ..., m_r}(i_2,..., \widehat{i_{\alpha}},..., \widehat{i_{\beta}}, ...,\widehat{i_k},..., i_{2t})\\
            = & \frac 1 {t-1}\sum_{2\leq \alpha < \beta < k \leq 2t}(-1)^{\alpha+\beta+k-1}  \langle i_1, i_k\rangle  \langle i_{\alpha}, i_{\beta}\rangle T_{a_1,...,a_r}^{m_1, ..., m_r}(i_2,..., \widehat{i_{\alpha}},..., \widehat{i_{\beta}}, ...,\widehat{i_k},..., i_{2t})\\
            = & \frac 1 {t-1}\sum_{2\leq \alpha < \beta \leq 2t}(-1)^{\alpha+\beta-1}\langle i_{\alpha}, i_{\beta}\rangle \sum_{k=\beta+1}^{2t}(-1)^{k}  \langle i_1, i_k\rangle  T_{a_1,...,a_r}^{m_1, ..., m_r}(i_2,..., \widehat{i_{\alpha}},..., \widehat{i_{\beta}}, ...,\widehat{i_k},..., i_{2t})
        \end{align*}
        \item For the second part, we compute as follows 
        \begin{align*}
            & \frac 1 {t-1}\sum_{k=2}^{2t}\sum_{1\leq \alpha \leq k-2}\sum_{k-1\leq \beta \leq  2t-2}(-1)^k  \langle i_1, i_k\rangle \\
            & \qquad \cdot (-1)^{\alpha+\beta-1} \langle i_{\alpha+1}, i_{\beta+2}\rangle T_{a_1,...,a_r}^{m_1, ..., m_r}(i_2,..., \widehat{i_{\alpha+1}},..., \widehat{i_k},..., \widehat{i_{\beta+2}}, ...,i_{2t})\\
            = & \frac 1 {t-1}\sum_{k=2}^{2t}\sum_{2\leq \alpha \leq k-1}\sum_{k+1\leq \beta \leq 2t} (-1)^k  \langle i_1, i_k\rangle\\
            & \qquad  \cdot (-1)^{\alpha+\beta} \langle i_{\alpha}, i_{\beta}\rangle T_{a_1,...,a_r}^{m_1, ..., m_r}(i_2,..., \widehat{i_{\alpha}},..., \widehat{i_{\beta}}, ...,\widehat{i_k},..., i_{2t})\\
            = & \frac 1 {t-1}\sum_{2\leq \alpha < k < \beta \leq 2t}(-1)^{\alpha+\beta+k}  \langle i_1, i_k\rangle  \langle i_{\alpha}, i_{\beta}\rangle T_{a_1,...,a_r}^{m_1, ..., m_r}(i_2,..., \widehat{i_{\alpha}},..., \widehat{i_k},..., \widehat{i_{\beta}}, ..., i_{2t})\\
            = & \frac 1 {t-1} \sum_{2\leq \alpha < \beta \leq 2t} (-1)^{\alpha+\beta-1} \langle i_\alpha, i_\beta\rangle  \sum_{k=\alpha+1}^{\beta-1} (-1)^{k-1}\langle i_1, i_k\rangle  T_{a_1,...,a_r}^{m_1, ..., m_r}(i_2,..., \widehat{i_{\alpha}},..., \widehat{i_k},..., \widehat{i_{\beta}}, ..., i_{2t})
        \end{align*}
        \item For the third part, we compute as follows 
        \begin{align*}
            & \frac 1 {t-1}\sum_{k=2}^{2t}\sum_{k-1\leq \alpha < \beta \leq 2t-2 }(-1)^k  \langle i_1, i_k\rangle \\
            & \qquad \cdot (-1)^{\alpha+\beta-1} \langle i_{\alpha+2}, i_{\beta+2}\rangle T_{a_1,...,a_r}^{m_1, ..., m_r}(i_2,..., \widehat{i_{k}},..., \widehat{i_{\alpha+2}},..., \widehat{i_{\beta+2}}, ...,i_{2t})\\
            = & \frac 1 {t-1}\sum_{k=2}^{2t}\sum_{k+1\leq \alpha < \beta \leq 2t }(-1)^k  \langle i_1, i_k\rangle \\
            & \qquad \cdot (-1)^{\alpha+\beta-1} \langle i_{\alpha}, i_{\beta}\rangle T_{a_1,...,a_r}^{m_1, ..., m_r}(i_2,..., \widehat{i_{k}},..., \widehat{i_{\alpha}},..., \widehat{i_{\beta}}, ...,i_{2t})\\
            = & \frac 1 {t-1}\sum_{2\leq k <\alpha < \beta \leq 2t } (-1)^{\alpha+\beta+k-1}  \langle i_1, i_k\rangle \langle i_{\alpha}, i_{\beta}\rangle T_{a_1,...,a_r}^{m_1, ..., m_r}(i_2,..., \widehat{i_{k}},..., \widehat{i_{\alpha}},..., \widehat{i_{\beta}}, ...,i_{2t})\\
            = & \frac 1 {t-1}\sum_{2 \leq\alpha < \beta \leq 2t } (-1)^{\alpha+\beta-1}  \langle i_\alpha, i_\beta\rangle \sum_{k-2}^{\alpha-1} (-1)^k \langle i_1, i_k\rangle T_{a_1,...,a_r}^{m_1, ..., m_r}(i_2,..., \widehat{i_{k}},..., \widehat{i_{\alpha}},..., \widehat{i_{\beta}}, ...,i_{2t})
        \end{align*}
    \end{enumerate}
    Combining the formulas obtained in (1), (2) and (3), and noticing that in the final expression, for each fixed $\alpha, \beta$, the summation over $k$ precisely goes over all possible indices other than $\alpha, \beta$, from the definition of $T_{a_1,...,a_r}^{m_1, ..., m_r}$, we may conclude that 
    \begin{align*}
        T_{a_1,...,a_r}^{m_1, ..., m_r}(i_1, ..., i_{2t}) = \frac 1{t-1}\sum_{2\leq \alpha < \beta \leq 2t} (-1)^{\alpha+\beta-1} T_{a_1,...,a_r}^{m_1, ..., m_r}(i_1, ..., \widehat{i_\alpha}, ..., \widehat{i_\beta} ..., i_{2t})
    \end{align*}
    Multiply both sides by $t-1$, add up with another copy of $T_{a_1,...,a_r}^{m_1, ..., m_r}(i_1, ..., i_{2t})$ in terms of recursion of $i_1$, we may conclude that 
    \begin{align*}
        t\cdot T_{a_1,...,a_r}^{m_1, ..., m_r}(i_1, ..., i_{2t}) = \sum_{1\leq \alpha < \beta \leq 2t} (-1)^{\alpha+\beta-1} T_{a_1,...,a_r}^{m_1, ..., m_r}(i_1, ..., \widehat{i_\alpha}, ..., \widehat{i_\beta} ..., i_{2t}),
    \end{align*}
    which is what we want. 
\end{proof}

\begin{rema}
    Indeed, the total contraction number admits a geometric interpretation: Introduce a graph with vertices labeled by $i_1, ..., i_{2t}$ and with $t$ edges. Each pair of vertices linked by exactly one edge. We may associate a number to such a graph by the following rule: 
    \begin{enumerate}
        \item Each edge linking the indices $i$ and $j$ contributes a factor of $[i,j]$ (as the notation in the proof of Proposition \ref{Contraction-Number-t-recursion}); 
        \item Each intersection of two edges contributes to a $(-1)$ factor. 
    \end{enumerate}
    For example, here is the association of all possible graphs with 4 vertices: 
    \begin{center}
    \raisebox{-.3 \height}{\begin{tikzpicture}[line cap=round,line join=round,>=triangle 45,x=1cm,y=1cm, scale = 0.5]
        \draw [shift={(1.5,0)},line width=2pt]  plot[domain=0:3.141592653589793,variable=\t]({1*0.5*cos(\t r)+0*0.5*sin(\t r)},{0*0.5*cos(\t r)+1*0.5*sin(\t r)});
        \draw [shift={(3.5,0)},line width=2pt]  plot[domain=0:3.141592653589793,variable=\t]({1*0.5*cos(\t r)+0*0.5*sin(\t r)},{0*0.5*cos(\t r)+1*0.5*sin(\t r)});
            \begin{scriptsize}
                \draw [fill=ffffff] (1,0) circle (2.5pt);
                \draw [fill=ffffff] (2,0) circle (2.5pt);
                \draw [fill=ffffff] (3,0) circle (2.5pt);
            \draw [fill=ffffff] (4,0) circle (2.5pt);
            \end{scriptsize}
        \end{tikzpicture}} $\mapsto \langle i_1, i_2\rangle\langle i_3,i_4\rangle ,$ \hspace{0.5 cm} \raisebox{-.3\height}{\begin{tikzpicture}[line cap=round,line join=round,>=triangle 45,x=1cm,y=1cm, scale = 0.5]
        \draw [shift={(2,0)},line width=2pt]  plot[domain=0:3.141592653589793,variable=\t]({1*1*cos(\t r)+0*1*sin(\t r)},{0*1*cos(\t r)+1*1*sin(\t r)});
        \draw [shift={(3,0)},line width=2pt]  plot[domain=0:3.141592653589793,variable=\t]({1*1*cos(\t r)+0*1*sin(\t r)},{0*1*cos(\t r)+1*1*sin(\t r)});
        \begin{scriptsize}
        \draw [fill=ffffff] (1,0) circle (2.5pt);
        \draw [fill=ffffff] (2,0) circle (2.5pt);
        \draw [fill=ffffff] (3,0) circle (2.5pt);
        \draw [fill=ffffff] (4,0) circle (2.5pt);
        \end{scriptsize}
        \end{tikzpicture}} $\mapsto -\langle i_1, i_3\rangle \langle i_2, i_4\rangle$, \hspace{0.5 cm} \raisebox{-.3\height}{\begin{tikzpicture}[line cap=round,line join=round,>=triangle 45,x=1cm,y=1cm, scale = 0.5]
        \draw [shift={(2.5,0)},line width=2pt]  plot[domain=0:3.141592653589793,variable=\t]({1*1.5*cos(\t r)+0*1.5*sin(\t r)},{0*1.5*cos(\t r)+1*1.5*sin(\t r)});
        \draw [shift={(2.5,0)},line width=2pt]  plot[domain=0:3.141592653589793,variable=\t]({1*0.5*cos(\t r)+0*0.5*sin(\t r)},{0*0.5*cos(\t r)+1*0.5*sin(\t r)});
        \begin{scriptsize}
        \draw [fill=ffffff] (1,0) circle (2.5pt);
        \draw [fill=ffffff] (2,0) circle (2.5pt);
        \draw [fill=ffffff] (3,0) circle (2.5pt);
        \draw [fill=ffffff] (4,0) circle (2.5pt);
        \end{scriptsize}
        \end{tikzpicture}} $\mapsto \langle i_1, i_4\rangle\langle i_2, i_3\rangle. $
    \end{center}
    Then the total contraction number $T_{a_1, ..., a_r}^{m_1, ..., m_r}(i_1,..., i_{2t})$ is the sum of the associated numbers over all the possible graphs with $2t$ indices (where there are $(2t-1)!!$ choices). 
\end{rema}

\begin{prop}
For every positive integer $t$,
\begin{align*}
    \frac{\Delta(x)^t}{t!} a_1(-m_1&-1/2)\cdots a_r(-m_r-1/2)\one\\
    =\sum_{1\leq i_1 < \cdots < i_{2t} \leq r}&(-1)^{i_1+\cdots + i_{2t}}x^{-m_{i_1} - \cdots - m_{i_t} - t} T_{a_1,...,a_r}^{m_1,...,m_r}(i_1, ..., i_{2t})
    \\& \cdot a_1(-m_1-1/2)\cdots \reallywidehat{a_{i_1}(-m_{i_1}-1/2)}\cdots \reallywidehat{a_{i_{2t}}(-m_{i_{2t}}-1/2)} \cdots a_{r}(-m_r-1/2)\one
\end{align*}
\end{prop}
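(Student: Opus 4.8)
The plan is to induct on $t$, using the single-application formula of Proposition \ref{Delta(x)-Properties}(2) together with the symmetric recursion for the total contraction number in Proposition \ref{Contraction-Number-t-recursion}. The base case $t=1$ is precisely Proposition \ref{Delta(x)-Properties}(2): after identifying $b_p$ with $a_{i_p}$ and recalling $T_{a_1,...,a_r}^{m_1,...,m_r}(i_1,i_2)=(a_{i_1},a_{i_2})C_{m_{i_1}m_{i_2}}$, the two expressions coincide term by term (with $p=i_1$, $q=i_2$). I note in passing that the exponent of $x$ in the statement should read $-m_{i_1}-\cdots-m_{i_{2t}}-t$, summed over all $2t$ contracted indices.

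For the inductive step I would write $\frac{\Delta(x)^t}{t!}=\frac1t\,\Delta(x)\cdot\frac{\Delta(x)^{t-1}}{(t-1)!}$, apply the induction hypothesis to $\frac{\Delta(x)^{t-1}}{(t-1)!}\,a_1(-m_1-1/2)\cdots a_r(-m_r-1/2)\one$, and then apply $\Delta(x)$ to each resulting term via Proposition \ref{Delta(x)-Properties}(2). Each induction-hypothesis term is a scalar multiple of $a_{c_1}(-m_{c_1}-1/2)\cdots\one$, where $c_1<\cdots<c_{r-2t+2}$ is the complement of a chosen increasing sequence $i_1<\cdots<i_{2t-2}$; applying $\Delta(x)$ contracts two further indices $\alpha<\beta$ among the $c$'s, yielding a fully contracted configuration on $2t$ indices.

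The heart of the argument is the sign bookkeeping, which is also where I expect the main difficulty. The signs $(-1)^{p+q}$ supplied by Proposition \ref{Delta(x)-Properties}(2) refer to the positions $p,q$ of $\alpha,\beta$ \emph{within the reduced product}, not to their original positions. I would convert them by noting that the position of $\alpha$ in the reduced product is $\alpha$ minus the number of first-stage-removed indices below it, so that $(-1)^{p+q}=(-1)^{\alpha+\beta}(-1)^{N(\alpha)+N(\beta)}$, where $N(\cdot)$ counts removed indices below. Writing the merged final set as $i'_1<\cdots<i'_{2t}$ and letting $\alpha=i'_a$, $\beta=i'_b$, one finds $N(\alpha)=a-1$ and $N(\beta)=b-2$, hence $(-1)^{N(\alpha)+N(\beta)}=(-1)^{a+b-1}$. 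Combining with the induction-hypothesis sign $(-1)^{i_1+\cdots+i_{2t-2}}$ and using $\sum_{\ell\neq a,b}i'_\ell+i'_a+i'_b=\sum_\ell i'_\ell$, the total index sign collapses to $(-1)^{i'_1+\cdots+i'_{2t}}$, while the $x$-powers accumulate to $x^{-m_{i'_1}-\cdots-m_{i'_{2t}}-t}$.

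Finally I would regroup the double sum by the final $2t$-element set: the pairs (increasing $(2t-2)$-subset, increasing pair in its complement) are in bijection with pairs (final $2t$-set, distinguished pair $(a,b)$ within it), and for fixed last pair the first-stage contribution is exactly the total contraction number on the remaining $2t-2$ indices. What survives for each final set is $\frac1t\sum_{1\le a<b\le 2t}(-1)^{a+b-1}(a_{i'_a},a_{i'_b})C_{m_{i'_a}m_{i'_b}}\,T_{a_1,...,a_r}^{m_1,...,m_r}(i'_1,\dots,\widehat{i'_a},\dots,\widehat{i'_b},\dots,i'_{2t})$, which is precisely $T_{a_1,...,a_r}^{m_1,...,m_r}(i'_1,\dots,i'_{2t})$ by Proposition \ref{Contraction-Number-t-recursion}. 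This produces the claimed coefficient and closes the induction; the entire computation hinges on the residual sign being exactly $(-1)^{a+b-1}$, which is why the symmetric recursion, rather than the defining one, is the right tool.
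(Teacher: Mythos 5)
Your proof is correct and follows essentially the same route as the paper: induction on $t$, using Proposition \ref{Delta(x)-Properties}(2) for the base case and for the extra application of $\Delta(x)$ in the inductive step, then regrouping by the final $2t$-element index set so that the symmetric recursion of Proposition \ref{Contraction-Number-t-recursion} supplies the factor $t$ cancelling the $\frac{1}{t}$. Your explicit position-counting conversion of the signs $(-1)^{p+q}$ (positions in the reduced product) into $(-1)^{\alpha+\beta}(-1)^{a+b-1}$ is a cleaner rendering of the step the paper only sketches as ``a similar though more complicated process,'' and you correctly note the typo $-m_{i_t}$ for $-m_{i_{2t}}$ in the exponent of $x$ in the statement.
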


\begin{proof}
    We prove by induction. With Notation \ref{Index-Notation-1} and from Proposition \ref{Delta(x)-Properties}, 
    $$\Delta(x) S_a(i_1, ..., i_k) = \sum_{1\leq p < q \leq k} (-1)^{p+q}\langle i_p, i_q\rangle x^{-m_{i_p}-m_{i_q}-1}S_{a_1,... a_r}^{m_1, ..., m_r}(i_1, ..., \widehat{i_p}, ..., \widehat{i_q}, ..., i_k) $$
    So the base case is proved. Assume the conclusion holds for all smaller $t$, then
    \begin{align*}
        & \frac{\Delta(x)^{t-1}}{(t-1)!} S_{a_1,... a_r}^{m_1, ..., m_r}(1, ..., r)\\ 
        = \ & \sum_{1\leq i_1 < \cdots < i_{2t-2}\leq r} (-1)^{i_1+\cdots + i_{2t-2}} x^{-m_{i_1}-\cdots -m_{i_{2t-2}}-(t-1)} \\
        & \qquad \qquad \qquad  \cdot T_{a_1, ..., a_r}^{m_1, ..., m_r} (i_1, ..., i_{2t-2})  S_{a_1, ..., a_r}^{m_1, ..., m_r}(1, ..., \widehat{i_1}, ..., \widehat{i_{2t-2}}, ..., r)\\
        = \ & \sum_{1\leq i_1 < \cdots < i_{2t-2}\leq r} (-1)^{i_1+\cdots + i_{2t-2}} x^{-m_{i_1}-\cdots -m_{i_{2t-2}}-(t-1)} \\
        & \qquad \qquad \qquad  \cdot T_{a_1, ..., a_r}^{m_1, ..., m_r} (i_1, ..., i_{2t-2})  S_{a_1, ..., a_r}^{m_1, ..., m_r}(j_1, ..., j_{r-2t+2})
    \end{align*}
    Herr $1\leq j_1< ...< j_{r-2t+2}\leq r$ be the complement of $i_1, ..., i_{2t}$, i.e., 
    $$j_\alpha =\left\{\begin{array}{ll}
    \alpha & \text{ if }1 \leq \alpha \leq i_1-1\\
    \alpha+1 & \text{ if } i_1 \leq \alpha \leq i_2-2 \\
    \cdots & \cdots \\
    \alpha + \beta & \text{ if } i_\beta - \beta + 1 \leq p \leq i_{\beta+1} - (\beta+1)\\
    \cdots & \cdots \\
    \alpha + 2t-2 & \text{ if } i_{2t-2} -(2t-2)+ 1 \leq \alpha \leq r-(2t-2)
    \end{array}\right.$$
    Now we apply $\Delta(x)$, so as to get 
    \begin{align*}
        & \frac{\Delta(x)^{t}}{(t-1)!} S_{a_1,... a_r}^{m_1, ..., m_r}(1, ..., r)\\ 
        = \ & \sum_{1\leq i_1 < \cdots < i_{2t-2}\leq r} (-1)^{i_1+\cdots + i_{2t-2}} x^{-m_{i_1}-\cdots -m_{i_{2t-2}}-(t-1)} T_{a_1, ..., a_r}^{m_1, ..., m_r} (i_1, ..., i_{2t-2})\\
        & \qquad \qquad \qquad  \cdot \sum_{1\leq p < q \leq r-2t+2} (-1)^{p+q} \langle j_p, j_q\rangle x^{-m_{p}-m_q - 1} S_{a_1, ..., a_r}^{m_1, ..., m_r}(j_1, ..., \widehat{j_p}, ..., \widehat{j_q}, ..., , j_{r-2t+2})
    \end{align*}
    We would use the same idea from the proof of Proposition \ref{Contraction-Number-t-recursion} to interchange the order of summation. Note that for fixed $1 \leq i_1 < \cdots < i_{2t-2} \leq r$, 
    \begin{align*}
         \sum_{1\leq p< q \leq r-2t+2}= \ & \sum_{1\leq p< q \leq i_1 -1 } +  \sum_{\beta = 1}^{2t-3}\sum_{i_\beta - \beta +1 \leq p< q \leq i_{\beta+1} -(\beta+1) } +  \sum_{i_{2t-2}-(2t-2)+1\leq p<q\leq r-2t+2} \\
        & + \sum_{1\leq p \leq i_1-1} \sum_{\beta_2=1}^{2t-3}\sum_{i_{\beta_2}-\beta_2 + 1 \leq q \leq i_{\beta_2+1}-(\beta_2+1)} + \sum_{1\leq p \leq i_1-1} \sum_{i_{2t-2}-(2t-2) + 1 \leq q \leq i_{\beta_2+1}-(\beta_2+1)}\\
        & + \sum_{1\leq \beta_1 < \beta_2 \leq 2t-3} \sum_{i_{\beta_1}-\beta_1+1\leq p \leq i_{\beta_1+1}-(\beta_1+1)} \sum_{i_{\beta_2}-\beta_2+1\leq q \leq i_{\beta_2+1}-(\beta_2+1)} \\
        & + \sum_{\beta_1=1}^{2t-3} \sum_{i_{\beta_1}-\beta_1 + 1 \leq p \leq i_{\beta_1+1}-(\beta_1+1)} \sum_{i_{2t-2}-(2t-2)+1\leq q \leq r-2t+2}. 
    \end{align*} 
    By a similar though more complicated process, we will obtain that 
    \begin{align*}
        \frac{\Delta(x)^t}{(t-1)!} =\ & \sum_{1\leq i_1 < \cdots < i_{2t}\leq r}(-1)^{i_1 + \cdots + i_{2t}}x^{-m_{i_1}-\cdots - m_{i_{2t}}-t}S_{a_1, ..., a_r}^{m_1, ..., m_r}(1, ..., \widehat{i_1},..., \widehat{i_{2t}}, ..., r)\\
        & \qquad \qquad \qquad \cdot \sum_{1\leq \alpha < \beta \leq 2t}(-1)^{\alpha+\beta-1}\langle i_\alpha, i_\beta\rangle T_{a_1, ..., a_r}^{m_1, ...,m_r} (i_1, ..., \widehat{i_\alpha}, ..., \widehat{i_\beta}, ..., i_{2t}) \\
        = \ &  \sum_{1\leq i_1 < \cdots < i_{2t}\leq r}(-1)^{i_1 + \cdots + i_{2t}}x^{-m_{i_1}-\cdots - m_{i_{2t}}-t}S_{a_1, ..., a_r}^{m_1, ..., m_r}(1, ..., \widehat{i_1},..., \widehat{i_{2t}}, ..., r)\\
        & \qquad \qquad \qquad  \cdot t\cdot  T_{a_1, ..., a_r}^{m_1, ...,m_r} (i_1, ..., i_{2t}) 
    \end{align*}
    The conclusion then follows from dividing $t$ on both sides. 
\end{proof}

With these results, we have an expression of the $\exp(\Delta(x))$-action on every basis element of $V$. 
\begin{cor}
    For $a_1, ..., a_r\in \h, m_1, ..., m_r\in \N, $
    \begin{align*}
        & \exp(\Delta(x)) a_1(-m_1-1/2)\cdots a_r(-m_r-1/2)\one \\
        = \ & \sum_{t=0}^\infty \sum_{1\leq i_1 < \cdots < 2t \leq r} (-1)^{i_1 + \cdots + i_{2t}} T_{a_1, .. a_r}^{m_1, ..., m_r}(i_1, ..., i_{2t})\\
        & \qquad \qquad \qquad \cdot a_1(-m_1-1/2)\cdots \reallywidehat{a_{i_1}(-m_{i_1}-1/2)}\cdots \reallywidehat{a_{i_{2t}}(-m_{i_{2t}}-1/2)} \cdots a_{r}(-m_r-1/2)\one. 
    \end{align*}
\end{cor}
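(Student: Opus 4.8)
The plan is to read $\exp(\Delta(x))$ off from its definition as a formal series of operators and to apply the preceding Proposition one power at a time. Concretely, I would begin from
\[
\exp(\Delta(x)) = \sum_{t=0}^\infty \frac{\Delta(x)^t}{t!},
\]
regarded as an operator on $V$, and let it act termwise on the basis vector $a_1(-m_1-1/2)\cdots a_r(-m_r-1/2)\one$.

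Before assembling the terms I would verify that this termwise action is legitimate, that is, that $\exp(\Delta(x))$ is a genuinely well-defined operator on $V$ with no convergence subtlety. This is immediate from the preceding Proposition: its right-hand side sums over increasing sequences $1\leq i_1 < \cdots < i_{2t}\leq r$, which is vacuous as soon as $2t > r$. Thus $\Delta(x)^t$ annihilates the fixed basis vector for every $t > r/2$, so the sum over $t$ truncates and runs only over $0 \leq t \leq \lfloor r/2\rfloor$.

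It then remains to collect the contributions. The $t=0$ summand is $\tfrac{1}{0!}\Delta(x)^0$ applied to the vector, namely the vector itself; with the usual conventions (empty increasing sequence, empty total contraction number equal to $1$, and sign $(-1)^{0}=1$) this is exactly the $t=0$ term on the right-hand side, in which no factor is removed. For each $t\geq 1$ I would substitute the closed formula of the preceding Proposition for $\tfrac{1}{t!}\Delta(x)^t$ applied to the vector. Summing over all $t$ then produces precisely the asserted expression. I anticipate no real obstacle: all of the combinatorial work has already been absorbed into the preceding Proposition, and what is left is the bookkeeping of the $t=0$ base case together with the observation that the ostensibly infinite sum is finite.
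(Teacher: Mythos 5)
Your proposal is correct and matches the paper's own (implicit) proof: the corollary is treated there as an immediate consequence of the preceding Proposition, obtained exactly as you do by expanding $\exp(\Delta(x))=\sum_{t\geq 0}\Delta(x)^t/t!$, applying the Proposition termwise, and noting that the sum over $t$ truncates at $\lfloor r/2\rfloor$ because no increasing sequence $1\leq i_1<\cdots<i_{2t}\leq r$ exists once $2t>r$. One remark: a faithful substitution also carries the factor $x^{-m_{i_1}-\cdots-m_{i_{2t}}-t}$ from the Proposition, which the corollary as printed omits --- evidently a typo in the paper's statement rather than a flaw in your argument.
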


\subsection{Commutator of $\exp(\Delta(x))$ and $a^{(m)}(x)^-$}

\begin{prop}\label{Exp-Delta-Neg-Comm}
    For $a\in \h, m\in \N$, 
    \begin{align}
        [\exp(\Delta(y)), a^{(m)}(x)^-] &= \sum_{\alpha\geq 0}\sum_{\beta\geq 0}C_{\beta\alpha}a(\beta + 1/2)y^{-\beta-\alpha-1}(x^\alpha)^{(m)}\label{Exp-Delta-Neg-Comm-1} \exp(\Delta(y))
 %       &= \sum_{\beta\geq 0}a(\beta+1/2)\left(\iota_{yx}g_{m\beta}(y+x, y) - (x^{-\beta-1})^{(m)}\right)\exp(\Delta(y))\label{Exp-Delta-Neg-Comm-2}
    \end{align}
\end{prop}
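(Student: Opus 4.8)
The plan is to reduce the exponential commutator to the single commutator $[\Delta(y), a^{(m)}(x)^-]$, which is essentially computed in Proposition \ref{Delta(x)-Properties}(1), and then to exploit that this first commutator already commutes with $\Delta(y)$, so that all higher brackets vanish. Write
$$B = \sum_{\alpha\geq 0}\sum_{\beta\geq 0}C_{\beta\alpha}a(\beta+1/2)y^{-\beta-\alpha-1}(x^\alpha)^{(m)}$$
for the operator-valued series appearing on the right-hand side, stripped of the trailing $\exp(\Delta(y))$. First I would expand $a^{(m)}(x)^- = \sum_{\alpha\geq 0}a(-\alpha-1/2)(x^\alpha)^{(m)}$ and apply Proposition \ref{Delta(x)-Properties}(1) term by term, obtaining $[\Delta(y), a^{(m)}(x)^-] = B$; after renaming the summation index in Proposition \ref{Delta(x)-Properties}(1) the coefficients match the claimed prefactor verbatim.

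The key step is to show that $B$ commutes with $\Delta(y)$. Since $B$ is a sum of the positive modes $a(\beta+1/2)$ with $\beta\geq 0$ times scalar Laurent monomials in $x,y$, while $\Delta(y)$ is built from products of two positive modes $e_i(p+1/2)\bar e_i(q+1/2)$, this follows from Proposition \ref{pos-mode-prop}: moving $a(\beta+1/2)$ past each of the two positive modes in a summand of $\Delta(y)$ introduces two signs, and $(-1)^2 = 1$, so $[a(\beta+1/2), \Delta(y)] = 0$ for every positive mode. The evenness of $\Delta(y)$ in the number of modes is precisely what makes these signs cancel, and this is the crux of the whole computation.

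With $[\Delta(y), B] = 0$ in hand, I would prove by induction on $k$ that
$$\Delta(y)^k\, a^{(m)}(x)^- = a^{(m)}(x)^-\,\Delta(y)^k + k\,B\,\Delta(y)^{k-1}.$$
The base case is $[\Delta(y), a^{(m)}(x)^-] = B$, and the inductive step uses only the commutation of $B$ with $\Delta(y)$. Dividing by $k!$ and summing over $k\geq 0$ then collapses the two resulting series into $a^{(m)}(x)^-\exp(\Delta(y)) + B\exp(\Delta(y))$, which rearranges immediately to the desired identity. Throughout, the manipulations are justified vectorwise: by Proposition \ref{Delta(x)-Properties}(2) the operator $\Delta(y)$ lowers the number of modes by two, so $\Delta(y)^k$ annihilates any fixed basis element once $2k$ exceeds its length, and every series in play therefore truncates when applied to a vector.

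I expect the main obstacle to be the careful verification of $[\Delta(y), B] = 0$ directly from the definition of $\Delta(y)$, in particular the bookkeeping of the anti-commutation signs between $a(\beta+1/2)$ and the pair $e_i(p+1/2), \bar e_i(q+1/2)$. Once that commutation is established, the remaining inductive and summation steps are entirely formal.
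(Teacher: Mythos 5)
Your proposal is correct and follows essentially the same route as the paper's proof: both reduce to the single commutator $[\Delta(y), a^{(m)}(x)^-]$ via Proposition \ref{Delta(x)-Properties}(1), observe that this commutator commutes with $\Delta(y)$ because $\Delta(y)$ consists of \emph{pairs} of positive modes (so the anti-commutation signs cancel), establish $[\Delta(y)^k, \cdot\,] = k[\Delta(y), \cdot\,]\Delta(y)^{k-1}$ by induction, and sum the exponential series. The only differences are cosmetic --- you commute the whole series $a^{(m)}(x)^-$ at once rather than mode by mode, and you add a welcome remark on vectorwise truncation that the paper leaves implicit.
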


\begin{proof}
    Note that from Lemma \ref{Delta(x)-Properties}, $[\Delta(y), a(-n-1/2)]$ is a series consisting of positive modes that are anticommutative with positvie modes. Note also that $\Delta(y)$ itself is a product of two positive modes. Therefore, we see that $\Delta(y)$ commutes with $[\Delta(y), a(-n-1/2)]$, i.e., 
    $$\Delta(y)[\Delta(y), a(-n-1/2)] = [\Delta(y), a(-n-1/2)]\Delta(y)$$
    Clearly, since $\Delta(y)$ commutes with itself, for every $r\geq 1$, by an easy argument of induction, we have
    \begin{align*}
         [\Delta(y)^r, a(-n-1/2)] &= \Delta(y)[\Delta(y)^{r-1}, a(-n-1/2)] + [\Delta(y), a(-n-1/2)]\Delta(y)^{r-1}\\
        &  = r [\Delta(y), a(-n-1/2)]\Delta(y)^{r-1}. 
    \end{align*}
    So, 
    \begin{align*}
         \exp(\Delta(y)) a(-n-1/2) & = \sum_{r=0}^\infty \frac 1 {r!} \Delta(y)^r a(-n-1/2) \\
        & = \sum_{r=0}^\infty \frac 1 {r!}  \left(a(-n-1/2) \Delta(y)^r + r [\Delta(y), a(-n-1/2)] \Delta(y)^{r-1} \right)\\
        &  = a(-n-1/2) \exp(\Delta(y)) + [\Delta(y), a(-n-1/2)] \sum_{r=1}^\infty \frac 1 {(r-1)!} \Delta(y)^{r-1}\\
        &  = {\bigg (}a(-n-1/2) +[\Delta(y), a(-n-1/2)] {\bigg )}\exp(\Delta(y))
    \end{align*}
Therefore, 
\begin{align*}
    [\exp(\Delta(y)), a^{(m)}(x)^-] & =\sum_{\alpha\geq 0} \binom \alpha m  [\Delta(y), a(-\alpha-1/2)] x^{\alpha-m} \exp(\Delta(y))\\
    & =  \sum_{\alpha\geq 0} \binom{\alpha} m \sum_{\beta\geq 0} C_{\beta\alpha} a(\beta+1/2)y^{-\beta-\alpha-1}  x^{\alpha-m}\exp(\Delta(y))\\
    & =  \sum_{\alpha\geq 0} \sum_{\beta\geq 0} C_{\beta\alpha} a(\beta+1/2)y^{-\beta-\alpha-1}  (x^{\alpha})^{(m)}\exp(\Delta(y))
\end{align*}
So we proved (\ref{Exp-Delta-Neg-Comm-1}). % The equality with (\ref{Exp-Delta-Neg-Comm-2}) is seen from Formula (\ref{C_mn-g-formula}) that defines $C_{mn}$. 
\end{proof}

\noindent {\small \sc Department of Mathematics, Rutgers University, Piscataway, NJ  08854, USA}

\noindent {\em E-mail address}: fiorda@scarletmail.rutgers.edu,

\noindent {\small \sc Department of Mathematics, University of Denver, Denver, CO  80210, USA}

\noindent {\em E-mail address}: fei.qi@du.edu | fei.qi.math.phys@gmail.com

\end{document}